\newcommand{\NN}{\mathbb{N}}
\newcommand{\ZZ}{\mathbb{Z}}
\newcommand{\RR}{\mathbb{R}}
\newcommand{\NNo}{\mathbb{N}\cup\{0\}}
\newcommand{\act}{\curvearrowright}
\newcommand{\h}{\mathfrak{h}}
\newcommand{\G}{\mathcal{G}}
\newcommand{\R}{\mathfrak{R}}
\renewcommand{\H}{\mathcal{H}}
\newtheorem{thm}{Theorem}[section]
\newtheorem{cor}[thm]{Corollary}
\newtheorem{lem}[thm]{Lemma}
\newtheorem{prop}[thm]{Proposition}
\theoremstyle{definition}
\newtheorem{defn}[thm]{Definition}
\theoremstyle{remark}
\newtheorem{rem}[thm]{Remark}
\newtheorem{que}[thm]{Question}
\newcommand{\I}{\mathcal{I}}
\newcommand{\la}{\langle}
\newcommand{\ra}{\rangle}
\newcommand{\lab}{\mathbf{Lab}}
\newcommand{\supp}{\mathrm{supp}}
\newcommand{\aut}{\mathrm{Aut}}
\newcommand{\link}{\mathrm{link}}
\newcommand{\lk}{\mathrm{Lk}}
\newcommand{\deck}{\mathrm{Deck}}
\begin{document}

\title{Hyperfiniteness of the boundary action of virtually special groups}

\author{Koichi Oyakawa}
\date{}

\maketitle

\vspace{-10mm}

\begin{abstract}
    We prove that for any countable group acting virtually specially on a CAT(0) cube complex, the orbit equivalence relation induced by its action on the Roller boundary is hyperfinite. This can be considered as a generalization of hyperfiniteness of the boundary action of cubulated hyperbolic groups by Huang-Sabok-Shinko.
\end{abstract}

\section{Introduction}

Borel complexity measures how complicated it is to describe equivalence relations on standard Borel spaces. It provides a formulation of classification problems, and the study of Borel complexity has long been an active topic in descriptive set theory. Despite its long history, there are still many fundamental long standing open problems. One of them is the following (see \cite[Section 16.4]{Kec25}).
\begin{que}\label{que:hyperfinite}
    Is every measure-hyperfinite countable Borel equivalence relation hyperfinite?
\end{que}
Roughly speaking, a Borel equivalence relation $E$ on a standard Borel space $X$ is called hyperfinite if $E$ can be approximated by Borel equivalence relations with finite classes and called measure-hyperfinite if $E$ is $\mu$-hyperfinite (i.e. $E$ becomes hyperfinite after removing some $\mu$-null set of $X$) for every Borel probability measure $\mu$ on $X$ (see Section \ref{sec:Descriptive set theory}). Measure-hyperfiniteness is weaker than hyperfiniteness by default and Question \ref{que:hyperfinite} asks whether these two notions are equivalent.

One of the main sources of measure-hyperfinite Borel equivalence relations is the orbit equivalence relations of topologically amenable actions by Connes-Feldman-Weiss Theorem (see \cite{CFW81} and \cite[Appendix A]{FKSV25}). Topological amenability of group actions is a far-reaching generalization of amenability of groups and has been extensively studied with its connection to group theory, geometry, ergodic theory, and operator algebras. It is known that various group actions on nonpositively curved spaces induce a topologically amenable action on a natural boundary of the space, even if the group itself is not amenable (see \cite{Ada94,Kai04, Oza06,Kid08,Ham09,Lec10,GN11,NS13,HH21,BGC22}). Therefore, it is interesting to ask whether such natural actions appearing in geometry induce hyperfinite orbit equivalence relations in the quest of Question \ref{que:hyperfinite}. This can also be considered as the investigation of a group action version of Weiss Conjecture.

A classical result in this direction is that the natural action of the free group $F_2$ on its Gromov boundary induces the hyperfinite orbit equivalence relation, which follows from the hyperfiniteness of tail equivalence relations by Dougherty-Jackson-Kechris (see \cite{DJK94}). A breakthrough was achieved in \cite{HSS20} by Huang-Sabok-Shinko, where they generalized this result to every cubulated hyperbolic group. Later, this was further generalized to every hyperbolic group by Marquis-Sabok in \cite{MS20} and led to extensive research on hyperfiniteness of the orbit equivalence relation on the Gromov boundary induced by natural group actions on hyperbolic spaces (see \cite{Mar19,PS21,Kar22,Oya24,KEOSS24,NV25,KOO26}).

The purpose of this paper is to open up another generalization of \cite{HSS20}, which is for group actions on CAT(0) cube complexes. Theorem \ref{thm:main} below is our main theorem (see Section \ref{sec:Nonpositively curved cube complexes} for relevant notions). We actually prove a more general result in Theorem \ref{thm:hyperfinite of virt special groups} without cocompactness. In Section \ref{sec:Cubulated hyperbolic group}, we verify that this is indeed a generalization of \cite{HSS20} (see Corollary \ref{cor:hyperfinite Roller bdry of cubulated hyp gp}). The point here is that \cite{HSS20} considers actions on the Gromov boundary, while Theorem \ref{thm:main} considers actions on the Roller boundary.
\begin{thm}\label{thm:main}
    Let $X$ be a CAT(0) cube complex and $G$ be a countable group acting virtually cocompact specially on $X$. Then, the orbit equivalence relation $E_G^{\partial_\R X}$ induced by the action of $G$ on the Roller boundary $\partial_\R X$ of $X$ is hyperfinite.
\end{thm}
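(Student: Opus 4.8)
The plan is to combine three ingredients: a reduction from the virtually special case to right-angled Artin groups (RAAGs); an induction on the defining graph of the RAAG via amalgam decompositions; and an analysis of the Roller boundary of a tree of cube complexes that isolates the non-hyperbolic behaviour into Bass--Serre trees and lower-complexity pieces. I will freely use the standard permanence properties of hyperfiniteness for countable Borel equivalence relations: it is preserved under Borel subequivalence relations, under Borel reductions, under countable increasing unions, under finite products, and under the induction construction from a subgroup; moreover a countable Borel equivalence relation is hyperfinite as soon as it admits a hyperfinite finite-index Borel subequivalence relation.

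First I would reduce to a clean situation. Since $G$ acts virtually specially, some finite-index $H\le G$ acts specially, and after a further finite-index passage we may assume $H$ acts freely with $Y:=X/H$ a special cube complex; as $E_H^{\partial_\R X}$ is a finite-index Borel subequivalence relation of $E_G^{\partial_\R X}$, it suffices to treat $H$, which I rename $G$. Via the Haglund--Wise local isometry $Y\to S_\Gamma$ into the Salvetti complex of the RAAG $A_\Gamma$ on the crossing graph $\Gamma$ of $Y$, one obtains an injection $G\inj A_\Gamma$ and a $G$-equivariant embedding of $X=\widetilde Y$ onto a convex subcomplex of $\widetilde{S_\Gamma}$. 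A convex subcomplex inclusion of CAT(0) cube complexes induces (via gate projection) a $G$-equivariant topological embedding of Roller compactifications $\overline X\inj\overline{\widetilde{S_\Gamma}}$ carrying $\partial_\R X$ into $\partial_\R\widetilde{S_\Gamma}$, so $E_G^{\partial_\R X}$ is Borel isomorphic to a subequivalence relation of $E_{A_\Gamma}^{\partial_\R\widetilde{S_\Gamma}}$. It therefore suffices to prove: for every finite simplicial graph $\Gamma$, the $A_\Gamma$-action on $\overline{\widetilde{S_\Gamma}}$ induces a hyperfinite orbit equivalence relation. (Passing from the boundary to the full compactification is harmless: the action on the vertex set $\widetilde{S_\Gamma}^{(0)}$ is a single orbit, hence hyperfinite, and boundary orbits remain in the boundary.)

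I would prove this by induction on $|V(\Gamma)|$, the case $|V(\Gamma)|\le 1$ being immediate. If $\Gamma=\Gamma_1*\Gamma_2$ is a nontrivial join, then $A_\Gamma=A_{\Gamma_1}\times A_{\Gamma_2}$ and $\widetilde{S_\Gamma}=\widetilde{S_{\Gamma_1}}\times\widetilde{S_{\Gamma_2}}$; the Roller compactification of a product is the product of the Roller compactifications (the two families of halfspaces cross and impose no mutual nesting constraint), so the orbit equivalence relation is the product of the two factor relations, hyperfinite by the inductive hypothesis and closure under finite products. If $\Gamma$ is not a join, it has no cone vertex, so for any vertex $v$ one has a nontrivial amalgam $A_\Gamma=A_{\mathrm{st}(v)}*_{A_{\mathrm{lk}(v)}}A_{\Gamma\setminus v}$ over the proper sub-RAAG $A_{\mathrm{lk}(v)}$, with $\mathrm{st}(v)$, $\mathrm{lk}(v)$, $\Gamma\setminus v$ all having strictly fewer vertices than $\Gamma$. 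Geometrically, $\widetilde{S_\Gamma}$ is a tree of cube complexes over the Bass--Serre tree $T$ of this splitting, with vertex spaces $A_\Gamma$-translates of $\widetilde{S_{\mathrm{st}(v)}}$ and $\widetilde{S_{\Gamma\setminus v}}$ and edge spaces translates of $\widetilde{S_{\mathrm{lk}(v)}}$, all convexly embedded.

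The remaining, and main, step is to show that hyperfiniteness of the Roller-compactification orbit equivalence relation is inherited by such a tree of cube complexes with convex edge spaces, granted it for the vertex (hence edge) spaces. I would use the $A_\Gamma$-equivariant projection $\pi\colon\overline{\widetilde{S_\Gamma}}\to T\cup\partial T$ to split the Roller boundary, Borel-ly and equivariantly, into the ``tree-direction'' part $\pi^{-1}(\partial T)$ and, over each vertex $\tilde u$ of $T$, the ``interior'' part of the Roller boundary of the vertex space at $\tilde u$ (those boundary ultrafilters pointing into no incident edge space). On the vertex-space parts the orbit equivalence relation is induced up from the stabilizers of the finitely many $A_\Gamma$-orbits of vertices of $T$, which are isomorphic to $A_{\mathrm{st}(v)}$ and $A_{\Gamma\setminus v}$ acting on their own cube complexes, so hyperfiniteness follows from the inductive hypothesis and closure under induction. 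On $\pi^{-1}(\partial T)$ one pushes the data forward to the unique rays of $T$ and uses that the orbit equivalence relation of any countable group acting on a simplicial tree's end space is hyperfinite — a tail-equivalence argument in the spirit of \cite{DJK94} — together with a fibrewise control of the bounded transverse data along the ray coming from the convexly embedded vertex and edge spaces; matching rays at deeper and deeper splitting vertices of $T$ assembles these into an increasing union of finite Borel equivalence relations and closes the induction. The hard part, I expect, is precisely this last step: arranging the decomposition of the boundary of the tree of cube complexes to be simultaneously Borel, $G$-equivariant, and compatible with an exhaustion by finite equivalence relations, while controlling the interaction of the tree direction with the unbounded vertex and edge spaces (the fibres of $\pi$ over $\partial T$). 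With Theorem \ref{thm:main} in hand, the cubulated hyperbolic case of \cite{HSS20} is recovered as Corollary \ref{cor:hyperfinite Roller bdry of cubulated hyp gp}, since such groups are virtually special.
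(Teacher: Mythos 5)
Your opening reduction is exactly the paper's: pass to a finite-index subgroup acting freely with special quotient, use the Haglund--Wise local isometry to the Salvetti complex, lift to a convex embedding $X\hookrightarrow X(\Gamma)$, and thereby reduce everything to hyperfiniteness of $E_{A(\Gamma)}^{\partial_\R X(\Gamma)}$. From there, however, your induction on $\Gamma$ via joins and amalgams rests on a step that is genuinely missing. In the non-join case you split $A(\Gamma)=A_{\mathrm{st}(v)}\ast_{A_{\mathrm{lk}(v)}}A_{\Gamma\setminus v}$ and propose to treat the part of the Roller boundary lying over $\partial T$ ($T$ the Bass--Serre tree) by pushing forward to $\partial T$ and controlling ``bounded transverse data'' along rays. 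But whenever $\mathrm{lk}(v)\neq\emptyset$ the edge group $A_{\mathrm{lk}(v)}$ is infinite, and the fibres of your projection over points of $\partial T$ are typically uncountable (for $\Gamma$ a $5$-cycle the edge group is $F_2$, and a fibre can contain a whole Roller boundary of its tree); the transverse data is therefore not bounded, the map to $\partial T$ is not countable-to-one, and no Lusin--Novikov-type or countable-to-one reduction (such as Lemma \ref{lem:weak Borel homo}) applies. What your plan actually requires is that an equivalence relation fibering over a hyperfinite one (the end relation on $\partial T$, which is indeed hyperfinite by results of the type in \cite{KEOSS24}) with hyperfinite fibrewise behaviour is itself hyperfinite -- an instance of the extension problem for hyperfiniteness, which is open in general. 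You flag this as ``the hard part,'' but it is precisely the mathematical content of the theorem, so the proposal does not constitute a proof. (The join/product case and the vertex-space part of your decomposition are fine in outline, modulo checking that the projection to $T\cup\partial T$ and your partition of the boundary are Borel and equivariant.)

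For comparison, the paper avoids any induction on $\Gamma$ and never meets the extension problem: a boundary point $\xi$ is encoded by the tuple, indexed by the labels $i$ with $|\H_i(o,\xi)|=\infty$, of the sequences of separating hyperplanes ordered by distance from a basepoint; the key step is that the diagonal action $A(\Gamma)\curvearrowright\H^\NN$ is \emph{smooth} (Lemma \ref{lem:A(Gamma) act H_N}, proved by gate projections between hyperplane carriers, with finiteness of $V(\Gamma)$ forcing the nested convex sets to stabilize), after which hyperfiniteness of tail equivalence (Theorem \ref{thm:DJK}, \cite{DJK94}) and Lemma \ref{lem:weak Borel homo} yield Proposition \ref{prop:hyperfinite raag}. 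If you want to salvage your route, you would need a genuinely new argument for the $\pi^{-1}(\partial T)$ piece with infinite edge groups; as written, that step fails.
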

Special cube complexes are a very important class of cube complexes introduced by Haglund-Wise in \cite{HW08}. It is a higher-dimensional generalization of graphs and its fundamental group has nice separability properties such as residual finiteness. This notion was a crucial piece of the solution of Virtual Haken Conjecture and Virtual Fibering Conjecture by Agol in \cite{Ian13}, where Agol proved that every cubulated hyperbolic group is virtually special. Here, it is worth mentioning that Guentner-Niblo showed in \cite{GN11} that if a countable group acts on a finite dimensional CAT(0) cube complex, then the induced action on the Roller compactification is topologically amenable if and only if every vertex stabilizer is amenable (see \cite[Section 4]{GN11} and \cite{NS13}).

As for the proof of Theorem \ref{thm:main}, the crucial difference from all previous works on the Borel complexity of boundary actions is that geodesic rays converging to a common boundary point do not satisfy the fellow traveling property. This makes it impossible to apply previous methods that construct a Borel reduction from the orbit equivalence relation on the boundary to a tail equivalence relation by using edge labels. This difference can also be seen from the fact that virtually special groups can be amenable groups, for which hyperfiniteness of Borel actions is usually proved by a different approach. This lack of both hyperbolicity and amenability is the source of difficulty in proving Theorem \ref{thm:main}. A novel idea to tackle this difficulty is to use coloring of hyperplanes in the universal cover of a special cube complex, which reduces to the study of right angled Artin groups in Section \ref{sec:Hyperfiniteness in the case of right angled Artin groups}, and to construct a Borel reduction to product of tail equivalence relations.

Theorem \ref{thm:main} (or Theorem \ref{thm:hyperfinite of virt special groups}) also implies the following new results on the action on the Gromov boundary of hyperbolic graphs associated to CAT(0) cube complexes and also the boundary action of Coxeter groups.

\begin{thm}\label{thm:intro contact graph}
    Let $X$ be a CAT(0) cube complex and $G$ be a countable group acting virtually cocompact specially on $X$. Let $\mathscr{C} X$ be the contact graph of $X$. Then, the orbit equivalence relation $E_G^{\partial_\infty \mathscr{C} X}$ induced by the action of $G$ on the Gromov boundary $\partial_\infty \mathscr{C} X$ of $\mathscr{C} X$ is hyperfinite.
\end{thm}

\begin{thm}\label{thm:intro RAAG}
    Let $\Gamma$ be a finite simple graph, $A(\Gamma)$ be the right angled Artin group of $\Gamma$, and $\Gamma^e$ be the extension graph of $\Gamma$. Then, the orbit equivalence relation $E_{A(\Gamma)}^{\partial_\infty\Gamma^e}$ induced by the action of $A(\Gamma)$ on the Gromov boundary $\partial_\infty \Gamma^e$ of $\Gamma^e$ is hyperfinite.
\end{thm}

\begin{thm}\label{thm:intro Coxeter groups}
    Let $G$ be a finitely generated Coxeter group and let $C$ be the Niblo-Reeves CAT(0) cube complex on which $G$ acts properly. Then, the orbit equivalence relation $E_G^{\partial_\R C}$ induced by the action of $G$ on the Roller boundary $\partial_\R C$ of $C$ is hyperfinite.
\end{thm}

This paper is organized as follows. In Section \ref{Preliminaries}, we discuss the necessary definitions and known results about descriptive set theory, cube complexes, and hyperbolic spaces. In Section \ref{sec:Hyperfiniteness in the case of right angled Artin groups}, we prove hyperfiniteness for a natural action of right angled Artin groups. In Section \ref{sec:Proof of main theorem}, we prove Theorem \ref{thm:main} using results in Section \ref{sec:Hyperfiniteness in the case of right angled Artin groups}. In Section \ref{sec:Cubulated hyperbolic group}, we verify that Theorem \ref{thm:main} is a generalization of \cite{HSS20}.

\section{Preliminaries}\label{Preliminaries}

In this section, we discuss the necessary definitions and known results. We start by introducing basic notation related to graphs, metric spaces, and groups.

\begin{defn}
     Let $X$ be a graph. For a path $\gamma=(v_0,\ldots,v_n)$ in $X$ without self-intersection, where $v_i$'s are vertices, we denote by $\gamma_-$ the initial vertex of $\gamma$ and by $\gamma_+$ the terminal vertex of $\gamma$ (i.e. $\gamma_-=v_0$ and $\gamma_+=v_n$). We denote by $\gamma_{[v_i,v_j]}$ the subpath of $\gamma$ from $v_i$ to $v_j$ with $i<j$. For a geodesic ray $\gamma = (v_0,v_1,\ldots)$ in $\Gamma$, we denote by $\gamma_{[v_i,\infty)}$ the subray from $v_i$ i.e. $\gamma_{[v_i,\infty)}=(v_i,v_{i+1},\ldots)$. A graph is called \emph{simple} if it has no loops or multiple edges.
\end{defn}

\begin{defn}
    Let $(X,d)$ be a metric space. For $A,B\subset X$, we define $d(A,B)=\inf_{x \in A,\,y\in B}d(x,y)$.
\end{defn}

\begin{defn}
    Let $G$ be a group. For $S \subset G$, we denote by $\la S \ra$ the subgroup of $G$ generated by $S$. Given a generating set $S$ of $G$ (i.e. $G=\la S \ra$), we define the \emph{Cayley graph of $G$ with respect to $S$}, denoted by $Cay(G,S)$, to be the simple labeled graph whose vertex set is $G$ and $g,h \in G$ are adjacent if and only if $h=gs$ with some $s \in S\cup S^{-1}$, where the edge from $g$ to $h$ is labeled by $s$. Given a path $\gamma$ in $Cay(G,S)$, we denote by $\lab(\gamma)$ the word in $S\cup S^{-1}$ obtained by concatenating the edge labels of $\gamma$.
\end{defn}

\begin{defn}
    An action of a group $G$ on a set $X$ is called \emph{free} if for any $x \in X$, $\{g \in G \mid gx = x\} = \{1\}$. Given sets $X,Y$, a map $f \colon X \to Y$ is called \emph{countable-to-1} (resp. \emph{finite-to-1}) if for any $y \in Y$, $f^{-1}(y)$ is countable (resp. finite). For a subset $A \subset X\times Y$ and $x \in X$, we define $A_x \subset Y$ by $A_x=\{y \in Y \mid (x,y) \in A\}$.
\end{defn}

\subsection{Descriptive set theory}\label{sec:Descriptive set theory}

In this section, we review descriptive set theory. See \cite{Kec95,Anu22,Kec25} for further details on descriptive set theory and Borel equivalence relations.

\begin{defn}\label{def:eq ref}
    Let $X$ be a set. An \emph{equivalence relation on} $X$ is a reflexive, symmetric, and transitive subset of $X^2$. Let $E$ be an equivalence relation on $X$. For $x,y\in X$, we denote $x\, E \,y$ when $(x,y)\in E$. For $A \subset X$, we define an equivalence relation $E|_A$ on $A$ by $E|_A=E \cap(A\times A)$. An equivalence relation $E$ is called \emph{countable} (resp. \emph{finite}) if for every $x \in X$, the set $\{y \in X \mid x\, E \, y\}$ is countable (resp. finite).
\end{defn}

\begin{defn}
    A \emph{Polish space} is a topological space that is separable and completely metrizable. A measurable space $(X,\mathcal{B})$ is called a \emph{standard Borel space} if there exists a Polish topology $\mathcal{O}$ on $X$ such that $\mathcal{B}$ is the smallest $\sigma$-algebra containing $\mathcal{O}$. 
\end{defn}

\begin{rem}\label{rem:union of sbs}
    We will often shorten ``Borel measurable" to ``Borel".
\end{rem}

\begin{defn}\label{def:Borel eq rel}
    Let $X$ be a standard Borel space. An equivalence relation $E$ on $X$ is called \emph{Borel} if $E\subset X^2$ is Borel in $X^2$. A Borel equivalence relation $E$ on $X$ is called 
    \begin{itemize}
        \item[-]
        \emph{smooth} if there exists a standard Borel space $Y$ and a Borel map $f \colon X \to Y$ such that for any $x,y \in X$, $x\, E \, y \iff f(x) = f(y)$.
        \item[-]
        \emph{hypersmooth} if there exist smooth Borel equivalence relations $\{E_n\}_{n \in \NN}$ on $X$ such that $E = \bigcup_{n \in \NN}E_n$ and $\forall\,n \in \NN,\,E_n \subset E_{n+1}$.
        \item[-]
        \emph{hyperfinite} if there exist finite Borel equivalence relations $\{E_n\}_{n \in \NN}$ on $X$ such that $E = \bigcup_{n \in \NN}E_n$ and $\forall\,n \in \NN,\,E_n \subset E_{n+1}$.
    \end{itemize} 
\end{defn}

\begin{rem}
    We will often abbreviate ``countable Borel equivalence relation" to CBER.
\end{rem}

Definition \ref{def:orbit equivalence} and Definition \ref{def:tail equivalence} provide two important examples of CBERs in this paper.

\begin{defn}\label{def:orbit equivalence}
     Suppose that a group $G$ acts on a set $X$. The equivalence relation $E_G^X$ on $X$ is defined as follows; for $x,y\in X$,
     \[
        (x,y) \in E_G^X \iff \exists\, g\in G,\, gx=y.
     \]
     $E_G^X$ is called the \emph{orbit equivalence relation} on $X$.
\end{defn}

\begin{rem}\label{rem:orbit equivalence is cber}
    If a countable group $G$ acts on a standard Borel space $X$ as Borel isomorphism, then $E_G^X$ is a CBER.
\end{rem}

\begin{defn}\label{def:tail equivalence}
        Let $X$ be a set. The equivalence relation $E_t(X)$ on $X^\NN$ is defined as follows; for $w_0=(s_1,s_2,\ldots), w_1=(t_1,t_2,\ldots) \in X^\NN$,
    \[
    (w_0,w_1)\in E_t(X) \iff \exists\, n, \exists\, m \in\NN\cup\{0\},\, \forall\, i\in\NN,\, s_{n+i}=t_{m+i}.
    \]
    $E_t(X)$ is called the \emph{tail equivalence relation} on $X^\NN$.
\end{defn}

We list some facts needed for the proof of Theorem \ref{thm:main}. Theorem \ref{thm:DJK} below is from \cite[Theorem 8.1]{DJK94}. Recall that any countable set $\Omega$ with the discrete topology is a Polish space. Also, for any Polish space $X$, the set $X^\NN$ endowed with the product topology is a Polish space.

\begin{thm}[cf. {\cite[Theorem 8.1]{DJK94}}]\label{thm:DJK}
For any standard Borel space $X$, the tail equivalence relation $E_t(X)$ on $X^\NN$ is a hypersmooth Borel equivalence relation. Moreover, for any countable set $\Omega$, the tail equivalence relation $E_t(\Omega)$ on $\Omega^\NN$ is a hyperfinite CBER.
\end{thm}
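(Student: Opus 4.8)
The plan is to first rephrase $E_t(X)$ in terms of the shift map $\sigma\colon X^\NN\to X^\NN$, $\sigma(w_1,w_2,w_3,\ldots)=(w_2,w_3,\ldots)$. Unwinding Definition~\ref{def:tail equivalence}, one sees that $(w,w')\in E_t(X)$ iff $\sigma^n(w)=\sigma^m(w')$ for some $n,m\in\NN\cup\{0\}$; equivalently, $E_t(X)$ is the smallest equivalence relation $E$ on $X^\NN$ for which $(w,\sigma(w))\in E$ for all $w$, i.e.\ the equivalence relation generated by the single Borel map $\sigma$. Let $E_0$ denote the relation of eventual equality, $(w,w')\in E_0\iff w_k=w'_k$ for all large $k$. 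Then $E_0=\bigcup_n P_n$ with $P_n=\{(w,w'):w_k=w'_k\text{ for all }k>n\}$, and the tail map $w\mapsto(w_{n+1},w_{n+2},\ldots)$ exhibits each $P_n$ as smooth, so $E_0$ is hypersmooth and $E_0\subseteq E_t(X)$.

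The crucial structural point is that $\sigma$ carries each $E_0$-class onto an $E_0$-class and induces a \emph{bijection} on the set of $E_0$-classes contained in a fixed $E_t(X)$-class; equivalently, $E_t(X)$ is obtained from the hypersmooth relation $E_0$ by adjoining a Borel transformation that acts on the $E_0$-classes like a $\ZZ$-action. (Injectivity modulo $E_0$ holds because $\sigma^{-1}(\{v\})=\{(x,v_1,v_2,\ldots):x\in X\}$ lies in a single $E_0$-class; surjectivity modulo $E_0$ is a direct check.) The remaining — and, I expect, the only genuinely difficult — step is to conclude that such a ``$\ZZ$-extension'' of a hypersmooth relation is hypersmooth: one interleaves the exhausting smooth filtration $(P_n)_n$ of $E_0$ with longer and longer finite segments of the $\sigma$-orbits, a marker/Rokhlin-style argument in the spirit of the proof that orbit equivalence relations of $\ZZ$-actions are hyperfinite. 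This is in essence the content of \cite[Theorem~8.1]{DJK94}, and I would refer there for the details. Two remarks explain why some such work is unavoidable: the naive filtration ``$w,w'$ have a common tail after deleting at most $n$ initial terms from each'' is \emph{not transitive}, and its transitive closure is already all of $E_t(X)$; and one cannot in general replace $\sigma$ by an honest Borel automorphism that is $E_0$-equivalent to it (this fails when $X$ is finite), so the ``automorphism modulo $E_0$'' formulation above is the correct one.

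For the second assertion, let $\Omega$ be countable. First, $E_t(\Omega)$ is a CBER: the forward $\sigma$-orbit of $w$ is the single sequence $(\sigma^n w)_n$, and each $\sigma^{-1}(\cdot)$ has only $|\Omega|$ elements, so each class $\bigcup_{n,m}\sigma^{-m}(\sigma^n w)$ is a countable union of countable sets. By the first part $E_t(\Omega)$ is hypersmooth, say $E_t(\Omega)=\bigcup_n E_n$ with $E_n$ smooth and $E_n\subseteq E_{n+1}$; since $E_n\subseteq E_t(\Omega)$, each $E_n$ is a smooth equivalence relation with countable classes. Such a relation is hyperfinite: enumerating the countable fibres of a Borel reduction witnessing smoothness via the Lusin--Novikov theorem, and then identifying only the first $m$ listed elements of each fibre, yields an exhausting increasing sequence of finite Borel subequivalence relations. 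Finally, $E_t(\Omega)$ is an increasing union of hyperfinite Borel equivalence relations, hence hyperfinite by the theorem of \cite{DJK94} to that effect (equivalently, one may quote directly that every hypersmooth CBER is hyperfinite). The main obstacle is therefore the hypersmoothness in the first part; granting it, the second assertion is routine.
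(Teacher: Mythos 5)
The paper does not actually prove this statement—it is quoted directly from \cite[Theorem 8.1]{DJK94}—and your argument rests on that same reference for the one genuinely hard step (hypersmoothness of $E_t(X)$), so you are taking essentially the same route as the paper. Your surrounding reductions are correct and consistent with how the paper uses these facts elsewhere: the shift reformulation, the hypersmoothness of eventual equality, the observation that the naive filtration fails transitivity, and the deduction of hyperfiniteness of $E_t(\Omega)$ for countable $\Omega$ from hypersmoothness via Lusin--Novikov and the fact that hypersmooth countable Borel equivalence relations are hyperfinite (the same \cite{DJK94} results the paper quotes in Corollary \ref{cor:E_n hyperfinite}).
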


\begin{prop}[{\cite[Proposition 1.3.(vii)]{JKL02}}]\label{prop:JKL}
    Let $X$ be a standard Borel space and $E,F$ be countable Borel equivalence relations on $X$. If $E\subset F$, $E$ is hyperfinite, and every $F$-equivalence class contains only finitely many $E$-classes, then $F$ is hyperfinite.
\end{prop}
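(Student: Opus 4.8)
This is a standard closure property of hyperfinite CBERs, and the plan is to reduce it to the case of \emph{index two} extensions, which is where the real work lies.

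First I would reduce to the case of uniform index. For $n\in\NN$ let $X_n=\{x\in X : [x]_F\text{ meets exactly }n\text{ distinct }E\text{-classes}\}$. Fixing Feldman--Moore presentations of $E$ and $F$ by countably many Borel bijections, the condition defining $X_n$ is obtained from countably many Borel relations by countable quantification, so each $X_n$ is Borel; it is clearly $F$-invariant, and by hypothesis $X=\bigsqcup_{n\ge 1}X_n$. Since hyperfiniteness is closed under countable disjoint unions over $F$-invariant Borel pieces --- given witnessing sequences $(E^n_k)_k$ for $F|_{X_n}$, the relations $E_k$ acting as $E^n_k$ on $X_n$ for $n\le k$ and as the identity on $X_n$ for $n>k$ are finite, Borel, increasing, with union $F$ --- it suffices to treat each $F|_{X_n}$ separately. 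So from now on I may assume every $F$-class is a union of exactly $n$ distinct $E$-classes.

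Next I would induct on $n$. For $n=1$ we have $F=E$, so we are done. For $n\ge 2$, the goal is to produce a Borel equivalence relation $E'$ with $E\subseteq E'\subseteq F$ that is hyperfinite and for which every $F$-class is a union of fewer than $n$ distinct $E'$-classes; the induction hypothesis applied to $(E',F)$ then concludes. To construct $E'$: using a Feldman--Moore presentation $F=\bigcup_i\mathrm{graph}(g_i)$ by Borel involutions together with a marker/least-index argument, I would select, Borel-measurably, for each $x$ one $E$-class inside $[x]_F$ other than $[x]_E$ along with a Borel bijection of $[x]_E$ onto it, arranged so that inside each $F$-class the $n$ constituent $E$-classes get grouped into blocks of size at most two. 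Letting $E'$ be generated by $E$ together with these bijections, its classes are exactly the unions of the $E$-classes in each block, so $F$ has at most $\lceil n/2\rceil<n$ distinct $E'$-classes, and each $E'$-class is a union of at most two $E$-classes glued along a Borel isomorphism between Borel complete sections.

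The one substantial point --- and the main obstacle --- is therefore the \emph{index-two case}: if $E$ is hyperfinite and $E'\supseteq E$ is a CBER with each $E'$-class a union of at most two $E$-classes, then $E'$ is hyperfinite. I expect to prove this by an explicit marker-lemma construction of the approximating finite Borel equivalence relations (partitioning each class of the hyperfinite base into finite intervals compatible with the gluing, as in the elementary verification that eventual agreement on $2^{\NN}$ is hyperfinite), or equivalently by exhibiting a Borel reduction of $E'$ to eventual agreement on $2^{\NN}$ and invoking the Dougherty--Jackson--Kechris characterization of hyperfiniteness. Both the Borel selection in the inductive step and this index-two lemma genuinely require marker techniques, since one cannot in general distinguish Borel-measurably among the finitely many $E$-classes sitting inside a single $F$-class; the remaining combinatorics (reduction to uniform index, the induction scheme) is soft.
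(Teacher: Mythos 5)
The paper itself does not prove this proposition (it is quoted from \cite{JKL02}), so your argument has to stand entirely on its own, and it does not: the inductive step is not merely unproven but impossible. Your reduction to uniform index $n$ is fine, and the index-two lemma you isolate is true (it is a special case of the proposition), though you only sketch it. The fatal point is the intermediate relation $E'$ you want for $n\ge 3$: a Borel equivalence relation with $E\subseteq E'\subseteq F$, every $E'$-class a union of at most two $E$-classes, and every $F$-class containing fewer than $n$ distinct $E'$-classes. Such an $E'$ need not exist, so no marker or least-index argument can produce it. Take $X=\{0,1,2\}^{\NN}$, let $E$ be the relation of eventual agreement, let $\sigma$ add $1$ modulo $3$ to every coordinate, and let $F$ be generated by $E$ and $\sigma$. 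Since $\sigma$ commutes with $E$ and $\sigma x$, $\sigma^2 x$ differ from $x$ in every coordinate, each $F$-class consists of exactly three $E$-classes, and $E$ is hyperfinite, so this is an instance of the proposition with $n=3$. If $E'$ were as above, then inside each $F$-class the three $E$-classes would have to split as one pair plus one singleton, and the set $A=\{x \mid [x]_{E'}=[x]_{E}\}$ (Borel, by writing $E'$ as a countable union of graphs of Borel partial functions via Lusin--Novikov) would be an $E$-invariant Borel set meeting every $F$-class in exactly one of its three $E$-classes; hence $A$, $\sigma(A)$, $\sigma^2(A)$ partition $X$. The uniform product measure $\mu$ is $\sigma$-invariant, so $\mu(A)=1/3$, while $E$-invariance of $A$ and Kolmogorov's zero--one law force $\mu(A)\in\{0,1\}$ --- a contradiction.

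So the obstacle you yourself flag --- that one cannot Borel-measurably distinguish the finitely many $E$-classes inside a single $F$-class --- is not a technical nuisance to be overcome by markers; it is precisely what rules out any strategy that passes through an intermediate subequivalence relation gluing the classes two at a time (the same measure argument kills blocks of size at most two for every $n\ge 3$). A correct proof has to handle all $n$ classes of an $F$-class simultaneously rather than select a finer grouping; this is how the arguments in the literature proceed. In addition, the one step you call substantial, the index-two case, is only "expected" to follow from a marker construction or a reduction to eventual agreement, not actually carried out. As it stands, the proposal does not prove the proposition.
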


Theorem \ref{thm:Lusin-Novikov} below (see \cite[Theorem 18.10]{Kec95}) is used to show Lemma \ref{lem:hyperfininte when finite to 1}.

\begin{thm}[Lusin-Novikov]\label{thm:Lusin-Novikov}
    Let $X,Y$ be standard Borel spaces and let $P \subset X \times Y$ be Borel. If $P_x$ is countable for any $x \in X$, then there exists a Borel set $Q \subset P$ such that for any $x \in X$, $P_x \neq \emptyset \iff |Q_x|=1$.
\end{thm}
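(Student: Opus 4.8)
The plan is to deduce the stated selection from the stronger Luzin--Novikov decomposition: that $P=\bigsqcup_{n\in\NN}\mathrm{graph}(f_n)$ for partial Borel functions $f_n\colon D_n\to Y$ with Borel domains $D_n\subset X$ and pairwise disjoint graphs. Granting this, the selection itself is a routine ``least index'' extraction. Indeed, since the graphs cover $P$ we have $\bigcup_n D_n=\{x\in X:P_x\neq\emptyset\}$, and for such $x$ the value $n(x)=\min\{n:x\in D_n\}$ is a Borel function of $x$. Setting
\[
Q=\bigcup_{n\in\NN}\mathrm{graph}\bigl(f_n|_{D_n\setminus\bigcup_{m<n}D_m}\bigr),
\]
exhibits $Q$ as a countable union of Borel graphs, hence Borel; clearly $Q\subset P$, while $Q_x=\{f_{n(x)}(x)\}$ is a singleton exactly when $P_x\neq\emptyset$ and $Q_x=\emptyset$ otherwise. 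Thus the entire content lies in the decomposition.

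To prove the decomposition I would first normalize the ambient spaces. Since every standard Borel space is Borel isomorphic to a Borel subset of the Baire space $\mathcal N=\NN^{\NN}$, I may assume $Y=\mathcal N$; and by the change-of-topology theorem I may refine the Polish topology on $X$ (leaving its Borel structure, and all sections $P_x$, unchanged) so that $P$ becomes a \emph{closed} subset of $X\times\mathcal N$ with $X$ zero-dimensional Polish. Then each section $P_x$ is a closed, hence by hypothesis countable, subset of $\mathcal N$. The strategy is a fibrewise Cantor--Bendixson analysis: let $D(P)=\{(x,y)\in P:\ y\text{ is not isolated in }P_x\}$ be the fibrewise derivative, and iterate transfinitely by $D^{(0)}(P)=P$, $D^{(\alpha+1)}(P)=D(D^{(\alpha)}(P))$, and $D^{(\lambda)}(P)=\bigcap_{\alpha<\lambda}D^{(\alpha)}(P)$ at limits. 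Because each $P_x$ is countable, its Cantor--Bendixson rank is a countable ordinal, and the ``isolated layer'' $I^{(\alpha)}(P)=D^{(\alpha)}(P)\setminus D^{(\alpha+1)}(P)$ has only isolated points in each fibre; such a layer is covered by the countably many partial functions $g_s$ ($s\in\NN^{<\NN}$) sending $x$ to the unique point of its fibre lying in the basic clopen set $N_s$, whenever that point exists and is isolated. So each layer is a countable union of Borel graphs.

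The main obstacle is to make this transfinite process genuinely Borel and terminating \emph{uniformly} in $x$. A priori the derivative $D(P)$ is only analytic, since ``$y$ is a limit of $P_x$'' unpacks as $\forall k\,\exists y'\,(y'\in P_x,\ y'\neq y,\ d(y,y')<1/k)$, an existential quantifier over $\mathcal N$; and the naive iteration could run through uncountably many stages, which would wreck the count. Both difficulties are resolved by $\Sigma^1_1$-boundedness: the Borel map $x\mapsto P_x$ into the hyperspace of closed subsets of $\mathcal N$ has analytic image consisting of countable (hence well-founded) sets, so the Cantor--Bendixson rank, a $\Pi^1_1$-rank on well-founded trees, is bounded on this image by some $\theta<\omega_1$. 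Working below $\theta$, the transfinite sequence of derivatives is a genuinely countable family, which one checks is $\Delta^1_1$ uniformly (the cleanest bookkeeping being the effective refinement: treat $P$ as $\Pi^0_1(z)$ and verify the derivatives and isolated layers are $\Delta^1_1(z)$), and $D^{(\theta)}(P)=\emptyset$. Then $P=\bigsqcup_{\alpha<\theta}I^{(\alpha)}(P)$ is a countable union of isolated layers, each itself a countable union of Borel graphs, and reindexing yields $P=\bigsqcup_n\mathrm{graph}(f_n)$. In particular $\mathrm{proj}_X P=\bigcup_n D_n$ is Borel (the Luzin projection theorem), which is exactly what powers the least-index selection above.
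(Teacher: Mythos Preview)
The paper does not prove this statement: Theorem~\ref{thm:Lusin-Novikov} is quoted as a classical black-box with the citation ``see \cite[Theorem 18.10]{Kec95}'' and no argument is given. So there is no ``paper's own proof'' to compare against; the theorem is used later (in Lemma~\ref{lem:hyperfininte when finite to 1}) purely as an input.

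That said, your sketch is a recognisable outline of the standard proof one finds in Kechris: reduce the uniformization to the full Luzin--Novikov decomposition $P=\bigsqcup_n\mathrm{graph}(f_n)$, normalize to closed $P\subset X\times\NN^{\NN}$, run a fibrewise Cantor--Bendixson derivative, and invoke $\Sigma^1_1$-boundedness to bound the rank by some $\theta<\omega_1$ uniformly. The least-index extraction of $Q$ from the $f_n$'s is correct and routine. The one place your write-up is genuinely sketchy is the claim that the iterated derivatives $D^{(\alpha)}(P)$ and the isolated layers $I^{(\alpha)}(P)$ are Borel (equivalently $\Delta^1_1$) uniformly in $\alpha<\theta$; you gesture at ``the effective refinement'' but do not actually carry it out, and this is where the real work lies. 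In a full proof you would need either the effective argument (relativize to a parameter $z$ coding $P$, show each $D^{(\alpha)}(P)$ is $\Delta^1_1(z)$ by transfinite induction using a $\Delta^1_1(z)$ wellordering of length $\theta$) or the classical route via the Kuratowski--Ulam style uniformization of the derivative operation on $F(\NN^{\NN})$. As written this step is asserted rather than proved, but the overall strategy is sound and matches the textbook treatment the paper cites.
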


Theorem \ref{thm:Arsenin-Kunugui} below is used in this paper for the case when the section $A_x$ is a countable set, which is $K_\sigma$ (see \cite[Theorem 18.18, Theorem 35.43]{Kec95}, \cite[Corollary 13.7]{Anu22}).

\begin{thm}[Arsenin-Kunugui]\label{thm:Arsenin-Kunugui}
    Let $Y$ be a Polish space, $X$ a standard Borel space, and $A \subset X \times Y$ Borel such that $A_x$ is $K_\sigma$ (i.e. countable union of compact sets) in $Y$ for any $x \in X$. Then, the set $\mathrm{proj}_X(A)\,(=\{x \in X\mid \exists\,y \in Y,\,(x,y) \in A\})$ is Borel.
\end{thm}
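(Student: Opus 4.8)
The plan is to prove that the analytic set $P:=\mathrm{proj}_X(A)$ is in fact Borel. As a projection of the Borel set $A$, the set $P$ is automatically analytic, so the entire content is to produce a co-analytic description as well; once $P$ is seen to be both analytic and co-analytic, Suslin's theorem (\cite[Theorem 14.11]{Kec95}) gives that it is Borel. As a preliminary normalization for the general $K_\sigma$ case, I would first reduce to the situation where $Y$ is compact: fix a homeomorphism $\iota$ of the Polish space $Y$ onto a $G_\delta$ (hence Borel) subset of the Hilbert cube $[0,1]^\NN$, and transport $A$ to $A':=(\mathrm{id}_X\times\iota)(A)\subseteq X\times[0,1]^\NN$. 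Since $\mathrm{id}_X\times\iota$ is a homeomorphism onto $X\times\iota(Y)$, the set $A'$ is Borel, each section $A'_x=\iota(A_x)$ is still $K_\sigma$ (a homeomorphic image of a $\sigma$-compact set is $\sigma$-compact), and $\mathrm{proj}_X(A')=P$. Thus one may assume $Y$ is compact metrizable.

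For the case actually needed in this paper, where every section $A_x$ is countable, no reduction or ordinal analysis is required, and I would argue directly. By the Lusin--Novikov theorem in its decomposition form (\cite[Theorem 18.10]{Kec95}, of which Theorem \ref{thm:Lusin-Novikov} is the uniformization corollary), a Borel set with countable sections can be written as a countable union $A=\bigcup_{n\in\NN}A_n$ of Borel graphs of partial functions $X\rightharpoonup Y$; equivalently, each $A_n$ is a Borel set all of whose vertical sections have at most one element. For each fixed $n$, the restriction to the standard Borel space $A_n$ of the coordinate projection $X\times Y\to X$ is then an injective Borel map, so by the Lusin--Souslin theorem (\cite[Theorem 15.1]{Kec95}) its image $\mathrm{proj}_X(A_n)$ is Borel. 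Consequently $P=\mathrm{proj}_X(A)=\bigcup_{n\in\NN}\mathrm{proj}_X(A_n)$ is a countable union of Borel sets, hence Borel. (Alternatively, one can apply Theorem \ref{thm:Lusin-Novikov} to obtain a Borel $Q\subseteq A$ with $|Q_x|\le 1$ for all $x$ and $|Q_x|=1\iff A_x\neq\emptyset$; then $P=\mathrm{proj}_X(Q)$ is the image of $Q$ under an injective Borel map, and is Borel for the same reason.)

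For the full $K_\sigma$ statement the argument is genuinely harder, and this is the step I expect to be the main obstacle. With $Y$ compact, one must manufacture a co-analytic description of $P$ out of the $\sigma$-compactness of the sections alone. The route I would take is a transfinite rank analysis: attach to each section a derivative on closed subsets (or on a tree of shrinking basic open sets meeting $A_x$) whose stabilization at $\emptyset$ witnesses $\sigma$-compactness, so that the associated ordinal rank becomes a co-analytic ($\mathbf{\Pi}^1_1$) rank on $P$. Because every section is $\sigma$-compact, each such rank is a countable ordinal, and the $\mathbf{\Pi}^1_1$-boundedness theorem then bounds them uniformly, which is exactly what upgrades $P$ to a co-analytic (hence Borel) set. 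Equivalently, one can invoke the Saint-Raymond/Hurewicz dichotomy for Borel sets with $\sigma$-compact sections to construct a Borel uniformization of $A$ outright, exhibiting $P$ as the Borel domain of a Borel selector. The delicate point, and the one I would lean on the machinery of \cite[Section 35]{Kec95} for, is precisely the uniform control of these ranks across $x\in P$ guaranteeing that no section is ``too large''; since only the countable-section case is used in the present paper, the elementary argument of the previous paragraph already suffices for our applications.
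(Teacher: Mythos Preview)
The paper does not prove Theorem~\ref{thm:Arsenin-Kunugui}; it is quoted as a classical result with references to \cite[Theorem 18.18, Theorem 35.43]{Kec95} and \cite[Corollary 13.7]{Anu22}, so there is no ``paper's own proof'' to compare against. Your argument for the countable-section case is correct and standard: Lusin--Novikov decomposes $A$ into countably many Borel graphs, and Lusin--Souslin then shows each projects to a Borel set. This suffices for every application in the paper, as the text itself notes just before the statement.

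For the full $K_\sigma$ statement, your outline points at the right circle of ideas (reduction to compact $Y$, a $\mathbf{\Pi}^1_1$-rank coming from a Cantor--Bendixson-type derivative, and boundedness), but the sentence ``the $\mathbf{\Pi}^1_1$-boundedness theorem then bounds them uniformly, which is exactly what upgrades $P$ to a co-analytic (hence Borel) set'' is not quite the logic of the argument. Boundedness theorems are typically used to derive a contradiction from an assumed $\mathbf{\Sigma}^1_1$ description of an unbounded-rank set, not to manufacture a $\mathbf{\Pi}^1_1$ description directly; the actual proof in \cite[\S35]{Kec95} proceeds via the Saint~Raymond uniformization (building a Borel selector by overspill along the rank), and the Borelness of $P$ then falls out as the domain of that selector. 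Since you explicitly flag this as the hard step and defer to \cite[\S35]{Kec95}, and since only the countable-section case is used here, this does not affect the correctness of what the paper needs.
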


\subsection{Nonpositively curved cube complexes}\label{sec:Nonpositively curved cube complexes}

In this section, we review nonpositively curved cube complexes. We follow \cite{Wis12}. See \cite{Wis12,BSV14} for further details on cube complexes.

\begin{defn}
    For $n \in \NNo$, a $n$-\emph{cube} is $[0,1]^n$. (A $0$-cube is a singleton by definition.) Given a $n$-cube $\sigma$, a \emph{face} of $\sigma$ is a cube obtained by restricting some coordinates to $0$ or $1$, and a \emph{midcube} of $\sigma$ is a $(n-1)$-cube obtained by restricting exactly one coordinate to $\frac{1}{2}$. A \emph{cube complex} is a CW complex such that each cell is a $n$–cube for some $n$ and each cell is attached using an isometry of some face. Given a cube complex $X$, we denote by $X^{(n)}$ the set of all $n$-cubes in $X$ for each $n \in \NNo$ and define the $n$-\emph{skeleton} of $X$ to be the subcomplex formed by $\bigcup_{i=0}^n X^{(i)}$. The \emph{link} of a $0$-cube $v \in X^{(0)}$, denoted by $\link_X(v)$, is the simplex-complex whose $n$-simplices are corners of $(n+1)$-cubes adjacent with $v$. The \emph{dimension} of a cube complex $X$ is the least $N \in \NNo\cup\{\infty\}$ such that $X = \bigcup_{i=0}^N X^{(i)}$. We say $X$ is \emph{finite dimensional} if $N<\infty$. A cube complex $X$ is called \emph{nonpositively curved (NPC)} if the link of every $0$-cube is a flag complex. (Here, a \emph{flag complex} is a simplicial complex such that for every $n \in \NN$, $n+1$ vertices span a $n$-simplex if and only if they are pairwise adjacent). 
\end{defn}

\begin{defn}
    Let $X$ be a NPC cube complex. Define an equivalence relation $\sim$ on $X^{(1)}$ as follows; for $1$-cubes $e,f \in X^{(1)}$, we define $e \sim f$ if their exists a sequence $e_0,\ldots,e_n \in X^{(1)}$ with $e_0 = e$ and $e_n = f$ such that for any $i>0$, either we have $e_{i-1} = e_i$ or $e_{i-1}$ and $e_i$ are the opposite edges of some $2$-cube in $X$ (i.e. there exists a $2$-cube $\sigma \,(= [0,1]^2)$ such that $e_{i-1}$ and $e_i$ are the image of the opposite edges of $\sigma$ by a gluing map to $X$). Given $e \in X^{(1)}$, we denote by $[e]$ the equivalence class of $e$ by $\sim$. An \emph{immersed hyperplane dual to $e$} is the union of all the midcubes in $X$ intersecting the $1$-cubes in $[e]$. For an immersed hyperplane $h$ in $X$, we say that $e \in X^{(1)}$ is \emph{dual to} $h$ if $h \cap e \neq \emptyset$. Given immersed hyperplanes $h$ and $k$ in $X$, we say that
    \begin{itemize}
        \item[-]
        $h$ and $k$ \emph{cross} if there exist $1$-cubes $e$ dual to $h$ and $f$ dual to $k$ such that $e$ and $f$ span a $2$-cube in $X$ (i.e. there exists a $2$-cube $\sigma \,(= [0,1]^2)$ such that $e$ and $f$ are the image of $[0,1]\times \{0\}$ and $\{0\}\times [0,1]$, respectively, by a gluing map to $X$). If $h=k$ in addition, then we say $h$ \emph{self-crosses}.
        \item[-] 
        $h$ and $k$ \emph{osculate} if there exist $1$-cubes $e$ dual to $h$ and $f$ dual to $k$ such that $e \neq f$ and $e$ and $f$ share a $0$-cube and don't span a $2$-cube in $X$. If $h=k$ in addition, then we say $h$ \emph{self-osculates}.
        \item[-]
        $h$ and $k$ \emph{interosculate} if $h$ and $k$ cross and osculate.
        \item[-]
        $h$ is \emph{two-sided} if the $1$-cubes dual to $h$ can be oriented such that any two $1$-cubes dual to $h$ and lying in a common $2$-cube in $X$ are oriented in the same direction. If $h$ is not two-sided, then we say $h$ is \emph{one-sided}.
    \end{itemize}
    A NPC cube complex $X$ is called \emph{special} if $X$ satisfies all the four conditions below.
    \begin{itemize}
        \item[(1)]
        No immersed hyperplane in $X$ self-crosses.
        \item[(2)]
        No immersed hyperplane in $X$ is one-sided.
        \item[(3)]
        No immersed hyperplane in $X$ self-osculates.
        \item[(4)] 
        No two immersed hyperplanes interosculate.
    \end{itemize}
    For NPC cube complexes $X$ and $Y$, a combinatorial map $\psi \colon X \to Y$ is called a \emph{local isometry} if for any $v \in X^{(0)}$, the induced map $\psi_v \colon \link_X(v) \to \link_Y(\psi(v))$ is injective and for any vertices $x,y \in \link_X(v)$, $x$ and $y$ are adjacent in $\link_X(v)$ if and only if $\psi_v(x)$ and $\psi_v(y)$ are adjacent in $\link_Y(\psi(v))$.
\end{defn}

\begin{defn}\label{def:CAT(0) cube complex}
    A \emph{CAT(0) cube complex} is a simply connected NPC cube complex. An immersed hyperplane of a CAT(0) cube complex is called a \emph{hyperplane}.
Let $X$ be a CAT(0) cube complex. We denote by $d_X$ the metric defined by the $1$-skeleton of $X$ (also known as $\ell^1$-metric) and denote by $\H(X)$ the set of all hyperplanes of $X$ (i.e. $\H(X) \cong X^{(1)}/\sim$). A subset $C \subset X^{(0)}$ is called \emph{convex} if for any $x,y \in C$ and any geodesic $\gamma$ from $x$ to $y$ in the $1$-skeleton of $X$, we have $\gamma^{(0)} \subset C$. 
Given $h \in \H(X)$, $X\setminus h$ has exactly two connected components. These connected components, denoted by $h^-$ and $h^+$, are called \emph{halfspaces delimited by} $h$.     Given a $0$-cube $v \in X^{(0)}$, for every $h\in \H(X)$, there exists exactly one halfspace $\h_v$ such that $v \in \h_v$. This defines an element $\alpha_v \in \prod_{h \in \H(X)}\{h^-,h^+\}$ by $\alpha_v(h)=\h_v$ for any $h \in \H(X)$. The \emph{Roller compactification $\R X$ of} $X$ is defined by
    \begin{align*}
        \R X 
        = \overline{\{\alpha_v \mid v \in X^{(0)}\}} 
        \,\Big(= \big\{\alpha\in \prod_{h \in \H(X)}\{h^-,h^+\} \,\big|\, \forall\,h,k \in \H(X),\, \alpha
    (h)\cap \alpha(k) \neq \emptyset\big\}\Big),
    \end{align*}
    where each $\{h^-,h^+\}$ admits the discrete topology of two elements and $\prod_{h \in \H(X)}\{h^-,h^+\}$ admits the product topology. Since the map $X^{(0)}\ni v \mapsto \alpha_v \in \R X$ is injective, we consider $X^{(0)} \subset \R X$ by identifying $v$ and $\alpha_v$. The \emph{Roller boundary $\partial_\R X$ of} $X$ is defined by
    \begin{align*}
        \partial_\R X = \R X \setminus X^{(0)}.
    \end{align*}
    Define a graph $\G\subset (\R X)^2$ (as an anti-reflexive, symmetric subset) by
    \begin{align}\label{eq:Borel graph on RX}
        \G = \big\{(\alpha,\beta) \in (\R X)^2 \,\big|\, \#\{h \in \H(X)\mid \alpha(h) \neq \beta(h)\}=1 \big\}.
    \end{align}
    Given subsets $A,B \subset \R X \subset \prod_{h \in \H(X)}\{h^-,h^+\}$, we say that $h \in \H(X)$ \emph{separates $A$ and $B$} if for any $\xi\in A$ and $\eta \in B$, we have $\xi(h) \neq \eta(h)$.
\end{defn}

\begin{rem}\label{rem:restriction of Borel graph}
    The restriction of $\G$ to $X^{(0)}$ coincides with the $1$-skeleton of $X$.
\end{rem}

\begin{rem}\label{rem:distance of convex sets}
    Let $X$ be a CAT(0) cube complex. For any $x \in X^{(0)}$ and any convex subset $A \subset X^{(0)}$, there exists a unique $0$-cube $y \in A$ such that $d_X(x,y) = d_X(x,A)$. For any convex sets $A,B \subset X^{(0)}$, $d_X(A,B)$ is equal to the number of hyperplanes separating $A$ and $B$. Let $a,b \in X^{(0)}$ and let $p$ be a geodesic in $X$ from $a$ to $b$, then for any distinct $1$-cubes $e_0,e_1$ of $p$, the hyperplanes $\h_0$ and $\h_1$ dual to $e_0$ and $e_1$, respectively, are distinct.
\end{rem}

\begin{defn}\label{def:RAAG}
    For a finite simple graph $\Gamma$ (i.e. no loops or multiple edges), the \emph{right angled Artin group (RAAG) $A(\Gamma)$ of} $\Gamma$ is defined by the group presentation below
    \begin{align*}
        A(\Gamma) = \la V(\Gamma) \mid vw=wv,\,(v,w) \in E(\Gamma) \ra,
    \end{align*}
    where $V(\Gamma)$ is the vertex set and $E(\Gamma)$ is the edge set of $\Gamma$. We denote by $X(\Gamma)$ the Cayley graph of $A(\Gamma)$ with respect to $V(\Gamma)$. The \emph{Salvetti complex} $R(\Gamma)$ of $\Gamma$ is the cube complex defined as follows; start with a single vertex and glue a loop to it for each vertex of $\Gamma$. Finally, for each maximal complete subgraph $\Lambda$ of $\Gamma$ with $n=\#\Lambda \ge 2$, glue a $n$-torus along the loops associated to the vertices $V(\Lambda)$.
\end{defn}

\begin{rem}\label{rem:RAAG}
    The graph $X(\Gamma)$ becomes a CAT(0) cube complex by gluing cubes. The action $A(\Gamma) \act X(\Gamma)$ is free and the quotient $X(\Gamma)/A(\Gamma)$ is $R(X)$ (see Remark \ref{rem:Deck transformation}). For any hyperplane in $X(\Gamma)$, there exists a unique vertex $v \in V(\Gamma)$ such that all the edges in $X(\Gamma)$ dual to $h$ have the label $v$ or $v^{-1}$. We call $v$ the \emph{label of} $h$.
\end{rem}

Theorem \ref{thm:special iff raag} below is from \cite[Theorem 1.1]{HW08}.

\begin{thm}[{\cite[Theorem 4.4]{Wis12}}]\label{thm:special iff raag}
    Let $X$ be a NPC cube complex with finitely many immersed hyperplanes. Then, $X$ is special if and only if there exist a finite simple graph $\Gamma$ and a local isometry $\psi \colon X\to R(\Gamma)$.
\end{thm}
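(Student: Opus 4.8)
The plan is to prove the two implications separately; the direction ``$X$ special $\Rightarrow$ there is a local isometry to a Salvetti complex'' is the substantive one.

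For the converse, suppose a local isometry $\psi \colon X \to R(\Gamma)$ is given. First observe that $R(\Gamma)$ is itself special: its immersed hyperplanes are indexed by $V(\Gamma)$, there is exactly one $1$-cube dual to each of them, so none can self-cross or self-osculate and all are two-sided, and two distinct such hyperplanes cross precisely when they span the unique $2$-torus on the corresponding loops, so they never both cross and osculate. The next step is to show that a local isometry \emph{reflects} specialness. Since $\psi$ is combinatorial it sends each $2$-cube of $X$ onto a $2$-cube of $R(\Gamma)$, carrying opposite edges to opposite edges, hence maps each $\sim$-class of $1$-cubes of $X$ into a single $\sim$-class of $R(\Gamma)$ and so induces a well-defined map $h \mapsto \psi(h)$ from immersed hyperplanes of $X$ to immersed hyperplanes of $R(\Gamma)$. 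Now each of the four pathologies in $X$ — a self-crossing square, an obstruction to two-sidedly orienting the $1$-cubes dual to a hyperplane (pull back a coherent orientation along $\psi$), a self-osculating corner, an interosculating pair — is carried by $\psi$ to the same pathology for $\psi(h)$ in $R(\Gamma)$, using that $\psi$ maps squares to squares and that $\psi_v$ is injective and reflects adjacency on each link, so distinct edges or corners of $X$ stay distinct in $R(\Gamma)$ and non-spanning pairs stay non-spanning. Since $R(\Gamma)$ exhibits none of the four pathologies, neither does $X$.

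For the hard direction, assume $X$ is special. Since $X$ has only finitely many immersed hyperplanes, let $\Gamma$ be the finite simple graph whose vertex set is the set of immersed hyperplanes of $X$, with $h$ and $k$ adjacent if and only if $h$ and $k$ cross in $X$; this graph has no loops precisely because no hyperplane self-crosses, and no multiple edges by construction. Next build a combinatorial map $\psi \colon X \to R(\Gamma)$ as follows: send all of $X^{(0)}$ to the unique vertex of $R(\Gamma)$; using two-sidedness, fix for each hyperplane $h$ a coherent orientation of the $1$-cubes dual to $h$ and send each such $1$-cube to the loop of $R(\Gamma)$ labelled $h$, respecting the orientation; and for an $n$-cube $\sigma$ of $X$, observe that its $n$ dual hyperplanes are pairwise crossing and, by the absence of self-crossing, pairwise distinct, hence span a complete subgraph $\Lambda$ of $\Gamma$, so $R(\Gamma)$ contains the $n$-torus on the loops $V(\Lambda)$, into which $\sigma$ maps combinatorially. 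Checking that these assignments agree on common faces makes $\psi$ a well-defined combinatorial map.

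The main work — and the step I expect to be the principal obstacle — is to verify that $\psi$ is a local isometry, i.e. that for each $v \in X^{(0)}$ the induced map $\psi_v \colon \link_X(v) \to \link_{R(\Gamma)}(\psi(v))$ is injective and both preserves and reflects adjacency. A vertex of $\link_X(v)$ is an oriented half-edge at $v$, which $\psi_v$ records as a pair (dual hyperplane, side). Two half-edges at $v$ collide under $\psi_v$ only if they come from distinct $1$-cubes dual to the same hyperplane $h$, both incident to $v$; but two $1$-cubes dual to $h$ that span a square are opposite edges of it and so cannot share a vertex, so these two $1$-cubes do not span a square, which is a self-osculation of $h$ — excluded. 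Preservation of adjacency is immediate from the definition of $\Gamma$ and the cube structure of $R(\Gamma)$. Reflection of adjacency is where the no-interosculation hypothesis enters: if $\psi_v(x)$ and $\psi_v(y)$ are adjacent in $\link_{R(\Gamma)}(\psi(v))$, then the corresponding distinct hyperplanes $h,k$ cross somewhere in $X$, whereas if the $1$-cubes representing $x$ and $y$ at $v$ failed to span a square they would osculate at $v$, making $h,k$ an interosculating pair — excluded, so $x$ and $y$ are adjacent in $\link_X(v)$. Assembling these facts shows $\psi$ is a local isometry and completes the proof. Besides this link analysis, the parts requiring the most care are checking that the construction of $\psi$ is genuinely consistent across cubes of all dimensions and that the pathology transfer in the easy direction is carried out cleanly.
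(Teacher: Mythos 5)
The paper gives no proof of this statement---it is imported directly from \cite[Theorem 4.4]{Wis12}, and only the direction ``special $\Rightarrow$ local isometry'' is ever used later---so your proposal has to stand on its own. Your hard direction is exactly the standard construction (crossing graph $\Gamma$, edges sent to the loops of their dual hyperplanes via a chosen co-orientation, injectivity of $\psi_v$ from no self-crossing/self-osculation, fullness from no interosculation), and it is essentially right, with two points to tighten. First, in the injectivity step: two $1$-cubes dual to the same hyperplane that span a square are, by the paper's definition of ``span'', \emph{adjacent} sides of it, which is a self-crossing; they are not forced to be opposite sides, and opposite sides of a square can perfectly well share a vertex when corners of the square are identified. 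The conclusion you want still holds, but for that reason. Second, in the fullness step: ``$e$ and $f$ span a $2$-cube'' is an edge-level condition, so excluding osculation only yields a square in which $e,f$ are adjacent sides at \emph{some} common vertex, not necessarily a square realizing the corner at the particular vertex $v$ carrying $x$ and $y$ (this matters when $e,f$ share two vertices, or are loops). One needs the extra observation that in a special complex two distinct edges dual to the same hyperplane can never share a vertex at all (adjacency in a square would be a self-crossing, non-adjacency a self-osculation), together with two-sidedness to rule out squares with identified corners; with that the corner can be forced to sit at $v$.

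The genuine gap is in the converse direction. The claim that each pathology of $X$ ``is carried by $\psi$ to the same pathology for $\psi(h)$ in $R(\Gamma)$'' cannot work for (self-)osculation: every hyperplane of $R(\Gamma)$ has exactly one dual edge, so the images of two distinct dual edges of $h$ coincide and can never witness the pathology downstairs; and the fallback you invoke---injectivity and adjacency-reflection of $\psi_v$---only shows that the two half-edges at $v$ map to the two distinct, non-adjacent ends of a single loop of $R(\Gamma)$, which is no contradiction, since those ends really are non-adjacent in $\link_{R(\Gamma)}$. In fact, with the undirected notion of self-osculation adopted in this paper the implication you are proving is false: let $X=\mathbb{R}^2/\langle(1,1),(1,-1)\rangle$ with its unit-square cubulation (two vertices, four edges, two squares). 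The quotient map onto the one-vertex torus $R(\Gamma)$, with $\Gamma$ a single edge, is a cubical covering map and hence a local isometry, yet the hyperplane dual to the two vertical edges self-osculates in the paper's sense: those edges are distinct, share both endpoints, and are never adjacent sides of any square. (If one instead reads ``span a $2$-cube'' loosely as ``distinct sides of a common square that share a vertex'', the same example fails specialness through self-crossing, so no reading rescues the transfer argument.) The statement is correct for Haglund--Wise's finer notion, in which only \emph{direct} self-osculation (two distinct dual edges whose induced co-orientations share an initial, or a terminal, vertex) is forbidden; and with that definition the correct argument is not to push the pathology forward into $R(\Gamma)$ but to note that a direct self-osculation at $v$ produces two distinct vertices of $\link_X(v)$ with the same hyperplane label \emph{and the same side}, contradicting injectivity of $\psi_v$. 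As written, your easy direction argues for a statement that is false under the definitions in force, so this step needs to be reworked against the precise definition of self-osculation.
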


In the remarks below, we recall some facts about covering maps.

\begin{rem}\label{rem:Deck transformation}
    Let $X$ be a NPC cube complex, then the universal cover $\widetilde X$ of $X$ is a CAT(0) cube complex. Let $q_X \colon \widetilde X \to X$ be the covering map and let $\widetilde x \in \widetilde X^{(0)}$ and $x \in X^{(0)}$ satisfy $q_X(\widetilde x)= x$. We denote by $\deck(X)$ the set of all homeomorphisms $f \colon \widetilde X \to \widetilde X$ such that $q_X \circ f= q_X$. The fundamental group $\pi_1(X,x)$ of $X$ based at $x$ acts on $\widetilde X$ as follows; for $z \in \widetilde X$ and $g \in \pi_1(X,x)$, take a path $\beta$ in $\widetilde X$ from $z$ to $\widetilde x$ and a loop $\alpha$ in $X$ from $x$ to $x$ with $g=[\alpha]$. Take the lift $\gamma \subset \widetilde X$ from $z$ of the path $q_X(\beta)*\alpha*q_X(\beta)^{-1}$, where $*$ denote concatenation of paths. Then, $gz \in \widetilde X$ is the other endpoint of $\gamma$ different from $z$. This action $\pi_1(X,x) \act \widetilde X$ defines the group isomorphism $\pi_1(X,x) \to \deck(X)$. Conversely, let a group $G$ act on a CAT(0) cube complex $Y$ freely, then the quotient cube complex $Y/G$ is well-defined and NPC, and the quotient map $q \colon Y \to Y/G$ is a covering map. Hence, $Y$ is the universal cover of $Y/G$ and the action $G \act Y$ defines the group isomorphism $G \to \deck(Y/G)$.
\end{rem}

\begin{lem}[{\cite[Lemma 3.12]{Wis12}}]\label{lem:loc iso implies convex emb}
    Let $X,Y$ be NPC cube complexes and $\widetilde X, \widetilde Y$ be the universal cover of $X,Y$ respectively. Let $\psi \colon X \to Y$ be a local isometry and $\widetilde\psi \colon \widetilde X \to \widetilde Y$ be a lift of $\psi$. Then, the map $\widetilde \psi$ is an embedding as a convex subcomplex.
\end{lem}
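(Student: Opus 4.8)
The plan is to first construct the lift $\widetilde{\psi}$ and reduce the statement to a claim about CAT(0) cube complexes alone, and then to prove the combinatorial core of the matter: that a combinatorial local isometry between CAT(0) cube complexes sends geodesics to geodesics. Injectivity of $\widetilde{\psi}$ and convexity of its image will follow from that with routine bookkeeping.

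First I would observe that $\widetilde{\psi}$ exists and is itself a local isometry. Since $\widetilde{X}$ is simply connected, the composite $\psi\circ q_X\colon\widetilde{X}\to Y$ lifts along the covering $q_Y\colon\widetilde{Y}\to Y$ to a combinatorial map $\widetilde{\psi}$ with $q_Y\circ\widetilde{\psi}=\psi\circ q_X$. Being a local isometry is a condition on links, and the covering maps $q_X,q_Y$ restrict to isomorphisms of vertex links; hence at each $\widetilde{v}$ the link map $\widetilde{\psi}_{\widetilde{v}}$ equals $\psi_{q_X(\widetilde{v})}$ conjugated by those link isomorphisms, so it is injective and reflects adjacency. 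It therefore suffices to prove: a combinatorial local isometry $f\colon A\to B$ between CAT(0) cube complexes is injective with $f(A)$ a convex subcomplex. That $f(A)$ is a subcomplex is routine, since $f$ restricts to a combinatorial isomorphism on each cube.

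The main obstacle is showing $f$ sends every combinatorial geodesic of $A$ to a combinatorial geodesic of $B$; recall that a combinatorial path is a geodesic exactly when it crosses no hyperplane twice. Suppose not, and pick a geodesic $\gamma=(e_1,\dots,e_n)$ of $A$ and indices $i<j$ with $f(e_i),f(e_j)$ dual to a common hyperplane $h$ of $B$ and $j-i$ minimal over all such choices. The case $j=i+1$ is impossible: $f(e_i),f(e_{i+1})$ would be two edges through a common vertex dual to $h$, hence equal, forcing $e_i=e_{i+1}$ and a backtrack in $\gamma$. So $j\ge i+2$. By minimality the subpaths $f(e_i)\cdots f(e_{j-1})$ and $f(e_{i+1})\cdots f(e_j)$ are geodesics, and $h$ is crossed by neither except at an extreme edge; so the middle portion $f(e_{i+1})\cdots f(e_{j-1})$ is a geodesic whose endpoints, being endpoints of edges dual to $h$, lie in the carrier $N(h)\cong h\times[0,1]$. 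Since $N(h)$ is convex, a geodesic between those endpoints inside $N(h)$ cannot cross $h$, so it stays in $h\times\{1\}$ and crosses only hyperplanes transverse to $h$; as two geodesics between the same pair of vertices cross the same hyperplanes, every hyperplane dual to $f(e_k)$ with $i<k<j$ crosses $h$. In particular $f(e_i)$ and $f(e_{i+1})$ are distinct edges ($\gamma$ being a geodesic) through a common vertex with crossing dual hyperplanes, so they span a $2$-cube of $B$ (a standard property of CAT(0) cube complexes: two edges through a common vertex whose dual hyperplanes cross bound a square). Since the link map of $f$ at that vertex is injective, reflects adjacency, and links are flag, $e_i,e_{i+1}$ span a $2$-cube of $A$; the square move on $\gamma$ across it (swapping $e_i,e_{i+1}$ for the opposite two edges of that $2$-cube) produces a geodesic of $A$ with the same endpoints whose $f$-image still crosses $h$, now at an edge of index $i+1$ and at the unchanged edge $f(e_j)$, that is, with gap $j-i-1$ — contradicting minimality. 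Hence $f$ preserves geodesics; in particular $d_B(f(a),f(b))=d_A(a,b)$ for all vertices $a,b$, so $f$ is injective on vertices, hence on cubes.

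Finally I would show $C:=f(A^{(0)})$ is convex. Given $x,y\in C$ and a geodesic $\delta$ in $B^{(1)}$ from $x$ to $y$, choose preimages $x',y'\in A^{(0)}$ and a geodesic $\gamma_A$ of $A$ between them; then $f(\gamma_A)$ is a geodesic from $x$ to $y$ with all vertices in $C$. Since any two geodesics between the same two vertices of a CAT(0) cube complex are connected by a sequence of square moves, it is enough to see that a square move applied to a path with all vertices in $C$ again has all vertices in $C$: such a move replaces a subpath $u,v,w$ by $u,v_*,w$, where $u,v,w,v_*$ are the corners of a $2$-cube $s$ of $B$ with $u,w$ opposite. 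Writing $u=f(u'),v=f(v'),w=f(w')$, distance preservation makes $v'$ adjacent to $u'$ and to $w'$ in $A$, so $[u',v']$ and $[v',w']$ are edges of $A$ through $v'$ whose $f$-images span $s$; by the link condition they span a $2$-cube $s'$ of $A$, and $f(s')$ is a $2$-cube of $B$ with opposite corners $u,w$, hence $f(s')=s$, so $v_*=f(\text{fourth corner of }s')\in C$. Therefore $\delta$ lies in $C$, that is, $C$ is convex, and $\widetilde{\psi}$ embeds $\widetilde{X}$ onto the convex subcomplex $f(A)$. The only genuinely nontrivial step is the geodesic-preservation claim; everything else is bookkeeping with links, carriers, and square moves.
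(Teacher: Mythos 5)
Your proof is correct, and the comparison here is necessarily with the literature rather than with the paper: the paper gives no argument for Lemma \ref{lem:loc iso implies convex emb}, it simply quotes it from \cite[Lemma 3.12]{Wis12}. Wise's proof establishes the same key point as yours --- that the lift sends combinatorial geodesics to geodesics, equivalently that the image is (locally) convex --- but does so with minimal-area disc diagrams, using the link condition to rule out cornsquares/spurs along the image path. Your route replaces the diagram argument by hyperplane combinatorics and square moves: the minimal-gap counterexample, convexity of hyperplane carriers, the fact that in a CAT(0) cube complex two edges at a vertex with crossing dual hyperplanes span a square (this ``standard property'' is precisely the statement that CAT(0) cube complexes are special --- no self-osculation, no interosculation --- and deserves an explicit citation, e.g.\ \cite{HW08} or \cite{Wis12}), and, for convexity of the image, the square-move connectedness of geodesics with fixed endpoints together with the link condition pulling squares back. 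Since the facts you invoke are themselves usually proved by disc diagrams, the two arguments are morally equivalent; yours is somewhat more modular and fits well with the hyperplane language used elsewhere in this paper. Two small points of care: in the minimal counterexample the gap $j-i$ must be minimized over all geodesics of $A$ and all index pairs (not over index pairs of a fixed $\gamma$), since the square move changes the geodesic --- this is clearly your intended reading but should be said explicitly; and the steps ``$f(s')=s$'' and ``two edges at a vertex dual to the same hyperplane coincide'' both rest on the no-self-osculation property, which you may as well cite once alongside no-interosculation.
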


\begin{rem}\label{rem:maps between covering spaces}
    Let $X, Y, \psi, \widetilde X, \widetilde Y, \widetilde\psi$ be as in Lemma \ref{lem:loc iso implies convex emb} and let $q_X \colon \widetilde X \to X$ and $q_Y \colon \widetilde Y \to Y$ be the covering maps. Let $\widetilde x \in \widetilde X^{(0)}$, $\widetilde y \in \widetilde Y^{(0)}$, $x \in X^{(0)}$, and $y \in Y^{(0)}$ satisfy $\widetilde\psi(\widetilde x) = \widetilde y$, $\psi(x)=y$, $q_X(\widetilde x) = x$, and $q_Y(\widetilde y) = y$. Note $\psi \circ q_X = q_Y\circ\widetilde\psi$ since $\widetilde \psi$ is a lift of $\psi$. Let $\psi_* \colon \pi_1(X,x) \to \pi_1(Y,y)$ be the group homomorphism induced by $\psi$ (see Remark \ref{rem:Deck transformation}). The map $\psi_*$ is injective since $\psi$ is a local isometry. For any $g \in \pi_1(X,x)$ and $z \in \widetilde X$ we have
    \begin{align*}
        (\widetilde\psi \circ g)(z) = (\psi_*(g) \circ\widetilde\psi)(z).
    \end{align*}
\end{rem}

\begin{defn}\label{def:virtually special group}
    A cubical action of a group $G$ on a CAT(0) cube complex $X$ is called \emph{virtually cocompact special} if there exists a finite index subgroup $H$ of $G$ such that the action $H \act X$ is free and the quotient $X/H$ is a compact special cube complex. A group that admits a virtually cocompact special action on a CAT(0) cube complex is called \emph{virtually compact special}.
\end{defn}

\subsection{Hyperbolic spaces}

In this section, we review hyperbolic spaces. See \cite{BH} for more on hyperbolic spaces.

\begin{defn}
    Let $(S,d_S)$ be a metric space. For $x,y,z\in S$, we define $(x,y)_z^S$ by
\begin{align}\label{eq:gromov product}
    (x,y)_z^S=\frac{1}{2}\big( d_S(x,z)+d_S(y,z)-d_S(x,y) \big).    
\end{align}
\end{defn}

\begin{prop} \label{prop:hyp sp}
    For any geodesic metric space $(S,d_S)$, the following conditions are equivalent.
    \item[(1)]
    There exists $\delta\ge0$ satisfying the following property. Let $x,y,z\in S$, and let $p$ be a geodesic path from $z$ to $x$ and $q$ be a geodesic path from $z$ to $y$. If two points $a\in p$ and $b\in q$ satisfy $d_S(z,a)=d_S(z,b)\le (x,y)_z^S$, then we have $d_S(a,b) \le \delta$.
    \item[(2)] 
    There exists $\delta\ge0$ such that for any $w,x,y,z \in S$, we have
    \[
    (x,z)_w^S \ge \min\big\{(x,y)_w^S,(y,z)_w^S\big\} - \delta.
    \]
\end{prop}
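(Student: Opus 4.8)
The plan is to prove the two implications separately, in both cases reducing every Gromov product that appears to one involving points that actually lie on the given geodesics, where such products can be pinned down exactly. The single identity used throughout is: if $a$ lies on a geodesic from $z$ to $x$ with $d_S(z,a)=s$, then $d_S(a,x)=d_S(z,x)-s$, hence $(x,a)_z^S=s$; symmetrically $(y,b)_z^S=s$ when $b$ lies on a geodesic from $z$ to $y$ at distance $s$ from $z$. From here everything follows from the triangle inequality together with one or two invocations of the hypothesis.

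For (1) $\Rightarrow$ (2), fix $w,x,y,z\in S$. Both sides of the desired inequality are symmetric in $x$ and $z$, so I may assume $(y,z)_w^S\le (x,y)_w^S$ and put $t=(y,z)_w^S=\min\{(x,y)_w^S,(y,z)_w^S\}$. Since $t\le\min\{d_S(w,y),d_S(w,z)\}$ and $t\le (x,y)_w^S\le d_S(w,x)$ by the triangle inequality, I can choose points $a,b,c$ at distance $t$ from $w$ on geodesics from $w$ to $x$, from $w$ to $y$, and from $w$ to $z$ respectively. Applying (1) to the pair of geodesics from $w$ to $x$ and from $w$ to $y$ (allowed since $t\le (x,y)_w^S$) gives $d_S(a,b)\le\delta$, and applying it to the pair from $w$ to $y$ and from $w$ to $z$ (allowed since $t\le (y,z)_w^S=t$) gives $d_S(b,c)\le\delta$, so $d_S(a,c)\le 2\delta$. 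Then $d_S(x,z)\le d_S(x,a)+d_S(a,c)+d_S(c,z)\le (d_S(w,x)-t)+2\delta+(d_S(w,z)-t)$, and substituting into the definition of $(x,z)_w^S$ yields $(x,z)_w^S\ge t-\delta$. So (2) holds with the same constant $\delta$.

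For (2) $\Rightarrow$ (1), fix $x,y,z\in S$, a geodesic $p$ from $z$ to $x$, a geodesic $q$ from $z$ to $y$, and points $a\in p$, $b\in q$ with $d_S(z,a)=d_S(z,b)=s\le (x,y)_z^S$. By the identity above, $(x,a)_z^S=s$ and $(y,b)_z^S=s$. Now iterate the four-point condition: first $(x,b)_z^S\ge\min\{(x,y)_z^S,(y,b)_z^S\}-\delta\ge s-\delta$ (using $(x,y)_z^S\ge s$), and then $(a,b)_z^S\ge\min\{(x,a)_z^S,(x,b)_z^S\}-\delta\ge s-2\delta$. On the other hand, directly from the definition $(a,b)_z^S=s-\tfrac12 d_S(a,b)$ because $d_S(z,a)=d_S(z,b)=s$. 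Comparing the two inequalities gives $d_S(a,b)\le 4\delta$, so (1) holds with constant $4\delta$.

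I do not expect a serious obstacle in either direction. The (2) $\Rightarrow$ (1) direction carries what little ingenuity there is: noticing that Gromov products of points on the geodesics can be computed exactly, and that two successive applications of the four-point inequality (first bounding $(x,b)_z^S$, then $(a,b)_z^S$) are needed. The (1) $\Rightarrow$ (2) direction is then routine triangle-inequality bookkeeping; the only points requiring care are checking that the auxiliary points $a,b,c$ exist and that (1) is applied only to geodesics sharing the vertex $w$ with the equidistance parameter bounded by the relevant Gromov product.
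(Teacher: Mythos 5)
Your proof is correct. The paper does not actually prove Proposition \ref{prop:hyp sp}; it is stated as a standard fact about hyperbolic spaces with the details deferred to the reference [BH99], and your argument is precisely the standard one: for (1) $\Rightarrow$ (2) you place three points at distance $t=\min\{(x,y)_w^S,(y,z)_w^S\}$ along geodesics from $w$ and chain two applications of (1), and for (2) $\Rightarrow$ (1) you use the exact computation $(x,a)_z^S=d_S(z,a)$ for $a$ on a geodesic from $z$ to $x$ together with two applications of the four-point inequality, yielding $d_S(a,b)\le 4\delta$. The only (harmless) difference from the strictest reading of the statement is that the constants degrade ($\delta$ becomes $4\delta$ in one direction), which is all the equivalence of conditions (1) and (2) requires.
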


\begin{defn}
    A geodesic metric space $S$ is called \emph{hyperbolic}, if $S$ satisfies the equivalent conditions (1) and (2) in Proposition \ref{prop:hyp sp}. We call a hyperbolic space $\delta$-\emph{hyperbolic} with $\delta \ge0$, if $\delta$ satisfies both of (1) and (2) in Proposition \ref{prop:hyp sp}. A connected graph $\Gamma$ is called \emph{hyperbolic}, if the geodesic metric space $(\Gamma,d_\Gamma)$ is hyperbolic, where $d_\Gamma$ is the graph metric of $\Gamma$.
\end{defn}

In the remainder of this section, suppose that $(S,d_S)$ is a hyperbolic geodesic metric space.

\begin{defn}\label{def:seq to infty}
    A sequence $(x_n)_{n=1}^\infty$ of elements of $S$ is said to \emph{converge to infinity}, if we have $\lim_{i,j\to\infty} (x_i,x_j)_o^S =\infty$ for some (equivalently any) $o\in S$. For two sequences $(x_n)_{n=1}^\infty,(y_n)_{n=1}^\infty$ in $S$ converging to infinity, we define the relation $\sim$ by $(x_n)_{n=1}^\infty \sim (y_n)_{n=1}^\infty$ if we have $\lim_{i,j\to\infty} (x_i,y_j)_o^S =\infty$ for some (equivalently any) $o\in S$.
\end{defn}

\begin{rem}
    It's not difficult to see that the relation $\sim$ in Definition \ref{def:seq to infty} is an equivalence relation by using the condition (2) of Proposition \ref{prop:hyp sp}.
\end{rem}

\begin{defn}
    The quotient set $\partial_\infty S$ is defined by
    \[
    \partial_\infty S = \{ {\rm sequences~in~ }S {\rm ~converging~to~infinity} \} / \sim
    \]
    and called the \emph{Gromov boundary} of $S$.
\end{defn}

\begin{rem}
    The set $\partial_\infty S$ is sometimes called the sequential boundary of $S$. Note that $\partial_\infty S$ sometimes coincides with the geodesic boundary of $S$ (e.g. when $S$ is a proper metric space), but this is not the case in general.
\end{rem}

\begin{defn}
    For $o\in S$ and $\xi,\eta \in S\cup \partial_\infty S$, we define $(\xi,\eta)_o^S$ by
    \begin{equation}\label{eq:gromov prod for boundary}
        (\xi,\eta)_o^S=\sup\big\{ \liminf_{i,j\to\infty}(x_i,y_j)_o^S ~\big|~ \xi=[(x_n)_{n=1}^\infty], \eta=[(y_n)_{n=1}^\infty] \big\},
    \end{equation}
    where we define $\xi=[(x_n)_{n=1}^\infty]$ as follows. If $\xi \in \partial_\infty S$, then $(x_n)_{n=1}^\infty$ is a sequence in $S$ converging to infinity such that $\xi$ represents the equivalence class of $(x_n)_{n=1}^\infty$. If $\xi \in S$, then $(x_n)_{n=1}^\infty$ is constant with $x_n \equiv \xi$. We define $\eta=[(y_n)_{n=1}^\infty]$ in the same way.
\end{defn}

\begin{prop} \label{prop:gromov topo}
    For any hyperbolic geodesic metric space $(S,d_S)$, there exists a unique topology $\mathcal{O}_S$ on $S\cup\partial_\infty S$ such that the relative topology of $\mathcal{O}_S$ on $S$ coincides with the metric topology of $d_S$ and for any $\xi \in \partial_\infty S$ and $o\in S$, the sets $\{U(o,\xi,n)\}_{n=1}^\infty$ defined by
    \[
    U(o,\xi,n)=\{ \eta \in S\cup\partial_\infty S \mid (\eta,\xi)_o^S > n \}
    \]
    form a neighborhood basis of $\mathcal{O}_S$ at $\xi$.
\end{prop}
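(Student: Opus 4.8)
The plan is to build $\mathcal{O}_S$ by prescribing a neighborhood basis at every point and then invoking the standard criterion: if for each $x\in S\cup\partial_\infty S$ we are given a filter base $\mathcal{B}(x)$ of subsets each containing $x$, then there is a (necessarily unique) topology having $\mathcal{B}(x)$ as a neighborhood basis at $x$ for every $x$, provided that for each $x$ and each $U\in\mathcal{B}(x)$ there is $V\in\mathcal{B}(x)$ such that every $y\in V$ admits some $W\in\mathcal{B}(y)$ with $W\subset U$. I take $\mathcal{B}(x)=\{B(x,1/n):n\in\NN\}$ (open metric balls in $(S,d_S)$, regarded as subsets of $S\subset S\cup\partial_\infty S$) for $x\in S$, and $\mathcal{B}(\xi)=\{U(o_0,\xi,n):n\in\NN\}$ for $\xi\in\partial_\infty S$, where $o_0\in S$ is a fixed basepoint. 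Each $\mathcal{B}(x)$ is a filter base, and $\xi\in U(o_0,\xi,n)$ because $(\xi,\xi)_{o_0}^S=+\infty$ (any sequence representing $\xi$ converges to infinity, so the $\liminf$ in \eqref{eq:gromov prod for boundary} is $+\infty$).

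First I would record three facts about the extended Gromov product; fix $\delta\ge0$ as in Proposition \ref{prop:hyp sp} and let $\kappa=\kappa(\delta)$ be a suitable constant. (i) For any representatives $\xi=[(x_n)]$, $\eta=[(y_n)]$ one has $\liminf_{i,j\to\infty}(x_i,y_j)_o^S\le(\xi,\eta)_o^S$, and for every $R<(\xi,\eta)_o^S$ there are representatives with $\liminf_{i,j\to\infty}(x_i,y_j)_o^S\ge R$; both are immediate from \eqref{eq:gromov prod for boundary}. (ii) The four-point inequality $(\xi,\zeta)_o^S\ge\min\{(\xi,\eta)_o^S,(\eta,\zeta)_o^S\}-\kappa$ holds for all $\xi,\eta,\zeta\in S\cup\partial_\infty S$; this is obtained by applying Proposition \ref{prop:hyp sp}(2) to triples of points of $S$ drawn from near-optimal representatives, passing to $\liminf$'s, and using (i) (cf. \cite{BH}). (iii) For $o,o'\in S$ one has $|(\xi,\eta)_o^S-(\xi,\eta)_{o'}^S|\le d_S(o,o')$, and for $y,\eta\in S$ and any $\xi\in S\cup\partial_\infty S$ one has $(\eta,\xi)_o^S\ge(y,\xi)_o^S-d_S(y,\eta)$; both are read off \eqref{eq:gromov product} at the level of representatives. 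From the first part of (iii), $U(o',\xi,n+\lceil d_S(o,o')\rceil)\subset U(o,\xi,n)$, so the families $\{U(o,\xi,n)\}_n$ at different basepoints are mutually cofinal; hence it suffices to produce the topology using the single basepoint $o_0$, after which the neighborhood-basis conclusion holds verbatim for every $o\in S$.

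Next I would verify the criterion. For $x\in S$ this is just the metric topology: given $B(x,1/n)$, set $V=B(x,1/2n)$ and, for $y\in V$, $W=B(y,1/2n)\subset B(x,1/n)$ by the triangle inequality. For $x=\xi\in\partial_\infty S$, put $C=\lceil\kappa\rceil+1\in\NN$ and $V=U(o_0,\xi,n+C)$, and let $y\in V$, so $(y,\xi)_{o_0}^S>n+C$. If $y\in S$, take $W=B(y,1)$: for $\eta\in W$ the second estimate of (iii) gives $(\eta,\xi)_{o_0}^S\ge(y,\xi)_{o_0}^S-d_S(y,\eta)>(n+C)-1\ge n$, so $W\subset U(o_0,\xi,n)$. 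If $y\in\partial_\infty S$, take $W=U(o_0,y,n+C)$: for $\eta\in W$ one has $(\eta,y)_{o_0}^S>n+C$, hence by (ii) $(\eta,\xi)_{o_0}^S\ge\min\{(\eta,y)_{o_0}^S,(y,\xi)_{o_0}^S\}-\kappa>(n+C)-\kappa\ge n+1>n$, so $W\subset U(o_0,\xi,n)$. This produces the topology $\mathcal{O}_S$, and its relative topology on $S$ coincides with the metric topology because at every $x\in S$ the two topologies share the neighborhood basis $\{B(x,1/n)\}_n$ (a metric-open $O\subset S$ is an $\mathcal{O}_S$-neighborhood of each of its points, hence $\mathcal{O}_S$-open, and conversely any $\mathcal{O}_S$-open set meets $S$ in a metric-open set). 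Uniqueness follows since a topology is determined by a neighborhood basis at each point: for $\xi\in\partial_\infty S$ the basis is stipulated, and for $x\in S$ one uses, in addition to the relative-topology condition, that each $U(o,\xi,n)$ meets $S$ in an unbounded set, which forces an $\mathcal{O}_S$-open set extending a bounded metric ball to lie inside $S$.

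I expect the main obstacle to be fact (ii), together with the selection of near-optimal representatives behind (i): passing from the $\sup$-of-$\liminf$ definition \eqref{eq:gromov prod for boundary} to a genuinely usable four-point inequality for the extended Gromov product, with a uniform additive error and with all of $\xi,\eta,\zeta$ possibly on $\partial_\infty S$, is the one place where one manipulates $\delta$-hyperbolicity together with limits and must be careful about the order in which the limits are taken. The exact value of $\kappa$ obtained there is irrelevant: once (i)--(iii) hold with some $\kappa=\kappa(\delta)$, enlarging $C$ to $\lceil\kappa\rceil+1$ closes the argument above.
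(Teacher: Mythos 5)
The paper itself gives no proof of this proposition — it is stated as a standard fact in the review of hyperbolic spaces with a pointer to [BH] — and your argument is exactly the standard construction: generate the topology from the prescribed filter bases via the usual neighborhood-basis axioms, using the extended four-point inequality, and pin down uniqueness at points of $S$ by the unboundedness of $U(o,\xi,n)\cap S$; all the estimates and the choice $C=\lceil\kappa\rceil+1$ check out. The only compressed step is your fact (ii), which indeed holds with $\kappa=2\delta$: when the middle point $\eta$ lies on the boundary one must reconcile the two near-optimal representatives of $\eta$ coming from $(\xi,\eta)_o^S$ and $(\eta,\zeta)_o^S$, which is done by noting that equivalent sequences have Gromov product tending to infinity and applying Proposition \ref{prop:hyp sp}(2) twice before passing to the $\liminf$ — precisely the argument in [BH, III.H.3], so your proof is complete in substance.
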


\begin{rem}
    When a group $G$ acts on $S$ isometrically, this action naturally extends to the homeomorphic action on $S\cup\partial_\infty S$.
\end{rem}

\section{Hyperfiniteness in the case of right angled Artin groups}
\label{sec:Hyperfiniteness in the case of right angled Artin groups}

The goal of this section is to prove Proposition \ref{prop:hyperfinite raag}, which is about hyperfiniteness of the boundary action of right angled Artin groups. This will be used to show Theorem \ref{thm:main} in Section \ref{sec:Proof of main theorem}. We start by stating a well known fact from descriptive set theory in Lemma \ref{lem:weak Borel homo} below. We write down its proof for the convenience of the readers, although it is stated in \cite[Theorem 7.23 (i)]{Kec25} without details of the proof.

\begin{lem}\label{lem:weak Borel homo}
    Let $E$ and $F$ be Borel equivalence relations on standard Borel spaces $X$ and $Y$ respectively. Suppose that $F$ is hyperfinite and there exists a countable-to-1 Borel map $f \colon X \to Y$ such that for any $x,y \in X$, $x\, E\, y \Rightarrow f(x) \, F \, f(y)$. Then, $E$ is hyperfinite.
\end{lem}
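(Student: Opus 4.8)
The plan is to exhibit $E$ as an increasing union of hyperfinite CBERs obtained by pulling back a hyperfinite filtration of $F$ along $f$. First I would record that $E$ is automatically a CBER: it is Borel by hypothesis, and for each $x\in X$ the homomorphism property gives $[x]_E\subseteq f^{-1}([f(x)]_F)$, a countable union of countable sets because $F$ is hyperfinite (so each $F$-class is countable) and $f$ is countable-to-one. The same reasoning shows that $\ker f:=\{(x,y)\in X^2\mid f(x)=f(y)\}$ is a CBER, and it is visibly smooth, with $f$ itself as the reduction; hence $\ker f$ is hyperfinite, and therefore so is its Borel subequivalence relation $E':=E\cap\ker f$.

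Next, fix finite Borel equivalence relations $F_0\subseteq F_1\subseteq\cdots$ on $Y$ with $\bigcup_{n}F_n=F$, and put $E_n:=E\cap(f\times f)^{-1}(F_n)$. Each $E_n$ is a Borel equivalence relation; it is countable, since $[x]_{E_n}\subseteq f^{-1}([f(x)]_{F_n})$ is a finite union of countable sets; the $E_n$ increase with $n$; and $\bigcup_nE_n=E\cap(f\times f)^{-1}(F)=E$, using $E\subseteq(f\times f)^{-1}(F)$. Moreover $E'\subseteq E_n$ for every $n$, because $\ker f\subseteq(f\times f)^{-1}(F_n)$ by reflexivity of $F_n$. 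The crucial point is that each $E_n$-class contains only finitely many $E'$-classes: for $x',x''\in[x]_{E_n}$ one has $x'\, E'\, x''\iff f(x')=f(x'')$, and $f([x]_{E_n})\subseteq[f(x)]_{F_n}$, which is finite since $F_n$ is a finite equivalence relation. Hence Proposition \ref{prop:JKL}, applied to $E'\subseteq E_n$, yields that each $E_n$ is hyperfinite.

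Finally, $E=\bigcup_nE_n$ is an increasing union of hyperfinite CBERs, and so is itself hyperfinite. I do not expect a genuine obstacle here: once one hits on the pullback filtration $E_n=E\cap(f\times f)^{-1}(F_n)$, the argument is bookkeeping, and the only external inputs are standard facts from the Dougherty--Jackson--Kechris / Jackson--Kechris--Louveau circle of ideas already invoked in the paper, namely that smooth CBERs are hyperfinite, that Borel subequivalence relations of hyperfinite CBERs are hyperfinite, and that increasing unions of hyperfinite CBERs are hyperfinite. The one point requiring a little care is making sure that every relation introduced (most importantly $E$, $E'$, and the $E_n$) is genuinely countable, so that these CBER closure properties apply; this is exactly where the countable-to-one hypothesis on $f$ is used.
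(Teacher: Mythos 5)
Your argument is correct, but it takes a genuinely different route from the paper. The paper's proof is a short reduction argument: it sets $E'=(f\times f)^{-1}(F)$, notes that $E'$ (hence also $E\subset E'$) is countable because $f$ is countable-to-one and $F$ is countable, observes that $f$ is by construction a Borel reduction of $E'$ to the hyperfinite relation $F$, so $E'$ is hyperfinite by \cite[Proposition 5.2(2)]{DJK94}, and then concludes for $E$ by closure of hyperfiniteness under Borel subequivalence relations. You instead pull back an exhausting sequence of finite relations $F_n$ for $F$ to $E_n=E\cap(f\times f)^{-1}(F_n)$, prove each $E_n$ hyperfinite by combining the smoothness (hence hyperfiniteness) of $\ker f$ with Proposition \ref{prop:JKL} applied to $E\cap\ker f\subset E_n$, and finish with the Dougherty--Jackson--Kechris theorem that an increasing union of hyperfinite CBERs is hyperfinite. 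All of your steps check out: the key counting that each $E_n$-class meets only finitely many $(E\cap\ker f)$-classes is exactly where finiteness of the $F_n$-classes enters, and the verifications that $E$, $\ker f$, and each $E_n$ are countable are exactly where the countable-to-one hypothesis is used. The trade-off is that your proof is more hands-on and produces an explicit (countable, not finite) filtration of $E$, but it leans on the increasing-union theorem, which is a substantially deeper result than anything in the paper's two-line argument and, contrary to your closing remark, is not among the facts the paper actually cites; the paper's route gets the same conclusion from the more elementary closure properties of hyperfiniteness under Borel reducibility (for countable relations) and under subrelations.
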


\begin{proof}
    Define $E' \subset X^2$ by $E' = (f \times f)^{-1}(F)$. $E'$ is a Borel equivalence relation. Since $F$ is hyperfinite and $f$ is countable-to-1, both $E'$ and $F$ are countable. Note $E$ is also countable by $E \subset E'$. Since $E'$ is Borel reducible to $F$, the CBER $E'$ is hyperfinite by \cite[Proposition 5.2 (2)]{DJK94}. Hence, $E$ hyperfinite by \cite[Proposition 5.2 (1)]{DJK94}.
\end{proof}

Next, we prepare easy results on CAT(0) cube complexes from Lemma \ref{lem:grid between convex sets} up to Corollary \ref{cor:proj for conv sets RAAG}, which are used to prove Lemma \ref{lem:A(Gamma) act H_N}. These results should be well known to experts, but have not been written down in a way that we want to use to show Lemma \ref{lem:A(Gamma) act H_N} as far as I know.

In Lemma \ref{lem:grid between convex sets} and Corollary \ref{cor:proj for conv sets} below, we will always consider a combinatorial path in the $1$-skeleton of $X$ when we say a path or geodesic in $X$.

In Lemma \ref{lem:grid between convex sets}, we consider $[0,n]\times[0,m] \subset \RR^2$ as a $2$-dimensional cube complex defined naturally by integer points.

\begin{figure}[htbp]
  \begin{center}
 \hspace{0mm} 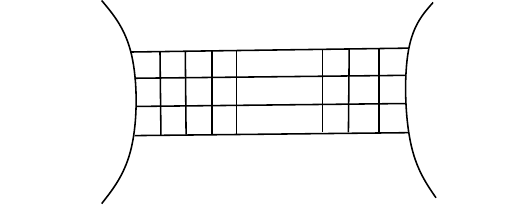
  \end{center}
   \vspace{-3mm}
  \caption{Lemma \ref{lem:grid between convex sets}}
  \label{convex}
\end{figure}

\begin{lem}\label{lem:grid between convex sets}
    Let $X$ be a CAT(0) cube complex and $A,B \subset X^{(0)}$ be convex sets with $A \cap B = \emptyset$. Let $a_0,a_1 \in A$ and $b_0,b_1 \in B$ satisfy $d_X(a_0,b_0)=d_X(a_1,b_1)=d_X(A,B)$. Then, we have $d_X(a_0,a_1)=d_X(b_0,b_1)$. Moreover, for any geodesic $p$ from $a_0$ to $b_0$ and any geodesic $q$ from $a_0$ to $a_1$ in $X$, there exist a geodesic $r$ from $a_1$ to $b_1$ in $X$, a geodesic $s$ from $b_0$ to $b_1$ in $X$, and a combinatorial map $\psi\colon [0,n]\times[0,m] \to X$, where $n = d_X(A,B)$ and $m = d_X(a_0,a_1)$, such that $\psi([0,n]\times\{0\})=p$, $\psi(\{n\}\times[0,m])=s$, $\psi([0,n]\times\{m\})=r$, and $\psi(\{0\}\times[0,m])=q$ (i.e. $\psi$ is a disk diagram for the loop $psr^{-1}q^{-1}$).
\end{lem}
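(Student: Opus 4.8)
The plan is to translate everything into the combinatorics of separating hyperplanes and then realize the disk diagram as an explicit rectangular grid of medians. For $\xi,\eta\subset \R X$ write $\mathrm{Sep}(\xi,\eta)\subset\H(X)$ for the set of hyperplanes separating $\xi$ from $\eta$; by Remark \ref{rem:distance of convex sets}, $|\mathrm{Sep}(C,D)|=d_X(C,D)$ for convex $C,D$, and in particular $d_X(u,v)=|\mathrm{Sep}(u,v)|$ and $u,v\in X^{(0)}$ are adjacent iff they differ in exactly one halfspace coordinate. First I would pin down the separating sets of the four corners. Put $\mathcal{W}=\mathrm{Sep}(A,B)$, so $|\mathcal{W}|=n$. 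Each $h\in\mathcal{W}$ separates $a_0$ from $b_0$, so $\mathcal{W}\subseteq\mathrm{Sep}(a_0,b_0)$; since $|\mathrm{Sep}(a_0,b_0)|=d_X(a_0,b_0)=d_X(A,B)=|\mathcal{W}|$, this forces $\mathrm{Sep}(a_0,b_0)=\mathcal{W}$, and likewise $\mathrm{Sep}(a_1,b_1)=\mathcal{W}$. No $h\in\mathcal{W}$ separates $a_0$ from $a_1$ or $b_0$ from $b_1$ (each of $A,B$ lies on one side of $h$), so $\mathrm{Sep}(a_0,a_1)$ and $\mathrm{Sep}(b_0,b_1)$ are disjoint from $\mathcal{W}$; and for $h\notin\mathcal{W}$ we have $a_0(h)=b_0(h)$ and $a_1(h)=b_1(h)$, whence $a_0(h)\neq a_1(h)\iff b_0(h)\neq b_1(h)$. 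Therefore $\mathcal{U}:=\mathrm{Sep}(a_0,a_1)=\mathrm{Sep}(b_0,b_1)$, which already gives $d_X(a_0,a_1)=d_X(b_0,b_1)$; note $|\mathcal{U}|=m$ and $\mathcal{W}\cap\mathcal{U}=\emptyset$.

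Next I would build the grid using the median map $\mathrm{med}(\cdot,\cdot,\cdot)$ on $X^{(0)}$ (well defined and valued in $X^{(0)}$, since the $1$-skeleton of a CAT(0) cube complex is a median graph; see \cite{Wis12,BSV14}). Write $p=(P_0=a_0,P_1,\dots,P_n=b_0)$; as $p$ is a geodesic of length $n=|\mathcal{W}|$, the hyperplane dual to its $i$-th edge, call it $h_i$, enumerates $\mathcal{W}=\{h_1,\dots,h_n\}$, and $P_i$ lies on the $b_0$-side of $h_t$ iff $t\le i$; likewise write $q=(Q_0=a_0,\dots,Q_m=a_1)$, let $k_j$ be the hyperplane dual to its $j$-th edge so $\mathcal{U}=\{k_1,\dots,k_m\}$, and $Q_j$ lies on the $a_1$-side of $k_s$ iff $s\le j$. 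For $0\le i\le n$, $0\le j\le m$ set $c_{i,j}=\mathrm{med}(P_i,Q_j,b_1)$. Using that $P_i$ agrees with $a_0$ off $\mathcal{W}$, that $Q_j$ agrees with $a_0$ off $\mathcal{U}$, that $\mathcal{W}\cap\mathcal{U}=\emptyset$, and that $b_1$ lies on the $b_0$-side of every $h_t$ and on the $a_1$-side of every $k_s$, one computes coordinate-by-coordinate that $c_{i,j}(h)=a_0(h)$ for $h\notin\mathcal{W}\cup\mathcal{U}$, that $c_{i,j}$ lies on the $b_0$-side of $h_t$ iff $t\le i$, and that $c_{i,j}$ lies on the $a_1$-side of $k_s$ iff $s\le j$. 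Hence: all the $c_{i,j}$ are distinct; $c_{i,0}=P_i$ and $c_{0,j}=Q_j$; $c_{0,m}=a_1$, $c_{n,0}=b_0$, $c_{n,m}=b_1$; and $c_{i,j}$ and $c_{i+1,j}$ differ only in $h_{i+1}$, while $c_{i,j}$ and $c_{i,j+1}$ differ only in $k_{j+1}$.

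Finally I would assemble these into a combinatorial map. By the last observation, consecutive $c_{i,j},c_{i+1,j}$ (resp. $c_{i,j},c_{i,j+1}$) are joined by an edge dual to $h_{i+1}$ (resp. $k_{j+1}$); so the bottom row is $p$, the left column is $q$, the top row $r:=(c_{0,m},\dots,c_{n,m})$ is an edge-path of length $n$ from $a_1$ to $b_1$, and the right column $s:=(c_{n,0},\dots,c_{n,m})$ is an edge-path of length $m$ from $b_0$ to $b_1$; by the distance equality of the first part, $r$ and $s$ are geodesics. For each unit square of the grid the four vertices $c_{i,j},c_{i+1,j},c_{i,j+1},c_{i+1,j+1}$ are distinct, $c_{i+1,j+1}$ is at distance $2$ from $c_{i,j}$, and $c_{i+1,j},c_{i,j+1}$ are two distinct common neighbours of $c_{i,j}$ and $c_{i+1,j+1}$; since two vertices of a CAT(0) cube complex at distance $2$ with two distinct common neighbours span a $2$-cube together with them, these four vertices bound a $2$-cube of $X$. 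Mapping each unit square onto the corresponding $2$-cube gives a combinatorial map $\psi\colon[0,n]\times[0,m]\to X$ with $\psi([0,n]\times\{0\})=p$, $\psi(\{0\}\times[0,m])=q$, $\psi([0,n]\times\{m\})=r$, and $\psi(\{n\}\times[0,m])=s$, as required.

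The only genuinely delicate choices are taking the third argument of the median to be $b_1$ — which is exactly what makes the coordinates of $c_{i,j}$ come out as the ``product of two intervals'' pattern used above — and invoking the correct elementary facts about CAT(0) cube complexes: that $d_X$ counts separating hyperplanes, that the median of vertices is a vertex, and that a $4$-cycle of the form $c_{i,j}\,c_{i+1,j}\,c_{i+1,j+1}\,c_{i,j+1}$ bounds a $2$-cube. Everything else is routine bookkeeping with halfspaces, and the degenerate cases $m=0$ (where $b_0=b_1$ by uniqueness of the nearest-point projection of $a_0$ to the convex set $B$) or $n=0$ are handled the same way.
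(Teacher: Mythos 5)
Your proposal is correct, but it takes a genuinely different route from the paper. The paper argues with disk diagrams: it fixes auxiliary geodesics $r'$, $s'$, takes a minimal-area diagram for $ps'r'^{-1}q^{-1}$, analyses dual curves, and uses triangle and corner moves (from \cite{Sag95,Wis12}) to replace $r'$, $s'$ by geodesics $r$, $s$ whose dual hyperplane sequences match those of $p$, $q$, after which minimality of area forces the diagram to be the $n\times m$ grid; the equality $d_X(a_0,a_1)=d_X(b_0,b_1)$ is obtained beforehand from the median property and convexity, as in \eqref{eq:length of diagonal}. You instead work entirely in halfspace coordinates: you show $\mathrm{Sep}(a_0,b_0)=\mathrm{Sep}(a_1,b_1)=\mathcal{W}$ and $\mathrm{Sep}(a_0,a_1)=\mathrm{Sep}(b_0,b_1)=\mathcal{U}$ with $\mathcal{W}\cap\mathcal{U}=\emptyset$ (which already yields the distance equality, using Remark \ref{rem:distance of convex sets}), and then you build the grid explicitly as $c_{i,j}=\mathrm{med}(P_i,Q_j,b_1)$, verify its coordinates by majority vote, and fill each unit $4$-cycle by the standard fact that a $4$-cycle whose opposite vertices are at distance $2$ bounds a square (equivalently, edges at a common vertex dual to crossing hyperplanes span a $2$-cube). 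Your coordinate computations check out, including $c_{i,0}=P_i$, $c_{0,j}=Q_j$, $c_{n,m}=b_1$, and the one-hyperplane differences along grid edges, and the degenerate cases are handled. What each approach buys: yours is more self-contained and constructive, producing $r$ and $s$ canonically with the same hyperplanes in the same order as $p$ and $q$ (precisely the property the paper extracts in \eqref{eq:hp of p = hp of r} and \eqref{eq:hp of q = hp of s} and later exploits for the label-matching in Corollary \ref{cor:proj for conv sets RAAG}), at the cost of invoking two median-graph facts (the majority-vote description of the median and the square-filling of $4$-cycles) that the paper's preliminaries do not set up; the paper's route stays within the disk-diagram toolkit it already cites.
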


\begin{proof}
    By $d_X(a_0,b_0)=d_X(a_1,b_1)=d_X(A,B)$, the median property of $X$, and convexity of $A$ and $B$, we can see for any $i \in \{0,1\}$,
    \begin{align}\label{eq:length of diagonal}
        d_X(a_i,b_{1-i}) 
        = d_X(a_i,b_i) + d_X(b_i,b_{1-i})
        = d_X(a_i,a_{1-i}) + d_X(a_{1-i}, b_{1-i}).
    \end{align}
    Take a geodesic $r'$ from $a_1$ to $b_1$ and a geodesic $s'$ from $b_0$ to $b_1$ in $X$. Let $(p_1,\ldots,p_n)$, $(q_1,\ldots,q_m)$, $(r'_1,\ldots,r'_n)$, and $(s'_1,\ldots,s'_\ell)$ be the sequences of $1$-cubes composing the geodesics $p$, $q$, $r'$, and $s'$ respectively, where $n = d_X(A,B)$, $m = d_X(a_0,a_1)$, and $\ell = d_X(b_0,b_1)$. For $p_i$ (resp. $q_i$, $r'_i$, and $s'_i$), let $\widehat{p}_i$ (resp. $\widehat{q}_i$, $\widehat{r}'_i$, and $\widehat{s}'_i$) be the hyperplane dual to $p_i$ (resp. $q_i$, $r'_i$, and $s'_i$). We have (see Remark \ref{rem:distance of convex sets})
    \begin{align}\label{eq:hyperplanes of p and r'}
        \{\widehat{p}_1,\ldots,\widehat{p}_n\} = \{\widehat{r}'_1,\ldots,\widehat{r}'_n\}.
    \end{align}
    
    Take a disk diagram $D \to X$ for the loop $ps'r^{\prime-1}q^{-1}$ with minimal area i.e. the number of $2$-cubes in $D$ is minimum among all disk diagrams of the same boundary path in $X$ (see \cite[Section 3.1]{Wis12}). By \eqref{eq:length of diagonal} and Remark \ref{rem:distance of convex sets}, every dual curve in $D$ starting from $p$ must end on $r'$. By this and \eqref{eq:hyperplanes of p and r'}, there exists a bijection $\sigma \colon \{1,\ldots,n\}\to\{1,\ldots,n\}$ such that the dual curve starting from $p_i$ ends on $r'_{\sigma(i)}$ for any $i$. 
    
    If $1<\sigma(1)$, then by $1<\sigma^{-1}(\sigma(1)-1)$, the dual curves $\alpha$ in $D$ from $p_1$ to $r'_{\sigma(1)}$ and the dual curve $\beta$ in $D$ from $p_{\sigma^{-1}(\sigma(1)-1)}$ to $r'_{\sigma(1)-1}$ cross each other. By repeating triangle moves (see the proof of \cite[Lemma 4.2]{Sag95} or \cite[Theorem 3.2]{Wis12}), we may assume that $r'$, $\alpha$, and $\beta$ form a corner (see \cite[Section 4.1]{Sag95}). Hence, the $1$-cubes $r'_{\sigma(1)-1}$ and $r'_{\sigma(1)}$ span a $2$-cube in $X$. Hence, by corner move, there exists $1$-cubes $e_0$ and $e_1$ in $X$ such that $(r'_1,\ldots,r'_{\sigma(1)-2},e_0,e_1,r'_{\sigma(1)+1},\ldots,r'_n)$ is a geodesic in $X$ from $a_1$ to $b_1$ and we have $\widehat{e}_0=\widehat{p}_{\sigma(1)}$ and $\widehat{e}_1=\widehat{p}_{\sigma(1)-1}$. By repeating this procedure, we get a geodesic $r$ in $X$ from $a_1$ to $b_1$ such that
    \begin{align}\label{eq:hp of p = hp of r}
        \forall\,i,\, \widehat{p}_i = \widehat{r}_i.
    \end{align}

    Take a disk diagram $D_1$ for the loop $ps'r^{-1}q^{-1}$ with minimal area. By \eqref{eq:hp of p = hp of r}, for any $i$, the dual curve $\alpha_i$ in $D_1$ starting from $p_i$ must end on $r_i$. Note that no distinct dual curves $\alpha_i$ and $\alpha_j$ cross since they could cross at most once by \cite[Theorem 4.4]{Sag95}.

    Since both $q$ and $s'$ are geodesic, every dual curve starting from $q$ must end on $s'$ and vice versa. Hence, we have $m=\ell$ and there exists a bijection $\tau \colon \{1,\ldots,m\}\to\{1,\ldots,m\}$ such that the dual curve starting from $q_j$ ends on $s'_{\sigma(j)}$ for any $j$. By applying corner moves to $s'$ in the same way as $p$ and $r'$, we can get a geodesic $s$ in $X$ from $b_0$ to $b_1$ such that
    \begin{align}\label{eq:hp of q = hp of s}
        \forall\,j,\, \widehat{q}_j = \widehat{s}_j.
    \end{align}
    
     Take a disk diagram $D_2$ for the loop $psr^{-1}q^{-1}$ with minimal area. By \eqref{eq:hp of q = hp of s}, for any $j$, the dual curve $\beta_j$ in $D_2$ starting from $q_j$ must end on $s_j$. Since the dual curves $\alpha_1,\ldots,\alpha_n$ and $\beta_1,\ldots,\beta_m$ form a grid, we have $D_2=[0,n]\times[0,m]$.
\end{proof}

Lemma \ref{lem:grid between convex sets} has two important corollaries, Corollary \ref{cor:proj for conv sets} and Corollary \ref{cor:proj for conv sets RAAG}. We introduce notation to state these corollaries in Definition \ref{def:rhoAB} below.

\begin{defn}\label{def:rhoAB}
    Let $X$ be a CAT(0) cube complex and $A,B \subset X^{(0)}$ be convex sets. Define $\rho_A^B \subset A$ and $\rho_B^A \subset B$ by
    \begin{align}
        \rho_A^B=\{a\in A\mid d_X(a,B)=d_X(A,B)\} 
        ~~~{\rm and}~~~ 
        \rho_B^A=\{b\in B\mid d_X(b,A)=d_X(A,B)\}.
    \end{align}
    Define a map $\psi_B^A \colon \rho_A^B \to \rho_B^A$ as follows; for $a \in \rho_A^B$, $\psi_B^A(a)$ is a unique $0$-cube in $B$ such that $d_X(a,\psi_B^A(a)) = d_X(A,B)$ (the uniqueness follows from Remark \ref{rem:distance of convex sets}).
\end{defn}

\begin{cor}\label{cor:proj for conv sets}
    Let $X$ be a CAT(0) cube complex. For any convex sets $A,B \subset X^{(0)}$, (1) and (2) below hold.
    \begin{itemize}
        \item[(1)]
        The sets $\rho_A^B$ and $\rho_B^A$ are convex and the map $\psi_B^A \colon \rho_A^B \to \rho_B^A$ is bijective.
        \item[(2)] 
        For any convex subset $C \subset A$, $\psi_B^A(C)$ is convex. 
    \end{itemize}
\end{cor}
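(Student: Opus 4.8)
The plan is to dispose of the degenerate case first, then split (1) into bijectivity (which uses only nearest-point projections onto convex sets) and convexity (which uses the grid from Lemma \ref{lem:grid between convex sets}), and finally deduce (2) from the same grid but with the roles of $A$ and $B$ exchanged. If $A\cap B\neq\emptyset$ then $d_X(A,B)=0$, so $\rho_A^B=\rho_B^A=A\cap B$, which is convex as an intersection of convex sets, and $\psi_B^A$ is the identity on it; so from now on I would assume $A\cap B=\emptyset$ and set $n=d_X(A,B)$.

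For the bijectivity in (1) I would exhibit $\psi_A^B$ as a two-sided inverse of $\psi_B^A$. Given $a\in\rho_A^B$, put $b=\psi_B^A(a)$, so $d_X(a,b)=d_X(a,B)=n$; then $n\le d_X(b,A)\le d_X(b,a)=n$ shows $b\in\rho_B^A$, and since $a\in A$ satisfies $d_X(b,a)=n=d_X(b,A)$, the uniqueness of the nearest $0$-cube on a convex set (Remark \ref{rem:distance of convex sets}) forces $\psi_A^B(b)=a$. Hence $\psi_A^B\circ\psi_B^A=\mathrm{id}$, and symmetrically $\psi_B^A\circ\psi_A^B=\mathrm{id}$. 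For the convexity of $\rho_A^B$ I would take $a_0,a_1\in\rho_A^B$, a geodesic $q$ from $a_0$ to $a_1$, put $b_i=\psi_B^A(a_i)$ (so $d_X(a_i,b_i)=n$), and feed $q$ together with any geodesic $p$ from $a_0$ to $b_0$ into Lemma \ref{lem:grid between convex sets}, obtaining $\psi\colon[0,n]\times[0,m]\to X$ with $\psi(\{0\}\times[0,m])=q$ and $\psi(\{n\}\times[0,m])=s$ a geodesic from $b_0$ to $b_1$. For a vertex $v=\psi(0,j)$ of $q$, the row $\psi([0,n]\times\{j\})$ is an edge-path of length at most $n$ from $v\in A$ (convexity of $A$) to $\psi(n,j)\in B$ (convexity of $B$), so $n=d_X(A,B)\le d_X(v,B)\le n$, i.e. $v\in\rho_A^B$; thus $\rho_A^B$, and by symmetry $\rho_B^A$, is convex.

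For (2), since $C\cap\rho_A^B$ is convex I may assume $C\subseteq\rho_A^B$. Given $b_i=\psi_B^A(a_i)$ with $a_i\in C$ and a geodesic $\gamma$ from $b_0$ to $b_1$, I would apply Lemma \ref{lem:grid between convex sets} \emph{with $A$ and $B$ swapped}: to the pair $(B,A)$, the points $b_0,b_1\in B$ and $a_0,a_1\in A$ with $d_X(b_i,a_i)=n$, taking $\gamma$ in the role of ``$q$'' and any geodesic from $b_0$ to $a_0$ in the role of ``$p$''. This yields $\psi\colon[0,n]\times[0,m]\to X$ with $\psi(\{0\}\times[0,m])=\gamma$ and $\psi(\{n\}\times[0,m])=s$ a geodesic from $a_0$ to $a_1$. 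For each $j$ set $c_j=\psi(0,j)\in\gamma^{(0)}\subseteq B$ and $c_j'=\psi(n,j)\in s^{(0)}\subseteq C\subseteq A$; the row $\psi([0,n]\times\{j\})$ is an edge-path of length at most $n$ between them, so $n\le d_X(c_j',B)\le d_X(c_j',c_j)\le n$. Hence $c_j\in B$ realizes $d_X(c_j',B)=n=d_X(A,B)$, so $c_j'\in\rho_A^B$ and, by uniqueness of the nearest point, $c_j=\psi_B^A(c_j')\in\psi_B^A(C)$. As $\gamma^{(0)}=\{c_0,\dots,c_m\}$, the set $\psi_B^A(C)$ is convex.

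I do not expect a serious obstacle: all the geometric work is already packaged in Lemma \ref{lem:grid between convex sets}, whose grid makes the nearest-point projection between two convex sets behave like the coordinate projection of a rectangle. The only points needing care are (i) applying the lemma with $A$ and $B$ interchanged in part (2), so that the geodesic one must control is the one the lemma permits choosing freely, and (ii) noting that only the crude estimate ``a row of the grid is an edge-path of length at most $n$'' is ever used, so nothing beyond combinatoriality of the disk-diagram map is needed.
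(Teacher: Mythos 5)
Your proof is correct and follows essentially the same route as the paper: the degenerate case $A\cap B\neq\emptyset$ is handled identically, bijectivity comes from exhibiting $\psi_A^B$ as the inverse via uniqueness of nearest points, and both the convexity of $\rho_A^B,\rho_B^A$ and part (2) (with the roles of $A$ and $B$ swapped) are read off from the grid of Lemma \ref{lem:grid between convex sets}. You merely spell out details (the length-$n$ rows, the reduction to $C\subseteq\rho_A^B$) that the paper leaves implicit.
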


\begin{proof}
    When $A\cap B \neq \emptyset$, the statements follow by $A \cap B = \rho_A^B = \rho_B^A$ and $\psi_B^A=\mathrm{id}_{A \cap B}$. In the following, we assume $A\cap B = \emptyset$.
    
    (1) Convexity of $\rho_A^B$ and $\rho_B^A$ follows from Lemma \ref{lem:grid between convex sets}. The map $\psi_B^A(C)$ is bijective since we can define $\psi_A^B \colon \rho_B^A \to \rho_A^B$ similarly.

    (2) Let $c_0,c_1 \in C$ and fix a geodesic $p$ from $\psi_B^A(c_0)$ to $c_0$. By Lemma \ref{lem:grid between convex sets}, for any geodesic $q$ in $X$ from $\psi_B^A(c_0)$ to $\psi_B^A(c_1)$, there exist geodesics $r$ and $s$ in $X$, respectively, from $\psi_B^A(c_1)$ to $c_1$ and from $c_0$ to $c_1$ that satisfy the property of Lemma \ref{lem:grid between convex sets}. Hence, $q^{(0)} \subset \psi_B^A(C)$.
\end{proof}

\begin{cor}\label{cor:proj for conv sets RAAG}
    Let $\Gamma$ be a finite simple graph. For any convex sets $A,B \subset X(\Gamma)^{(0)}$, (1) and (2) below hold (see Definition \ref{def:RAAG}).
    \begin{itemize}
        \item[(1)] 
        There exists $g \in A(\Gamma)$ such that for any $a \in \rho_A^B$, we have $g=a^{-1}\psi_B^A(a)$.
        \item[(2)]
        If, in addition, $A$ (resp. $B$) is a coset of $\la V_A \ra$ (resp. $\la V_B \ra$) for some $V_A \subset V(\Gamma)$ (resp. $V_B\subset V(\Gamma)$), then there exists $V_{A,B} \subset V_A \cap V_B$ such that both $\rho_A^B$ and $\rho_B^A$ are a coset of $\la V_{A,B} \ra$. Moreover, for any $U \subset V_{A,B}$ and any coset $C$ of $\la U \ra$ satisfying $C \subset A$, the set $\psi_B^A(C)$ is a coset of $\la U \ra$.
    \end{itemize}
\end{cor}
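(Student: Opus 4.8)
The plan is to deduce Corollary \ref{cor:proj for conv sets RAAG} from Lemma \ref{lem:grid between convex sets} by exploiting the translation action $A(\Gamma) \act X(\Gamma)$, whose quotient is the Salvetti complex. The key observation for part (1) is that the disk diagram $\psi \colon [0,n]\times[0,m] \to X(\Gamma)$ produced by Lemma \ref{lem:grid between convex sets} records, along each horizontal edge of a fixed row, the same sequence of hyperplane labels; concretely, for $a_0,a_1 \in \rho_A^B$ with geodesics $p$ from $a_0$ to $b_0=\psi_B^A(a_0)$ and $r$ from $a_1$ to $b_1=\psi_B^A(a_1)$, equation \eqref{eq:hp of p = hp of r} gives $\widehat p_i = \widehat r_i$ for all $i$, so $p$ and $r$ carry the same label word $\lab(p)=\lab(r)$ in $V(\Gamma)$ (an edge dual to a hyperplane of label $v$ is labeled $v^{\pm1}$, and the orientations agree because $A,B$ are on fixed sides of each crossing hyperplane). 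Hence $a_0^{-1}b_0 = \lab(p) = \lab(r) = a_1^{-1}b_1$ in $A(\Gamma)$. First I would fix a base point $a^* \in \rho_A^B$, set $g := (a^*)^{-1}\psi_B^A(a^*)$, and then show that for every $a \in \rho_A^B$ one has $a^{-1}\psi_B^A(a) = g$ by the word-equality argument above (or, equivalently, by noting that left translation by $(a^*)^{-1}a \cdot$ conjugation is trivial); this uses only Lemma \ref{lem:grid between convex sets} and Remark \ref{rem:RAAG}. It then also follows that $\rho_B^A = \rho_A^B \cdot g$ as subsets of $X(\Gamma)^{(0)} = A(\Gamma)$, and that $\psi_B^A$ is simply right multiplication by $g$.

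\textbf{Part (2).} Now assume $A = a\la V_A\ra$ and $B = b\la V_B\ra$. I would first identify $\rho_A^B$ as a coset. Using part (1), $\rho_A^B$ is a convex subset of the coset $A = a\la V_A\ra$; by Corollary \ref{cor:proj for conv sets}(1) it is convex, and a standard fact about RAAGs is that a convex subcomplex of $X(\Gamma)$ containing a given vertex $a_0$ and invariant under the natural combinatorial structure is itself a coset of a parabolic $\la U\ra$ precisely when it is "closed under the relevant generators". The cleanest route: translate so that $1 \in \rho_A^B$ (replace $A,B$ by $a^{-1}A, a^{-1}B$, harmless since $A(\Gamma)$ acts by isometries preserving convexity and cosets), so $1 \in \rho_A^B \subset \la V_A\ra$ and $g = \psi_B^A(1) \in \rho_B^A \subset B$. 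The set of hyperplanes separating $A$ from $B$ is finite, say with label set $W \subset V(\Gamma)$; a vertex $v \in V_A$ lies in $\rho_A^B$ (as a length-one element) iff the edge of label $v$ at $1$ is dual to no separating hyperplane, i.e. $v \notin W$, and one checks directly that $v \in \rho_A^B$ forces $v \in V_B$ as well (the hyperplane of label $v$ through $1$ must also be crossed by $B$, which happens iff $v \in V_B$, using that $B = b\la V_B\ra$ and the hyperplanes of $X(\Gamma)$ crossing a parabolic coset are exactly those of label in $V_B$ appropriately placed). Set $V_{A,B} = \{v \in V_A \cap V_B : v \notin W\} = (V_A \cap V_B)\setminus W$. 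I would then verify $\rho_A^B = \la V_{A,B}\ra$: the inclusion $\supset$ follows because $\la V_{A,B}\ra$ is convex, contains $1$, and every vertex in it is realized without crossing a separating hyperplane, so it lies in $\rho_A^B$; the inclusion $\subset$ follows because any $a \in \rho_A^B$ has a geodesic from $1$ to $a$ inside $A = \la V_A\ra$ whose edges are labeled in $V_A$, and if some edge had label $v \in W$ then (by convexity of $A$, $B$ and the median/grid picture of Lemma \ref{lem:grid between convex sets}, exactly as in \eqref{eq:length of diagonal}) the separating hyperplane of label $v$ would have to separate $a$ from $B$ as well, contradicting $a \in \rho_A^B$; so all labels lie in $V_A \setminus W$, and symmetrically, using $\rho_B^A = \rho_A^B g$ and the analogous statement for $B$, in $V_B \setminus W$, hence in $V_{A,B}$. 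Then $\rho_B^A = \rho_A^B \cdot g = \la V_{A,B}\ra g$ is a coset of $\la V_{A,B}\ra$.

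\textbf{The parabolic-within-parabolic refinement.} Finally, given $U \subset V_{A,B}$ and a coset $C = c\la U\ra \subset A$, I want $\psi_B^A(C) = Cg$ to be a coset of $\la U\ra$; but $Cg = c\la U\ra g = (cgg^{-1})\la U\ra g$, and since $U \subset V_{A,B} \subset V_B$ and $g \in \la V_B\ra$ (after our translation, $g \in \rho_B^A \subset \la V_B\ra$, and actually $g \in \la W\cup\ldots\ra$; in any case $cg \cdot g^{-1}\la U\ra g = cg\la U\ra$ would require $U$-invariance under conjugation by $g$). The clean statement: $\psi_B^A(C) = \{ac : a \in C\}\cdot(\text{translate by }g)$ is literally the right-translate $Cg$, and since $U \subset V_{A,B} \subset V_A$, and $g^{-1}\la U\ra g = \la U\ra$ whenever $U$ commutes with the support of $g$ — which need not hold — I instead argue geometrically: $C$ is a convex subset of $\rho_A^B$, so by Corollary \ref{cor:proj for conv sets}(2) $\psi_B^A(C)$ is convex, it is contained in the coset $\rho_B^A$ of $\la V_{A,B}\ra \supset \la U\ra$, and $\psi_B^A$ restricted to $\rho_A^B$ is the restriction of left-translation by $g^{-1}\cdot(\text{conjugation})$... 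Rather than fight this, the intended proof surely just transports the coset structure: the isometry of $X(\Gamma)$ given by $x \mapsto xg$ (right translation, which is an isometry of the $1$-skeleton since $A(\Gamma)$ acts on the left and right translations commute with left ones up to relabeling — more precisely one uses that $\psi_B^A$ agrees with right multiplication by $g$ as maps $A(\Gamma)\to A(\Gamma)$) sends the coset $C = c\la U\ra$ to $cg \cdot (g^{-1}\la U\ra g)$; and $g^{-1}\la U\ra g = \la U\ra$ because $g \in \la V_{A,B}\ra$-coset lands $g$ in the subgroup generated by $V_{A,B}\cup(\text{other separating labels})$, all of which commute with $U$ since $U \subseteq V_{A,B}$ consists of vertices adjacent in $\Gamma$ to every vertex whose hyperplane separates $A$ from $B$ — this last adjacency is forced because those hyperplanes cross every hyperplane of label in $V_{A,B}$ (they all pass "between $A$ and $B$" through the grid of Lemma \ref{lem:grid between convex sets}), and in a RAAG two hyperplanes cross iff their labels are adjacent.

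\textbf{Main obstacle.} The routine parts are the label-word bookkeeping and invoking Lemma \ref{lem:grid between convex sets}; the real work is the last claim of part (2), namely that conjugation by $g$ (equivalently, right translation) preserves the parabolic $\la U\ra$. The crux is the combinatorial fact that every hyperplane separating $A$ from $B$ has label adjacent in $\Gamma$ to every vertex of $V_{A,B}$, which I would prove by feeding a geodesic from $a \in \rho_A^B$ to $\psi_B^A(a)$ (labels $=W$, the separating set) together with a geodesic inside $\rho_A^B$ (labels in $V_{A,B}$) into the grid of Lemma \ref{lem:grid between convex sets}: the two families of dual curves cross, forcing the corresponding pairs of hyperplanes to cross in $X(\Gamma)$, hence the labels to be adjacent. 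Once that adjacency is in hand, $g$ (whose letters all lie in $W$, up to the translation normalization) commutes with every element of $\la U\ra$ for $U\subseteq V_{A,B}$, so $\psi_B^A(c\la U\ra) = cg\la U\ra$ is the desired coset. I expect this crossing-implies-adjacency step, and correctly pinning down which labels appear in $g$, to be where care is needed.
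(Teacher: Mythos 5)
Your part (1) is essentially the paper's argument (opposite sides of the grid of Lemma \ref{lem:grid between convex sets} carry the same label word), and that portion is fine. The gap is in part (2), at the definition of $V_{A,B}$. You set $V_{A,B}=(V_A\cap V_B)\setminus W$, where $W$ is the set of labels of hyperplanes separating $A$ from $B$, and you justify $\la V_{A,B}\ra\subset\rho_A^B$ by saying a length-one element $v\in V_A$ lies in $\rho_A^B$ iff its edge at $1$ is dual to no separating hyperplane. That ``iff'' is false, and so is the inclusion. Counterexample: let $\Gamma$ consist of two non-adjacent vertices $u,v$ (so $A(\Gamma)=F_2$), $A=\la u\ra$, $B=v\la u\ra$. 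Then $d_X(A,B)=1$, $\rho_A^B=\{1\}$, $W=\{v\}$, and your recipe gives $V_{A,B}=\{u\}$, so you would claim $\rho_A^B=\la u\ra$; but $d_X(u,B)=2$. The point is that $x\in\rho_A^B$ is not the condition ``no separating hyperplane is crossed on the way from $1$ to $x$'' but the condition $d_X(x,B)=d_X(A,B)$, and moving by a generator $u$ that fails to commute with the displacement element $g$ strictly increases the distance to $B$ even though the $u$-hyperplane is not a separating one. The correct definition (the one the paper uses) is $V_{A,B}=\{v\in V_A\cap V_B\mid (v,w)\in E(\Gamma)\ \text{for all}\ w\in\supp(g)\}$, i.e.\ adjacency to \emph{every} separating label must be built into the definition, not derived afterwards; with that definition $a\la V_{A,B}\ra\subset\rho_A^B$ is immediate (since $avg=agv\in B$), and the reverse inclusion is your crossing-implies-adjacency grid argument.

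The same flaw propagates into your final step. To get $\psi_B^A(c\la U\ra)=\psi_B^A(c)\la U\ra$ you need $g$ to commute with $\la U\ra$, and you argue that every separating hyperplane crosses every hyperplane whose label lies in $V_{A,B}$ ``because they all pass between $A$ and $B$ through the grid.'' That crossing statement is only forced for labels that actually occur on edges inside $\rho_A^B$ (these are exactly the labels in the paper's $V_{A,B}$, by the grid argument you sketch); it fails for your larger set, as the example above shows ($u$-hyperplanes never cross $v$-hyperplanes there). So as written, both the identification of $\rho_A^B$ as a coset and the commutation $g\la U\ra g^{-1}=\la U\ra$ are unproven, and the fix is precisely to adopt the adjacency condition as the definition of $V_{A,B}$, after which your grid-based arguments go through and coincide with the paper's proof.
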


\begin{proof}
   When $A\cap B \neq \emptyset$, the statements follow by $g=1$ and $V_{A,B} = V_A \cap V_B$. In the following, we assume $A\cap B = \emptyset$.
   
   (1) Let $a_0,a_1 \in \rho_A^B$. Define $b_0 = \psi_B^A(a_0)$ and $b_1 = \psi_B^A(a_1)$ and take geodesics $p,q,r,s$ as in Lemma \ref{lem:grid between convex sets}. Then, we have $\lab(p) = \lab(r)$ since the opposite sides of a square in $X(\Gamma)$ have the same label. Hence, for any $a_0,a_1 \in \rho_A^B$, we have $a_0^{-1}\psi_B^A(a_0) = a_1^{-1}\psi_B^A(a_1)$. Thus, define $g \in A(\Gamma)$ by $g=a_0^{-1}\psi_B^A(a_0)$.

    (2) Take $g \in A(\Gamma)$ as in Corollary \ref{cor:proj for conv sets RAAG} (1) (i.e. $g=a^{-1}\psi_B^A(a)$, $a \in \rho_A^B$) and define $\supp(g) \subset V(\Gamma)$ to be the set of all letters that appear in some (equivalently, any) geodesic word of $g$ with respect to $V(\Gamma)$. Define $V_{A,B} \subset V_A \cap V_B$ by
    \begin{align*}
        V_{A,B}= \{v \in V_A \cap V_B \mid \forall\,w \in \supp(g),\, (v,w) \in E(\Gamma)\}.
    \end{align*}
    Fix $a \in \rho_A^B$ and $b \in \rho_B^A$ satisfying $d_X(a,b) = d_X(A,B)$, then we can see $a\la V_{A,B}\ra \subset \rho_A^B$ and $b\la V_{A,B}\ra \subset \rho_B^A$ by the definition of $V_{A,B}$. The converse inclusions follow from Lemma \ref{lem:grid between convex sets}. Hence, $a\la V_{A,B}\ra = \rho_A^B$ and $b\la V_{A,B}\ra = \rho_B^A$.

    Next, fix $c \in C$, then we have $C=c\la U\ra$. By Lemma \ref{lem:grid between convex sets}, we have $\psi_B^A(C) = \psi_B^A(c) \la U \ra$.
\end{proof}

Now, we start proving Proposition \ref{prop:hyperfinite raag}.

From here up to the end of Section \ref{sec:Hyperfiniteness in the case of right angled Artin groups}, let $\Gamma$ be a finite simple graph $\Gamma$, $A(\Gamma)$ be the right angled Artin group of $\Gamma$, $X(\Gamma)$ be the Cayley graph of $A(\Gamma)$ with respect to $V(\Gamma)$, $\partial_\R X(\Gamma)$ be the Roller boundary of $X(\Gamma)$, and $\H$ be the set of all hyperplanes of $X(\Gamma)$ (see Definition \ref{def:RAAG}).

We first introduce various concepts for the proof of Proposition \ref{prop:hyperfinite raag} in Definition \ref{def:for proof of raag} below.

\begin{defn}\label{def:for proof of raag}
    Let $V(\Gamma) = \{v_1,\ldots,v_N\}$ by aligning vertices of $\Gamma$. For $i \in \{1,\ldots,N\}$, define $\H_i$ to be the set of all hyperplanes of $X(\Gamma)$ whose dual edges have the label $v_i$ or $v_i^{-1}$ (see Remark \ref{rem:RAAG}). For $x,y \in \R X(\Gamma)$ (see Definition \ref{def:CAT(0) cube complex}), define $\H_i(x,y)$ and $\H(x,y)$ by
    \begin{align*}
        \H_i(x,y) &= \{h \in \H_i \mid \text{$h$ separates $x$ and $y$}\},\\
        \H(x,y) &= \{h \in \H \mid \text{$h$ separates $x$ and $y$}\}.
    \end{align*}
    For $x\in X(\Gamma)^{(0)}$, $\xi \in \partial_\R X(\Gamma)$, and $i \in I$, define an order $\le$ on $\H_i(x,\xi)$ by
    \begin{align}\label{eq:well-order on H_i(x,xi)}
        \forall\,h,k \in \H_i(x,\xi),\, h<k \iff d_X(x,h) < d_X(x,k).
    \end{align}
    Given $n \in \NN$, define an equivalence relation $F_n$ on $(\H^\NN)^n$ as follows; for $\alpha=(\alpha_i)_{i=1}^n, \beta=(\beta_i)_{i=1}^n \in (\H^\NN)^n$, where $\alpha_i,\beta_i \in \H^\NN$,
    \begin{align}\label{eq:Fn}
     \alpha \, F_n \, \beta \iff \exists\,g \in A(\Gamma),\,\forall\,i \in\{1,\ldots,n\},\,g\alpha_i\, E_t(\H)\, \beta_i.
    \end{align}
\end{defn}

\begin{rem}\label{rem:Hi}
    We have $\H = \bigsqcup_{i=1}^N \H_i$. For every $i$, the set $\H_i$ is $A(\Gamma)$-invariant and no two distinct hyperplanes in $\H_i$ cross.
\end{rem}

\begin{rem}\label{rem:well-order on H_i(x,xi)}
    The order $\le$ in \eqref{eq:well-order on H_i(x,xi)} is a well-order. Indeed, for any $h,k\in \H_i(x,\xi)$ with $h \neq k$, we have $d_X(x,h) \neq d_X(x,k)$ since $h$ and $k$ don't cross by Remark \ref{rem:Hi}.
\end{rem}

\begin{figure}[htbp]
  \begin{center}
 \hspace{0mm} 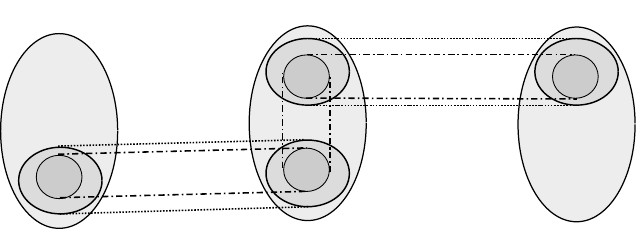
  \end{center}
   \vspace{-3mm}
  \caption{The proof of Lemma \ref{lem:A(Gamma) act H_N}}
  \label{smooth}
\end{figure}

In the proof of Lemma \ref{lem:A(Gamma) act H_N} below, we essentially use finiteness of $\Gamma$ to ensure $C_{\vec h}\neq\emptyset$ (see \eqref{eq:Cvech}).

\begin{lem}\label{lem:A(Gamma) act H_N}
    Define the action $A(\Gamma) \act \H^\NN$ by $(g,(h_n)_{n \in\NN})\mapsto (gh_n)_{n \in\NN}$, then its orbit equivalence relation $E_{A(\Gamma)}^{\H^\NN}$ is smooth.
\end{lem}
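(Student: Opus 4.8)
The goal is to show that $E_{A(\Gamma)}^{\H^\NN}$ is smooth, i.e., to build a Borel map $\Phi \colon \H^\NN \to Y$ into some standard Borel space $Y$ that is a complete invariant for the $A(\Gamma)$-orbit equivalence relation. The natural idea is to produce, from a sequence $(h_n)_{n\in\NN} \in \H^\NN$, a canonical ``normalization'' of the $A(\Gamma)$-orbit: since $A(\Gamma)$ acts freely and cocompactly on $X(\Gamma)$, an element $g$ is essentially determined by where it sends a fixed basepoint $1$, so the task reduces to selecting, for each orbit, a distinguished group element in a Borel way. Concretely, I would try to use the decomposition $\H = \bigsqcup_{i=1}^N \H_i$ from Remark \ref{rem:Hi} and the fact that, within each $\H_i$, no two hyperplanes cross, so $\H_i$ carries a tree-like (in fact total, after orienting) combinatorial order. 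For each $i$, the sequence $(h_n)$ determines which hyperplanes of $\H_i$ appear, and one can read off a coset of $\la v_i\ra$-type data; translating by $g$ shifts all of these coherently.

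**Key steps.** First, I would fix the basepoint $1 \in X(\Gamma)^{(0)}$ and, for each hyperplane $h$, record the pair $(\ell(h), d_X(1,h))$ where $\ell(h) \in V(\Gamma)$ is the label (Remark \ref{rem:RAAG}) together with the halfspace $\alpha_1(h)$ containing $1$; this is a Borel function of $h$. Second, given $(h_n)_n \in \H^\NN$, I would form for each label $i$ the set $S_i = \{h_n : \ell(h_n) = i\} \subset \H_i$ and note that, because hyperplanes in $\H_i$ pairwise do not cross (Remark \ref{rem:Hi}), $S_i$ sits in a linearly ordered ``spine'' and hence has a well-defined combinatorial ``first'' element relative to the basepoint side — or, if $S_i$ is empty or goes to infinity in both directions, a canonical fallback value. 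The crucial point, which is where finiteness of $\Gamma$ enters (as the paper flags before the statement, via the nonemptiness of the set $C_{\vec h}$), is that the tuple of these per-label data over $i=1,\dots,N$ pins down a single $A(\Gamma)$-translate: using Corollary \ref{cor:proj for conv sets RAAG} one sees that the relative positions of the hyperplanes with the $N$ different labels intersect in a nonempty ``box'' in $X(\Gamma)$, a single vertex or at least a canonical coset, and this gives the distinguished group element $g_{(h_n)}$. Third, I would define $\Phi((h_n)_n) = (g_{(h_n)}^{-1} h_n)_n$, i.e., translate the whole sequence back by this canonical element, and check that $\Phi$ is Borel (each step is a countable combinatorial selection, hence Borel, invoking Lusin–Novikov or Arsenin–Kunugui as needed to uniformize the choices of ``first'' hyperplane) and that $\Phi$ is constant on $A(\Gamma)$-orbits and separates distinct orbits — the latter because $g \mapsto g\cdot 1$ is injective on $A(\Gamma)$ by freeness (Remark \ref{rem:RAAG}), so two sequences with the same $\Phi$-value differ by the ratio of their canonical elements, which must then be trivial.

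**Main obstacle.** The hard part is showing the canonical group element $g_{(h_n)}$ is \emph{well-defined and nonempty} for every input sequence, uniformly and Borel-measurably. A priori the per-label ``first hyperplane'' data could be incompatible — the intersection of the halfspaces picked out by the $N$ labels might be empty if $\Gamma$ had infinite diameter or infinitely many labels; this is exactly why the statement restricts to finite $\Gamma$, and the argument must exploit that the set $C_{\vec h}$ (the common intersection / box described by the chosen halfspaces for each label) is a nonempty convex subcomplex, which follows from the Helly-type median-space property of $X(\Gamma)$ together with the hyperplane-combinatorics of Corollary \ref{cor:proj for conv sets RAAG}. A secondary subtlety is the Borel bookkeeping when $S_i$ is empty or unbounded in both directions: one must fix a measurable convention (e.g. default to the basepoint, or to the first index $n$ with $\ell(h_n)=i$) and verify that translating by $g$ permutes these conventions consistently, so that $\Phi$ remains an orbit invariant. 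Once these points are settled, smoothness of $E_{A(\Gamma)}^{\H^\NN}$ follows immediately since $\Phi$ is a Borel reduction to equality on $\H^\NN$.
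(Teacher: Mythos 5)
Your overall strategy (normalize each sequence by a canonical group element and reduce to equality) is in the right spirit, but the mechanism you propose for producing $g_{(h_n)}$ breaks down at exactly the points you defer. First, the ``box'': the convex cosets attached to the hyperplanes of the sequence (the sets $H_n$ of initial vertices of the dual edges) need not pairwise intersect, so intersecting per-label data read off from the sequence gives the empty set in general --- already for $A(\Gamma)=F_2$ the carrier data of two edges of the tree are typically disjoint single vertices --- and the Helly/median argument you invoke only applies to pairwise-intersecting convex sets, so Corollary \ref{cor:proj for conv sets RAAG} alone does not rescue it. The paper never intersects these cosets directly: it forms iterated gate projections between \emph{consecutive} carriers (via Definition \ref{def:rhoAB}, Lemma \ref{lem:grid between convex sets}, Corollaries \ref{cor:proj for conv sets} and \ref{cor:proj for conv sets RAAG}), obtaining a decreasing chain of cosets $C_n=c_n\langle V_n\rangle$ with $V_{n+1}\subset V_n\subset V(\Gamma)$, and finiteness of $V(\Gamma)$ makes the chain stabilize, which is how $C_{\vec h}=\bigcap_n C_n\neq\emptyset$ in \eqref{eq:Cvech} is actually obtained. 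Second, any selection that refers to the fixed basepoint $1$ (``first hyperplane of $S_i$ on the basepoint side'', or the data $d_X(1,h)$ and the halfspace containing $1$) is not compatible with the action: for $\vec k=g\vec h$ there is no reason the selected datum for $\vec k$ is the $g$-translate of that for $\vec h$, and then $\Phi(\vec h)=(g_{\vec h}^{-1}h_n)_n$ fails to be constant on orbits. What you call a secondary subtlety is the crux, not bookkeeping. (Also, ``first element of $S_i$ relative to the basepoint'' is not even well defined for an arbitrary subset of $\H_i$; the well-order \eqref{eq:well-order on H_i(x,xi)} is only available for hyperplanes separating a vertex from a fixed boundary point.)

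Even granting an equivariantly defined nonempty canonical coset, choosing a point $x_{\vec h}$ in it yields an orbit invariant only if you prove that for any two points $x,x'$ of that coset the element $x'x^{-1}$ fixes every $h_n$; equivalently, you must prove completeness of the invariant. This is precisely what the transition elements $(s_n)_{n\ge1}$ and $(t_n)_{n\ge2}$ in the paper's proof are for (they are well defined by Corollary \ref{cor:proj for conv sets RAAG}(1)): from any $x\in C_{\vec h}$ one recovers $h_n$ as the unique hyperplane with label $v_n$ dual to the edge issuing from $x\,s_1t_2s_2\cdots t_{n-1}s_{n-1}$, so two sequences with the same data are translates of one another. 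Nothing in your sketch performs this verification, and without it neither the constancy of $\Phi$ on orbits nor the injectivity on orbits is established. Note finally that the paper sidesteps the selection problem altogether: instead of choosing a canonical element, it maps $\vec h$ to the manifestly orbit-invariant tuple $((s_n),(t_n),(v_n))$ and proves that this tuple is a complete invariant; your normalization scheme could be repaired along these lines, but doing so amounts to reproducing that construction rather than replacing it.
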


\begin{proof}
    Define a map $f \colon \H^\NN \to (A(\Gamma)^\NN)^2 \times V(\Gamma)^\NN$ as follows. Let $\vec h =(h_n)_{n \in \NN} \in \H^\NN$. For each $n \in \NN$, let $v_n \in V(\Gamma)$ be the label of the hyperplane $h_n$ (see Remark \ref{rem:RAAG}) and let $h'_n$ be the set of all directed edges with label $v_n$ (not $v_n^{-1}$) that are dual to $h_n$. Define
    \begin{align*}
        H_n=\{e_- \in X(\Gamma)^{(0)} \mid e \in h'_n\}.
    \end{align*}
    Note that $H_n$ is convex since $H_n$ is a coset of the subgroup of $A(\Gamma)$ generated by the set $\{v \in V(\Gamma)\mid (v_n,v) \in E(\Gamma)\}$. For each $n \in \NN$, define $A_n \subset H_n$, $B_n \subset H_{n+1}$, $\psi_n \colon A_n \to B_n$, and $s_n \in A(\Gamma)$ by (see Definition \ref{def:rhoAB})
    \begin{align*}
        &A_n = \rho_{H_n}^{H_{n+1}},~~~~
        B_n = \rho_{H_{n+1}}^{H_n},~~~~
        \psi_n = \psi_{H_{n+1}}^{H_n},~~~~{\rm and}\\
        &s_n = x^{-1}\psi_n(x), {\rm ~~where~~} x \in A_n.
    \end{align*}
    Recall that $s_n$ doesn't depend on $x \in A_n$ by Corollary \ref{cor:proj for conv sets RAAG} (1). Define convex sets $(C_n)_{n \in \NN}$ and $(D_n)_{n \in \NN}$ in $X(\Gamma)^{(0)}$ and $(t_n)_{n\ge2} \in A(\Gamma)^\NN$ inductively as follows. 
    
    Set $C_1=A_1$ and $D_1=B_1$. Since $D_1$ and $A_2$ are convex by Corollary \ref{cor:proj for conv sets}, we can define $C'_2 \subset D_1$, $D'_2 \subset A_2$, $\varphi_2 \colon C'_2 \to D'_2$ and $t_2\in A(\Gamma)$ by (see Definition \ref{def:rhoAB} and Corollary \ref{cor:proj for conv sets RAAG})
    \begin{align*}
        &C'_2 = \rho_{D_1}^{A_2},~~~~
        D'_2 = \rho_{A_2}^{D_1},~~~~
        \varphi_2 = \psi_{A_2}^{D_1},~~~~{\rm and}\\
        &t_2 = x^{-1}\varphi_2(x), {\rm ~~where~~} x \in C'_2.
    \end{align*}
    Define $C_2 = \psi_1^{-1} (C'_2)$ and $D_2 = \psi_2 (D'_2)$. The sets $C_2$ and $D_2$ are convex by Corollary \ref{cor:proj for conv sets} and we have $C_2 \subset C_1=A_1$, $D_2 \subset  B_2$, and $D_2 = (\psi_2\circ\varphi_2\circ\psi_1) (C_2)$. By repeating this procedure (e.g. next, apply Definition \ref{def:rhoAB} to $D_2$ and $A_3$), we get convex sets $(C_n)_{n \ge 2}$, $(D_n)_{n \ge 2}$, $(C'_n)_{n \ge 2}$, and $(D'_n)_{n \ge 2}$ in $X(\Gamma)^{(0)}$, maps $(\varphi_n)_{n\ge 2}$, and $(t_n)_{n\ge 2}$ such that for any $n \ge 2$,
    \begin{align*}
        &C'_{n+1} = \rho_{D_n}^{A_{n+1}} \subset D_n,~~~~
        D'_{n+1} = \rho_{A_{n+1}}^{D_n} \subset A_{n+1},~~~~
        \varphi_{n+1}=\psi_{A_{n+1}}^{D_n} \colon C'_{n+1} \to D'_{n+1},~~~~\\
        &C_{n+1}=(\psi_n\circ\varphi_n \circ \cdots \circ \psi_2\circ\varphi_2\circ\psi_1)^{-1}(C'_{n+1}) \subset C_n,~~~~\\
        &D_{n+1}=\psi_{n+1}(D'_{n+1}) \subset B_{n+1},~~~~\\
        &t_{n+1}=x^{-1}\varphi_{n+1}(x), {\rm ~~where~~} x \in C'_{n+1}.
    \end{align*}   
    By Corollary \ref{cor:proj for conv sets RAAG} (2), there exist subsets $\{V_n\}_{n\in\NN}$ of $V(\Gamma)$ such that for any $n \in \NN$, $C_n$ is a coset of $\la V_n \ra$ and we have $V_{n+1} \subset V_n$. Since $V(\Gamma)$ is finite, there exists $N \in \NN$ such that for any $n \ge N$, $V_n=V_N$, which implies $C_n=C_N$. Define $C_{\vec h}$ by
    \begin{align}\label{eq:Cvech}
        C_{\vec h}=\bigcap_{n\in\NN}C_n,
    \end{align}
    then $C_{\vec h}\neq\emptyset$ by $C_{\vec h} = C_N$ and the set $(\psi_n\circ\varphi_n \circ \cdots\circ \psi_2\circ\varphi_2\circ\psi_1)(C_{\vec h})$ in $B_n$ is well-defined for any $n \ge 1$.
    
    Define $f(\vec h) = ((s_n)_{n \ge 1}, (t_n)_{n \ge 2}, (v_n)_{n\in\NN})$. We claim that for any $\vec h , \vec k \in \H^\NN$, we have $\vec h \,E_{A(\Gamma)}^{\H^\NN}\, \vec k \,\Leftrightarrow\, f(\vec h) = f(\vec k)$. The direction $\vec h \,E_{A(\Gamma)}^{\H^\NN}\, \vec k \Rightarrow f(\vec h) = f(\vec k)$ is straightforward since the action $A(\Gamma) \act X(\Gamma)$ preserves edge labels.

    We'll show the converse direction. Let $\vec  h =(h_n)_{n \in \NN}, \vec  k =(k_n)_{n \in \NN} \in \H^\NN$ satisfy $f(\vec h) = f(\vec k) = ((s_n)_{n \ge 1}, (t_n)_{n \ge 2}, (v_n)_{n\in\NN})$. Fix $x \in C_{\vec h}$ and $y \in C_{\vec k}$, which is possible by $C_{\vec h}, C_{\vec k} \neq \emptyset$ (see \eqref{eq:Cvech}). Define $g \in A(\Gamma)$ by $g=yx^{-1}$ (i.e. $gx=y$). Note that $h_1$ (resp. $k_1$) is the unique hyperplane dual to the edge whose initial vertex is $x$ (resp. $y$) and whose label is $v_1$. Hence, $gh_1=k_1$. For any $n \ge 2$, define $x_n,y_n\in A(\Gamma)$ by
    \begin{align*}
        x_n = x(s_1t_2s_2 \cdots t_{n-1}s_{n-1})
        {\rm ~~~and~~~}
        y_n = y(s_1t_2s_2 \cdots t_{n-1}s_{n-1}),
    \end{align*}
    then $h_n$ (resp. $k_n$) is the unique hyperplane dual to the edge whose initial vertex is $x_n$ (resp. $y_n$) and whose label is $v_n$. Hence, we have $gh_n=k_n$ by $gx_n=y_n$. Thus, $g\vec h = \vec k$.
\end{proof}

\begin{cor}\label{cor:E_n hyperfinite}
    For every $n \in \NN$, $F_n$ is a hyperfinite CBER (see \eqref{eq:Fn}).
\end{cor}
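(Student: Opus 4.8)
The plan is to deduce hyperfiniteness of $F_n$ from Theorem \ref{thm:DJK} and the smoothness result just proved in Lemma \ref{lem:A(Gamma) act H_N}, together with the permanence properties recorded in Section \ref{sec:Descriptive set theory}. First I would observe that, by Theorem \ref{thm:DJK}, the tail equivalence relation $E_t(\H)$ on $\H^\NN$ is a hyperfinite CBER, since $\H$ is a countable set (each $\H_i$ is a set of hyperplanes in a locally finite complex, and there are finitely many $i$). Consequently the product relation $\bigl(E_t(\H)\bigr)^n$ on $(\H^\NN)^n$ — two tuples are equivalent iff they are coordinatewise $E_t(\H)$-equivalent — is again hyperfinite: a finite product of hyperfinite CBERs is hyperfinite (this is standard; it also follows from Proposition \ref{prop:JKL} applied coordinate by coordinate, or directly by taking the product of the exhausting sequences of finite subrelations).

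Next, I would bring in the group action. Consider the action $A(\Gamma) \act (\H^\NN)^n$ given diagonally, $g\cdot(\alpha_i)_{i=1}^n = (g\alpha_i)_{i=1}^n$, and let $E$ be its orbit equivalence relation, which is a CBER since $A(\Gamma)$ is countable. The key point is that $F_n$ sits between $E_{\mathrm{diag}} := \bigl(E_t(\H)\bigr)^n$ and the join of $E_{\mathrm{diag}}$ with $E$; more precisely, $\alpha\,F_n\,\beta$ iff there is $g \in A(\Gamma)$ with $g\alpha_i \, E_t(\H)\, \beta_i$ for all $i$, i.e. $F_n$ is exactly the equivalence relation generated by $E_{\mathrm{diag}} \cup E$. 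So I would instead argue as follows. Let $\widehat F_n$ be the orbit equivalence relation of the action $A(\Gamma) \act (\H^\NN)^n$ on the quotient, but working directly on $(\H^\NN)^n$: define $f \colon (\H^\NN)^n \to (A(\Gamma)^\NN)^2 \times V(\Gamma)^\NN)^n$ coordinatewise using the map from Lemma \ref{lem:A(Gamma) act H_N}, so that $f(\alpha)=f(\beta)$ iff there is $g$ with $g\alpha_i = \beta_i$ for all $i$ (one global $g$, because Lemma \ref{lem:A(Gamma) act H_N}'s proof produces $g = yx^{-1}$ from the data, and matching $f$-values on all coordinates forces the same $g$ to work simultaneously). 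Thus the diagonal action $E$ is smooth.

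Now I would combine: $F_n$ is the equivalence relation generated by $E$ (smooth, hence hyperfinite) and $E_{\mathrm{diag}}$ (hyperfinite). By a theorem of Dougherty–Jackson–Kechris, a CBER that is an increasing union — or here, the join — of two hyperfinite CBERs need not be hyperfinite in general; but here the structure is better. Indeed, $F_n = E \vee E_{\mathrm{diag}}$, and I would argue that each $F_n$-class contains only countably many $E$-classes and apply the following cleaner route: the map $\alpha \mapsto (f_0(\alpha), [\alpha]_{E_{\mathrm{diag}}})$ shows $F_n$ reduces to an equivalence relation built from the smooth invariant and the tail classes. Concretely, I would instead directly show: the map sending $\alpha=(\alpha_i)_i$ to the tuple of ``normalized'' sequences obtained by applying a Borel selector is a countable-to-one Borel map $f \colon (\H^\NN)^n \to (\H^\NN)^n$ with $\alpha\,F_n\,\beta \Rightarrow f(\alpha)\,E_{\mathrm{diag}}\,f(\beta)$, whence Lemma \ref{lem:weak Borel homo} gives hyperfiniteness of $F_n$. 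The main obstacle is producing this $f$: one must, in a Borel way, use the smooth invariant from Lemma \ref{lem:A(Gamma) act H_N} to pick, within each $A(\Gamma)$-orbit, a canonical representative up to coordinatewise tail-equivalence — equivalently, to choose a Borel transversal of the smooth relation $E$ and translate each $\alpha$ to land on it. Since $E$ is smooth (Lemma \ref{lem:A(Gamma) act H_N} gives a Borel map whose level sets are the $E$-classes) and $A(\Gamma)$ is countable, the Lusin–Novikov uniformization (Theorem \ref{thm:Lusin-Novikov}) supplies a Borel choice of such a $g = g(\alpha) \in A(\Gamma)$ with $g(\alpha)\cdot\alpha$ lying in a fixed Borel transversal; setting $f(\alpha) = g(\alpha)\cdot\alpha$ makes $f$ Borel and countable-to-one, and if $\alpha\,F_n\,\beta$ then $f(\alpha)$ and $f(\beta)$ lie in the same transversal point's orbit and are forced to be coordinatewise tail-equivalent. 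Then Lemma \ref{lem:weak Borel homo} with $F := E_{\mathrm{diag}}$ (hyperfinite) finishes the proof, and since $f$ is countable-to-one the resulting $F_n$ is automatically a CBER.
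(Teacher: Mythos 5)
The central step of your argument has a genuine gap. The normalization map $f(\alpha)=g(\alpha)\cdot\alpha$, where $g(\alpha)$ translates $\alpha$ onto a fixed Borel transversal of the diagonal orbit relation $E$, is not a homomorphism from $F_n$ to $E_{\mathrm{diag}}=(E_t(\H))^n$. If $\alpha\,F_n\,\beta$ but $\alpha$ and $\beta$ lie in different $A(\Gamma)$-orbits (the typical case: already for $n=1$, take $\beta$ obtained from $\alpha$ by changing only the first hyperplane), then $f(\alpha)$ and $f(\beta)$ sit over different transversal points and are translates of $\alpha,\beta$ by unrelated group elements $g(\alpha),g(\beta)$; tail-equivalence of $\alpha_i$ and $\beta_i$ only gives tail-equivalence of $g\alpha_i$ and $g\beta_i$ for one and the same $g$, so there is no reason for $g(\alpha)\alpha_i$ and $g(\beta)\beta_i$ to be tail-equivalent. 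Your sentence ``$f(\alpha)$ and $f(\beta)$ lie in the same transversal point's orbit and are forced to be coordinatewise tail-equivalent'' is simply false in this situation, so the hypothesis of Lemma \ref{lem:weak Borel homo} is not verified and the proof does not close. A secondary error: applying the invariant of Lemma \ref{lem:A(Gamma) act H_N} coordinatewise does not detect a single global $g$; equal values only produce elements $g_i$ with $g_i\alpha_i=\beta_i$ for each $i$ separately, and these need not coincide (e.g.\ when the stabilizer of $\alpha_1$ is trivial), so your claimed smoothness of the diagonal action is not justified by that map.

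What is missing is the idea of applying the smooth invariant to all tails, which is how the paper proceeds. With $f$ the Borel map from Lemma \ref{lem:A(Gamma) act H_N} into a standard Borel space $X$, set $\varphi(\vec h)=\big(f((h_i)_{i\ge n})\big)_{n\in\NN}\in X^\NN$; then $\vec h\,F_1\,\vec k\iff\varphi(\vec h)\,E_t(X)\,\varphi(\vec k)$, so $F_1$ is hypersmooth by Theorem \ref{thm:DJK} (which applies to tail relations over an arbitrary standard Borel space, not only a countable one), and being countable it is hyperfinite by \cite[Theorem 5.1]{DJK94}. For general $n$ one never needs to control the single global $g$: one has $F_n\subset F_1\times\cdots\times F_1$, finite products of hyperfinite CBERs are hyperfinite, and a Borel subrelation of a hyperfinite CBER is hyperfinite. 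Your opening observation that $(E_t(\H))^n$ is hyperfinite is correct but not sufficient on its own, since $F_n$ strictly contains that relation.
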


\begin{proof}
    First, we show that $F_1$ is hyperfinite (see \eqref{eq:Fn}). By Lemma \ref{lem:A(Gamma) act H_N}, there exist a standard Borel space $X$ and a Borel map $f \colon \H^\NN \to X$ such that $\vec h \, E_{A(\Gamma)}^{\H^\NN} \, \vec k \Leftrightarrow f(\vec h) = f(\vec k)$ for any $\vec h, \vec k \in \H^\NN$. Define a map $\varphi \colon \H^\NN \to X^\NN$ as follows; for any $\vec h = (h_n)_{n \in \NN} \in \H^\NN$,
    \begin{align*}
        \varphi(\vec h) = (f((h_i)_{i \ge n}))_{n \in \NN}.
    \end{align*}
    For any $\vec h, \vec k \in \H^\NN$, we have $\vec h\,F_1\, \vec k \Leftrightarrow \varphi(\vec h) \, E_t(X)\, \varphi(\vec k)$. Hence, $F_1$ is hypersmooth by Theorem \ref{thm:DJK}. Since $F_1$ is countable as well, $F_1$ is hyperfinite by \cite[Theorem 5.1 (1)$\,\Leftrightarrow\,$(3)]{DJK94}.
    
    Next, let $n \in \NN$. Since $F_1$ is hyperfinite, $F_1\times \cdots\times F_1$ on $(\H^\NN)^n$ is hyperfinite by \cite[Proposition 5.2 (5)]{DJK94}. By $F_n \subset F_1\times \cdots\times F_1$ and \cite[Proposition 5.2 (1)]{DJK94}, $F_n$ is hyperfinite.
\end{proof}

\begin{lem}\label{lem:well-order on H_i(x,xi)}
    Let $x,y\in X(\Gamma)^{(0)}$, $\xi \in \partial_\R X(\Gamma)$, and $i \in \{1,\ldots,N\}$. Then, for any $h,k\in \H_i(x,\xi)\cap \H_i(y,\xi)$, we have $d_X(x,h) < d_X(x,k) \iff d_X(y,h) < d_X(y,k)$.
\end{lem}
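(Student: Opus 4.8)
The plan is to observe that the well-order on $\H_i(x,\xi)$ induced by $d_X(x,\cdot)$ is determined entirely by $\xi$ together with the nesting of the relevant halfspaces, with no further dependence on the base vertex; running the same observation for $y$ then forces the two orders to agree on $\H_i(x,\xi)\cap\H_i(y,\xi)$. So fix $h,k\in\H_i(x,\xi)\cap\H_i(y,\xi)$, and assume $h\ne k$ (otherwise there is nothing to prove). For a hyperplane $m$ separating $x$ from $\xi$, let $\xi(m)\in\{m^-,m^+\}$ denote the halfspace containing $\xi$; since $m$ separates $x$ and $\xi$ we have $x\notin\xi(m)$, and if $m$ also separates $y$ and $\xi$, then likewise $y\notin\xi(m)$.

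First I would establish that $\xi(h)$ and $\xi(k)$ are nested. By Remark \ref{rem:Hi} the hyperplanes $h$ and $k$ do not cross, so (as is standard for two distinct non-crossing hyperplanes of a CAT(0) cube complex) exactly one of the four intersections of a halfspace of $h$ with a halfspace of $k$ is empty. The empty one cannot be $\xi(h)\cap\xi(k)$, which is nonempty by the consistency condition defining the element $\xi\in\R X$, nor the intersection of the two halfspaces not containing $\xi$, which contains $x$. Hence the empty quarter is the intersection of $\xi(h)$ with the non-$\xi$ halfspace of $k$, or vice versa; correspondingly, exactly one of $\xi(h)\subsetneq\xi(k)$ or $\xi(k)\subsetneq\xi(h)$ holds (the inclusions are proper because $h\ne k$).

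Next I would prove $d_X(x,h)<d_X(x,k)\iff\xi(k)\subsetneq\xi(h)$. For $m\in\H_i(x,\xi)$ set $S_x(m)=\{\ell\in\H\mid \ell\text{ separates }x\text{ and }\xi(m)\}$; since $\xi(m)$ is convex, Remark \ref{rem:distance of convex sets} gives $d_X(x,\xi(m))=\#S_x(m)$, and the order induced by $d_X(x,\cdot)$ on $\H_i(x,\xi)$ coincides with that induced by $d_X(x,\xi(\cdot))$, hence by $\#S_x(\cdot)$ (an elementary fact relating distances to hyperplanes and to their halfspaces). Now if $\xi(k)\subseteq\xi(h)$, then any hyperplane separating $x$ from $\xi(h)$ separates $x$ from $\xi(k)$ as well, so $S_x(h)\subseteq S_x(k)$; and if the inclusion $\xi(k)\subsetneq\xi(h)$ is proper, the nonempty quarter from the previous step provides a vertex of $\xi(h)$ lying on the same side of $k$ as $x$, whence $k\in S_x(k)\setminus S_x(h)$ and $S_x(h)\subsetneq S_x(k)$ is proper. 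Thus $\xi(k)\subsetneq\xi(h)$ implies $d_X(x,h)<d_X(x,k)$; combined with the dichotomy of the previous step (and Remark \ref{rem:well-order on H_i(x,xi)}), this is an equivalence.

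Finally, the right-hand side $\xi(k)\subsetneq\xi(h)$ of this equivalence does not mention $x$. Since $h,k\in\H_i(y,\xi)$ as well, so that $y\notin\xi(h)$, $y\notin\xi(k)$ and the same halfspaces occur, repeating the two preceding steps with $y$ in place of $x$ yields $d_X(y,h)<d_X(y,k)\iff\xi(k)\subsetneq\xi(h)$; chaining the two equivalences gives the lemma. I expect the main obstacle to be the third step: pinning down the correct direction of the nesting and, in particular, verifying the strict inclusion $S_x(h)\subsetneq S_x(k)$ via the empty-quarter analysis; the nesting dichotomy in the second step and the symmetric reduction to $y$ are then routine.
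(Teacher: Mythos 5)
Your proposal is correct and takes essentially the same route as the paper: both proofs hinge on the fact that, since $h,k\in\H_i$ are distinct and do not cross (Remark \ref{rem:Hi}) and both separate the basepoint from $\xi$, their $\xi$-halfspaces are nested, and the direction of nesting --- a condition not involving the basepoint --- is what decides which hyperplane is closer. The paper states this more tersely (if $d_X(x,h)<d_X(x,k)$ then $k$ lies on the $\xi$-side of $h$, hence on the opposite side of $h$ from $y$ as well), whereas you make the basepoint-free criterion explicit via the separating-hyperplane count; there is no gap.
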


\begin{proof}
    Suppose $d_X(x,h) < d_X(x,k)$, then $k$ is on the opposite halfspace of $h$ from $x$. Also, both $x$ and $y$ are on the opposite halfspace of $h$ from $\xi$. Hence, $k$ and $\xi$ are on the same halfspace of $h$ and hence $y$ is on the opposite halfspace of $h$ from $k$. Thus, $d_X(y,h) < d_X(y,k)$. The converse direction follows by swapping $x$ and $y$.
\end{proof}

\begin{prop}\label{prop:hyperfinite raag}
     The CBER $E_{A(\Gamma)}^{\partial_\R X(\Gamma)}$ induced by the action $A(\Gamma) \act \partial_\R X(\Gamma)$ is hyperfinite.
\end{prop}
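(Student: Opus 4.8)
The plan is to identify $E_{A(\Gamma)}^{\partial_\R X(\Gamma)}$, after breaking the boundary into finitely many invariant Borel pieces, with a weak Borel homomorphic image of one of the relations $F_n$, and then invoke Lemma~\ref{lem:weak Borel homo} together with Corollary~\ref{cor:E_n hyperfinite}. Fix the basepoint $o=1\in A(\Gamma)=X(\Gamma)^{(0)}$. A point $\xi\in\partial_\R X(\Gamma)$ is completely determined by the set of hyperplanes separating $o$ from it, that is, by $\H(o,\xi)=\bigsqcup_{i=1}^{N}\H_i(o,\xi)$, so the idea is to record the pieces $\H_i(o,\xi)$. By Remark~\ref{rem:Hi} no two hyperplanes in $\H_i$ cross, so (Remark~\ref{rem:well-order on H_i(x,xi)}) $h\mapsto d_X(o,h)$ embeds $\H_i(o,\xi)$ order-isomorphically into $\NNo$; hence $\H_i(o,\xi)$ is either finite or of order type $\omega$ for the well-order \eqref{eq:well-order on H_i(x,xi)}. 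For $S\subseteq\{1,\dots,N\}$ set $P_S=\{\xi\in\partial_\R X(\Gamma)\mid \forall i,\ \H_i(o,\xi)\text{ is infinite}\iff i\in S\}$. Each $P_S$ is Borel, since ``$\#\H_i(o,\xi)\ge m$'' is a countable union of clopen cylinder conditions and so ``$\H_i(o,\xi)$ infinite'' is $G_\delta$; and each $P_S$ is $A(\Gamma)$-invariant, because $\H_i(o,g\xi)=g\,\H_i(g^{-1}o,\xi)$ while $\H_i(g^{-1}o,\xi)\,\triangle\,\H_i(o,\xi)$ is contained in the finite set $\H_i(g^{-1}o,o)$ (Remark~\ref{rem:distance of convex sets}), so finiteness of $\H_i(o,\cdot)$ in each direction is constant along orbits. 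As $\partial_\R X(\Gamma)=\bigsqcup_S P_S$ is a finite partition into invariant Borel sets and hyperfiniteness is preserved by finite disjoint unions over invariant Borel sets, it suffices to prove that $E:=E_{A(\Gamma)}^{\partial_\R X(\Gamma)}$ restricted to each $P_S$ is hyperfinite.

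If $S=\emptyset$, then $\xi\mapsto\H(o,\xi)$ embeds $P_\emptyset$ into the countable set of finite subsets of $\H$, so $P_\emptyset$ is countable and $E|_{P_\emptyset}$ is hyperfinite. So fix a nonempty $S$, which we relabel as $S=\{1,\dots,n\}$, and define $\Psi\colon P_S\to(\H^\NN)^n$ by letting $\Psi(\xi)_i$ be the $\le$-increasing enumeration of the infinite set $\H_i(o,\xi)$. I would check that $\Psi$ is Borel, using that ``the $k$-th smallest element of $\H_i(o,\xi)$ equals $h$'' is a Boolean combination of the cylinder conditions ``$h'$ separates $o$ from $\xi$'' over the fixed \emph{finite} set $\{h'\in\H_i\mid d_X(o,h')\le d_X(o,h)\}$, hence clopen. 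And $\Psi$ is countable-to-$1$: if $\Psi(\xi)=\Psi(\eta)$ then $\H_i(o,\xi)=\H_i(o,\eta)$ for all $i\in S$, so $\xi$ and $\eta$ agree except possibly in the directions $i\notin S$, where each $\H_i(o,\cdot)$ is one of only countably many finite subsets of $\H_i$; thus $\Psi^{-1}(\Psi(\xi))$ is countable.

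It remains to show that $\Psi$ carries $E|_{P_S}$ into $F_n$, i.e.\ $\xi\,E\,\eta\Rightarrow\Psi(\xi)\,F_n\,\Psi(\eta)$. Suppose $\eta=g\xi$ with $g\in A(\Gamma)$ and fix $i\in S$. Write $P=\H_i(o,\xi)$, $Q=\H_i(g^{-1}o,\xi)$, let $\alpha$ be the $\le$-increasing (i.e.\ $d_X(o,\cdot)$-increasing) enumeration of $P$ and $\beta$ the $d_X(g^{-1}o,\cdot)$-increasing enumeration of $Q$. Then $P\,\triangle\,Q\subseteq\H_i(g^{-1}o,o)$ is finite (a hyperplane in $P\setminus Q$ separates $o$ from $g^{-1}o$, and symmetrically), and by Lemma~\ref{lem:well-order on H_i(x,xi)} the $d_X(o,\cdot)$- and $d_X(g^{-1}o,\cdot)$-orders coincide on $P\cap Q$; hence, after deleting finite prefixes, $\alpha$ and $\beta$ become the same increasing enumeration of a common cofinal subset of $P\cap Q$, so $\alpha\,E_t(\H)\,\beta$. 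Now $g$ acts on $\H$ as a bijection taking the $d_X(g^{-1}o,\cdot)$-order to the $d_X(o,\cdot)$-order (isometry invariance of $d_X$), so $g\beta$ is precisely the $\le$-increasing enumeration of $gQ=\H_i(o,g\xi)=\Psi(\eta)_i$, while $g\alpha=g\,\Psi(\xi)_i$ is $E_t(\H)$-equivalent to $g\beta=\Psi(\eta)_i$ since $E_t(\H)$ is invariant under the coordinatewise action of $g$. As the single element $g$ works for all $i\in S$ at once, $\Psi(\xi)\,F_n\,\Psi(\eta)$. By Corollary~\ref{cor:E_n hyperfinite} the CBER $F_n$ is hyperfinite, so Lemma~\ref{lem:weak Borel homo} gives that $E|_{P_S}$ is hyperfinite, and combining over the finitely many $S$ completes the proof. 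I expect the main subtlety to be exactly the ``finite directions'' $i\notin S$: these cannot be encoded into $\H^\NN$ in an $A(\Gamma)$-equivariant way (a constant sequence at a hyperplane $h_0$ is not tail-equivalent to the constant sequence at $gh_0$), which forces the passage to the invariant partition $\{P_S\}$ and the discarding of those coordinates, legitimate only because each of them carries merely countably much information; the remaining points (Borelness and countable-to-$1$-ness of $\Psi$, and the finite-shift comparison of two near-equal increasing enumerations) are routine.
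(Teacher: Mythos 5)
Your proposal is correct and follows essentially the same route as the paper: partition the boundary into the invariant Borel pieces determined by which label classes $\H_i(o,\xi)$ are infinite, send each piece into $(\H^\NN)^{|S|}$ via the distance-increasing enumerations, check this is a countable-to-1 Borel homomorphism into $F_{|S|}$ using Lemma \ref{lem:well-order on H_i(x,xi)} and the finiteness of $\H_i(o,\xi)\triangle\H_i(go,\xi)$, and conclude via Corollary \ref{cor:E_n hyperfinite} and Lemma \ref{lem:weak Borel homo}. The only cosmetic difference is that you treat the piece $S=\emptyset$ separately (it is in fact empty, since a Roller boundary point is separated from $o$ by infinitely many hyperplanes), whereas the paper simply indexes over non-empty subsets.
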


\begin{proof}
    Fix $o \in X(\Gamma)^{(0)}$. Let $\I$ be the set of all non-empty subsets of $\{1,\ldots,N\}$. For $I\in \I$, define $\partial_\R X(\Gamma)_I$ by
    \begin{align*}
        \partial_\R X(\Gamma)_I = \{\xi \in \partial_\R X(\Gamma) \mid i\in I \iff |\H_i(o,\xi)|=\infty\}.
    \end{align*}
    $\partial_\R X(\Gamma)_I$ is $A(\Gamma)$-invariant since the action $A(\Gamma) \act X(\Gamma)$ preserves labels of edges. For any $i \in \{1,\ldots,N\}$ and $\xi \in \partial_\R X(\Gamma)$, we have
    \begin{align*}
        |\H_i(o,\xi)|<\infty \iff \text{$\exists\,H \subset \H_i$ : finite, $\big[h \in H \iff h \in \H(o,\xi)\big]$}.
    \end{align*}
    Hence, we can see that $\partial_\R X(\Gamma)_I$ is Borel for any $I\in \I$. Also, $\partial_\R X(\Gamma) = \bigsqcup_{I \in \I}\partial_\R X(\Gamma)_I$. Thus, it's enough to show that $E_{A(\Gamma)}^{\partial_\R X(\Gamma)}|_{\partial_\R X(\Gamma)_I}$ is hyperfinite for any $I \in \I$. 
    
    Let $I \in \I$. Define a map $f \colon \partial_\R X(\Gamma)_I \to (\H^\NN)^I$ as follows. Let $\xi \in \partial_\R X(\Gamma)_I$. For each $i \in I$, align elements of $\H_i(o,\xi)$ by the well-order as in \eqref{eq:well-order on H_i(x,xi)}, which we denote $\H_i(o,\xi) = \{h_{i,1}<h_{i,2}<\cdots\}$ (see Remark \ref{rem:well-order on H_i(x,xi)}). Define $f(\xi)=\big((h_{i,n})_{n \in \NN}\big)_{i \in I}$. It's not difficult to see that the map $f$ is Borel.

    If $\xi,\eta \in \partial_\R X(\Gamma)_I$ satisfy $f(\xi) = f(\eta)$, then the symmetric difference $\H(o,\xi)\triangle \H(o,\eta)$ is finite. Thus, $f$ is countable-to-1 since $\H$ is countable.

    Let $\xi,\eta \in \partial_\R X(\Gamma)_I$ satisfy $\xi\,E_{A(\Gamma)}^{\partial_\R X(\Gamma)}\,\eta$, then there exists $g \in A(\Gamma)$ such that $g \xi = \eta$. Let $f(\xi)=\big((h_{i,n})_{n \in \NN}\big)_{i \in I}$ and $f(\eta)=\big((k_{i,n})_{n \in \NN}\big)_{i \in I}$. Given $i \in I$, we have $\H_i(go,\eta) = \H_i(go,g\xi)=\{gh_{i,1}<gh_{i,2}<\cdots\}$ for any $i \in I$ since the action $A(\Gamma) \act X(\Gamma)$ preserves edge labels. By $|\H_i(o,\eta)\triangle \H_i(go,\eta)|< \infty$ and Lemma \ref{lem:well-order on H_i(x,xi)}, we can see $(k_{i,n})_{n \in \NN} \,E_t(\H)\, (gh_{i,n})_{n \in \NN}$. Thus, $f(\xi) \,F_{|I|}\, f(\eta)$ (see \eqref{eq:Fn}). By Lemma \ref{lem:weak Borel homo} and Corollary \ref{cor:E_n hyperfinite}, $E_{A(\Gamma)}^{\partial_\R X(\Gamma)_I}$ is hyperfinite, hence so is $E_{A(\Gamma)}^{\partial_\R X(\Gamma)}$.
\end{proof}

\begin{rem}
    Let $\G$ be the graph on $\R X(\Gamma)$ as in \eqref{eq:Borel graph on RX}. Since $\G$ is $A(\Gamma)$-invariant, the group $A(\Gamma)$ acts on the quotient $\partial_\R X(\Gamma) / \G$ (i.e. $\partial_\R X(\Gamma) / \G$ is the set of all connected components of $\G$ except $X(\Gamma)^{(0)}$). By \cite[Theorem 1.1]{Oya25}, $\partial_\R X(\Gamma) / \G$ is a standard Borel space. In the same way as the proof of Proposition \ref{prop:hyperfinite raag}, we can show that $E_{A(\Gamma)}^{\partial_\R X(\Gamma)/ \G}$ is hyperfinite, which implies hyperfiniteness of $E_{A(\Gamma)}^{\partial_\R X(\Gamma)}$ by Lemma \ref{lem:weak Borel homo} since the quotient map $\partial_\R X(\Gamma) \to \partial_\R X(\Gamma) / \G$ is $A(\Gamma)$-equivariant and countable-to-1.
\end{rem}

\section{Proof of Theorem \ref{thm:main}}\label{sec:Proof of main theorem}

The goal of this section is to prove Theorem \ref{thm:main}. In fact, we prove it in a more general form, which is Theorem \ref{thm:hyperfinite of virt special groups} below. Note that in Theorem \ref{thm:hyperfinite of virt special groups}, $X$ is countable since we can show that any finite dimensional CAT(0) cube complex with countably many hyperplanes is countable by induction on the dimension. Therefore, $\partial_\R X$ is a Polish space since it is a $G_\delta$ subset of the compact metrizable space $\R X$.

\begin{thm}\label{thm:hyperfinite of virt special groups}
    Let $X$ be a CAT(0) cube complex and $G$ be a countable group acting cubically on $X$. Suppose that there exists a finite index subgroup $H$ of $G$ such that $H \act X$ is free and the quotient $X/H$ is a special cube complex with finitely many immersed hyperplanes. Then, $E_G^{\partial_\R X}$ is hyperfinite.
\end{thm}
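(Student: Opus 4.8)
The plan is to reduce from $G$ to the finite-index subgroup $H$, transport the action $H\act\partial_\R X$ into the right-angled Artin group setting by means of a convex embedding, and then quote Proposition \ref{prop:hyperfinite raag}. Since (as observed above) $X$ is countable and $\partial_\R X$ is Polish, and $G$ acts on $\partial_\R X$ by homeomorphisms, $E_G^{\partial_\R X}$ and $E_H^{\partial_\R X}$ are CBERs with $E_H^{\partial_\R X}\subseteq E_G^{\partial_\R X}$. Because $[G:H]<\infty$, every $E_G^{\partial_\R X}$-class is a union of at most $[G:H]$ many $E_H^{\partial_\R X}$-classes, so by Proposition \ref{prop:JKL} it suffices to prove that $E_H^{\partial_\R X}$ is hyperfinite, and we work with $H$ from now on.

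Set $Y=X/H$. As $H\act X$ is free, $Y$ is a NPC cube complex whose universal cover is $X$, with $H\cong\deck(Y)\cong\pi_1(Y)$ (Remark \ref{rem:Deck transformation}), and $Y$ is special with finitely many immersed hyperplanes by hypothesis. By Theorem \ref{thm:special iff raag} there are a finite simple graph $\Gamma$ and a local isometry $\psi\colon Y\to R(\Gamma)$; lifting it gives $\widetilde\psi\colon X\to X(\Gamma)$, which by Lemma \ref{lem:loc iso implies convex emb} is an isomorphism of $X$ onto a convex subcomplex of $X(\Gamma)$, and which by Remark \ref{rem:maps between covering spaces} is equivariant for the injective homomorphism $\rho:=\psi_*\colon H\to A(\Gamma)$, i.e. $\widetilde\psi(gz)=\rho(g)\widetilde\psi(z)$ for all $g\in H$, $z\in X$.

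The technical core is to extend $\widetilde\psi$ to the Roller compactifications. Since $\widetilde\psi(X)$ is a convex subcomplex, each $h\in\H(X)$ extends to a unique hyperplane $\iota(h)\in\H(X(\Gamma))$ crossing $\widetilde\psi(X)$, namely the one dual to $\widetilde\psi(e)$ for any edge $e$ dual to $h$ (independence of $e$ holds since $\widetilde\psi$ carries $2$-cubes to $2$-cubes), and $\iota$ is $\rho$-equivariant. Define $f$ from $\prod_{h\in\H(X)}\{h^-,h^+\}$ to $\prod_{k\in\H(X(\Gamma))}\{k^-,k^+\}$ coordinatewise: for $k=\iota(h)$ let $f(\alpha)(k)$ be the halfspace of $k$ whose intersection with $\widetilde\psi(X)$ equals $\widetilde\psi(\alpha(h))$, and for $k$ disjoint from $\widetilde\psi(X)$ let $f(\alpha)(k)$ be the unique halfspace of $k$ containing $\widetilde\psi(X)$. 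Each coordinate of $f$ depends on a single coordinate of the input (or none), so $f$ is continuous, hence Borel; it is injective because $\widetilde\psi$ is injective, and $\rho$-equivariant because $\iota$ and $\widetilde\psi$ are. One must check $f(\R X)\subseteq\R X(\Gamma)$: for $\alpha\in\R X$ and $k=\iota(h)$, $k'=\iota(h')$, the set $\widetilde\psi(\alpha(h))\cap\widetilde\psi(\alpha(h'))=\widetilde\psi(\alpha(h)\cap\alpha(h'))$ is nonempty and contained in $f(\alpha)(k)\cap f(\alpha)(k')$, while if $k$ or $k'$ misses $\widetilde\psi(X)$ the relevant halfspace contains $\widetilde\psi(X)$ and the intersection is again nonempty. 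Finally $f(\partial_\R X)\subseteq\partial_\R X(\Gamma)$: if $f(\alpha)=\alpha_w$ for some $w\in X(\Gamma)^{(0)}$, let $v_0\in X^{(0)}$ be such that $\widetilde\psi(v_0)$ is the $0$-cube of $\widetilde\psi(X)$ closest to $w$ (Remark \ref{rem:distance of convex sets}); since any hyperplane separating $w$ from $\widetilde\psi(v_0)$ must separate $w$ from all of $\widetilde\psi(X)$ and hence cannot cross $\widetilde\psi(X)$, comparing sides of $\iota(h)$ gives $\alpha(h)=\alpha_{v_0}(h)$ for every $h\in\H(X)$, so $\alpha=\alpha_{v_0}\notin\partial_\R X$, a contradiction.

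Restricting, $f\colon\partial_\R X\to\partial_\R X(\Gamma)$ is a Borel injection with $\alpha\,E_H^{\partial_\R X}\,\beta\Rightarrow f(\alpha)\,E_{A(\Gamma)}^{\partial_\R X(\Gamma)}\,f(\beta)$, since $g\alpha=\beta$ with $g\in H$ forces $\rho(g)f(\alpha)=f(\beta)$. As $E_{A(\Gamma)}^{\partial_\R X(\Gamma)}$ is hyperfinite by Proposition \ref{prop:hyperfinite raag}, Lemma \ref{lem:weak Borel homo} yields that $E_H^{\partial_\R X}$ is hyperfinite, and the reduction in the first paragraph completes the proof. I expect the main obstacle to be precisely the verification that $f$ lands in $\R X(\Gamma)$ and in its boundary — that the consistency condition defining the Roller compactification is preserved under extension of a convex embedding — while the measurability and equivariance bookkeeping is routine once $\iota$ is in place.
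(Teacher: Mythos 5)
Your proposal is correct and follows essentially the same route as the paper: reduce to the finite-index subgroup $H$ via Proposition \ref{prop:JKL}, use Theorem \ref{thm:special iff raag} and Lemma \ref{lem:loc iso implies convex emb} to embed $X$ convexly and $\psi_*$-equivariantly into $X(\Gamma)$, push the boundary into $\partial_\R X(\Gamma)$, and conclude from Proposition \ref{prop:hyperfinite raag}. The only differences are cosmetic: you spell out the hyperplane-by-hyperplane construction of the induced boundary map (which the paper merely asserts) and finish with Lemma \ref{lem:weak Borel homo} instead of the paper's identity \eqref{eq:psi induces Borel red} combined with restriction arguments from \cite{DJK94}.
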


\begin{proof}
    Since $H$ is a finite index subgroup of $G$, we have $E_H^{\partial_\R X} \subset E_G^{\partial_\R X}$ and every $E_G^{\partial_\R X}$-class contains only finitely many $E_H^{\partial_\R X}$-classes. Hence, it's enough to show that $E_H^{\partial_\R X}$ is hyperfinite by Proposition \ref{prop:JKL}.
    
    Set $Y = X/H$. By Theorem \ref{thm:special iff raag}, there exist a finite simple graph $\Gamma$ and a local isometry $\psi \colon Y \to R(\Gamma)$ (see Definition \ref{def:RAAG}). Note that $X$ and $X(\Gamma)$ are the universal cover of $Y$ and $R(\Gamma)$ respectively (see Remark \ref{rem:RAAG} and Remark \ref{rem:Deck transformation}). Fix $y \in Y^{(0)}$ and $o \in R(\Gamma)^{(0)}$ satisfying $\psi(y) = o$ and define the actions $\pi_1(Y,y) \act X$ and $\pi_1(R(\Gamma),o) \act X(\Gamma)$ as in Remark \ref{rem:Deck transformation}. The local isometry $\psi$ induces the embedding $\psi_* \colon \pi_1(Y,y) \to \pi_1(R(\Gamma),o)$. By Remark \ref{rem:RAAG} and Remark \ref{rem:Deck transformation}, we have 
    \begin{align}\label{eq:E H R X}
        E_H^{\partial_\R X} = E_{\pi_1(Y,y)}^{\partial_\R X}
        {\rm~~~and~~~}
        E_{A(\Gamma)}^{\partial_\R X(\Gamma)} = E_{\pi_1(R(\Gamma),o)}^{\partial_\R X(\Gamma)}.
    \end{align}
    There exists a lift $\widetilde \psi \colon X \to X(\Gamma)$ of $\psi$ since $X$ is simply connected. By Lemma \ref{lem:loc iso implies convex emb}, the map $\widetilde \psi$ is an embedding and $\widetilde\psi(X)$ is a convex subcomplex. Hence, $\widetilde \psi$ induces the injection $\widehat \psi \colon \partial_\R X \to \partial_\R X(\Gamma)$.
    By Remark \ref{rem:maps between covering spaces}, $\widehat \psi (\partial_\R X)$ is invariant under the action of $\psi_*(\pi_1(Y,y))$ and we have
    \begin{align}\label{eq:psi induces Borel red}
        (\widehat \psi \times \widehat \psi)(E_{\pi_1(Y,y)}^{\partial_\R X}) = E_{\psi_*(\pi_1(Y,y))}^{\widehat \psi (\partial_\R X)}.
    \end{align}
    By \eqref{eq:E H R X} and Proposition \ref{prop:hyperfinite raag}, $E_{\pi_1(R(\Gamma),o)}^{\partial_\R X(\Gamma)}$ is hyperfinite. Hence, $E_{\psi_*(\pi_1(Y,y))}^{\widehat \psi (\partial_\R X)}$ is hyperfinite by \cite[Proposition 5.2 (1) and (3)]{DJK94}. Hence, by \eqref{eq:E H R X} and \eqref{eq:psi induces Borel red}, $E_H^{\partial_\R X}$ is hyperfinite.
\end{proof}

\begin{proof}[Proof of Theorem \ref{thm:main}]
    This follows from Theorem \ref{thm:hyperfinite of virt special groups} since every compact NPC cube complex has only finitely many immersed hyperplanes.
\end{proof}

\begin{rem}
    Note that in Theorem \ref{thm:hyperfinite of virt special groups}, the group $G$ doesn't have to be finitely generated. We can actually construct a special cube complex with finitely many hyperplanes whose fundamental group is not finitely generated by modifying the Wise's construction in \cite[Remark 3.11]{CS11} as follows. Let $A$ be the graph obtained by taking two copies $R_1$ and $R_2$ of the real line $\RR$ and gluing together along the integers ($A$ becomes a bi-infinite chain of bigons). We attach two bi-infinite rectangular strips $S_1$ and $S_2$ to $A$ as follows, where each strip $S_i$ ($i=1,2$) is identified as $\RR \times [0,1]$ for the labeling purpose. The first strip $S_1$ is attached by gluing the top side $[n,n+1]\times\{1\}$ of $S_1$ to the line segment $[n,n+1]$ of $R_1$ in $A$ and gluing the bottom side $[n,n+1]\times \{0\}$ of $S_1$ to the line segment $[n+1,n+2]$ of $R_2$ in $A$ for each $n \in \ZZ$. The second strip $S_2$ is attached by gluing the top side $[n,n+1] \times \{1\}$ of $S_2$ to the line segment $[n+1,n+2]$ of $R_1$ in $A$ and gluing the bottom side $[n,n+1] \times \{0\}$ of $S_2$ to the line segment $[n,n+1]$ of $R_2$ in $A$ for each $n \in \ZZ$. In this way, we obtain a special cube complex $X$ with exactly 4 hyperplanes and whose fundamental group is the free group $F_\infty$ with countably many generators (note that $X$ is homeomorphic to the cube complex in \cite[Remark 3.11]{CS11}).
\end{rem}

We finish this section with 4 immediate yet interesting corollaries of Theorem \ref{thm:main}.

\begin{cor}\label{cor:cubulated hyp. gp}
    If a hyperbolic group $G$ acts on a CAT(0) cube complex $X$ properly and cocompactly, then $E_G^{\partial_\R X}$ is hyperfinite.
\end{cor}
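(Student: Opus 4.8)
The plan is to reduce immediately to Theorem \ref{thm:main}; the only point to verify is that a proper cocompact action of a hyperbolic group on a CAT(0) cube complex is automatically virtually special in the sense of Definition \ref{def:virtually special group}. First I would record the elementary observations that such an action forces $X$ to be finite dimensional and locally finite (there are only finitely many $G$-orbits of cubes, and the stabilizer of each cube is finite), so in particular $G$ is finitely generated, hence countable, and $X$ is countable, so $\partial_\R X$ and $E_G^{\partial_\R X}$ are of the expected type.

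The key input is Agol's theorem \cite{Ian13}: a hyperbolic group acting properly and cocompactly on a CAT(0) cube complex admits a finite-index subgroup $H$ such that the quotient $X/H$ is a special cube complex. Since $[G:H]<\infty$, the action $H \act X$ remains cocompact, so $X/H$ is compact; and since $X/H$ is a genuine special cube complex in the sense of Haglund--Wise (rather than a complex of groups), no cube has nontrivial $H$-stabilizer, i.e.\ $H$ acts freely on $X$. Hence the action $G \act X$ is virtually special, and Theorem \ref{thm:main} applies directly to yield that $E_G^{\partial_\R X}$ is hyperfinite.

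I do not expect any real obstacle here: the substance is entirely contained in Agol's theorem and in Theorem \ref{thm:main}. The only point deserving a word of care is that the finite-index subgroup produced by Agol acts \emph{freely} --- equivalently, is torsion-free --- which is precisely what is encoded in the assertion that $X/H$ is a (compact) special cube complex, and which, as in Agol's setup, may require first passing to a torsion-free finite-index subgroup of $G$; this step is already subsumed in the statement cited from \cite{Ian13}.
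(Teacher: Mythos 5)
Your proposal is correct and follows the paper's own proof exactly: the paper also deduces the corollary by combining Agol's theorem \cite[Theorem 1.1]{Ian13} (which supplies the finite-index subgroup acting freely with compact special quotient, i.e.\ virtual specialness in the sense of Definition \ref{def:virtually special group}) with Theorem \ref{thm:main}. Your additional remarks on freeness and compactness of the quotient are just the routine unpacking of that citation.
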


\begin{proof}
    This follows from Theorem \ref{thm:main} and \cite[Theorem 1.1]{Ian13}
\end{proof}

In Corollary \ref{cor:contact graph} below, note that the contact graph of every CAT(0) cube complex is a quasi-tree, hence hyperbolic by \cite{Hag14}.

\begin{cor}[Theorem {\ref{thm:intro contact graph}}]\label{cor:contact graph}
    Let $X$ be a CAT(0) cube complex and $G$ be a countable group acting virtually cocompact specially on $X$. Let $\mathscr{C} X$ be the contact graph of $X$. Then, the orbit equivalence relation $E_G^{\partial_\infty \mathscr{C} X}$ induced by the action of $G$ on the Gromov boundary $\partial_\infty \mathscr{C} X$ of $\mathscr{C} X$ is hyperfinite.
\end{cor}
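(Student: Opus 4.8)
The plan is to derive this from Theorem~\ref{thm:main}. Since $G$ acts virtually specially on $X$, the cube complex $X$ is finite dimensional with countably many hyperplanes, hence countable (as in the paragraph preceding Theorem~\ref{thm:hyperfinite of virt special groups}); so $\mathscr{C}X$ is a countable hyperbolic graph and $\partial_\infty\mathscr{C}X$ is a standard Borel space. I would then produce a $G$-equivariant, countable-to-one, Borel map $\iota\colon\partial_\infty\mathscr{C}X\to\partial_\R X$ and conclude via Lemma~\ref{lem:weak Borel homo}, applied with $E=E_G^{\partial_\infty\mathscr{C}X}$, $F=E_G^{\partial_\R X}$ and $f=\iota$: equivariance of $\iota$ gives $\zeta\,E\,\eta\Rightarrow\iota(\zeta)\,F\,\iota(\eta)$, and $F$ is hyperfinite by Theorem~\ref{thm:main}.

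To construct $\iota$ I would use Hagen's analysis of the contact graph (\cite{Hag14}): a point $\zeta\in\partial_\infty\mathscr{C}X$ is carried by a sequence of hyperplanes that, up to the equivalence defining $\partial_\infty\mathscr{C}X$, may be taken to be the ordered sequence of hyperplanes crossed by some combinatorial geodesic ray $\gamma$ in $X$; such a $\gamma$ determines a point $\xi_\gamma\in\partial_\R X$, namely the halfspace-ultrafilter sending each $h\in\H(X)$ to the side eventually containing $\gamma$ (well defined because a combinatorial geodesic crosses $h$ at most once; cf.\ Remark~\ref{rem:distance of convex sets}). Setting $\iota(\zeta)=\xi_\gamma$, one must handle the dependence on the representative $\gamma$: I would Borel-uniformize the set $R=\{(\zeta,\xi)\in\partial_\infty\mathscr{C}X\times\partial_\R X\mid \xi=\xi_\gamma\ \text{for some combinatorial geodesic ray}\ \gamma\ \text{representing}\ \zeta\}$, which is Borel, $G$-invariant, and (see below) has countable sections, so that Lusin--Novikov (Theorem~\ref{thm:Lusin-Novikov}) yields a Borel function; a little care then makes the selection $G$-equivariant, or one simply works with $R$ directly, since Lemma~\ref{lem:weak Borel homo} only needs $\zeta\,E\,\eta\Rightarrow\iota(\zeta)\,F\,\iota(\eta)$.

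The step I expect to be the crux is verifying that $\iota$ is countable-to-one --- equivalently, that a fixed $\xi\in\partial_\R X$ is $\xi_\gamma$ only for combinatorial geodesic rays $\gamma$ whose hyperplane sequences represent countably many (morally, at most one) points of $\partial_\infty\mathscr{C}X$. This is exactly where the quasi-tree structure of $\mathscr{C}X$ from \cite{Hag14} must enter: a ray that escapes in $\mathscr{C}X$ crosses infinitely many pairwise ``far'' hyperplanes, and the bottleneck geometry of a quasi-tree allows only countably many directions at infinity along which rays with a common Roller endpoint can escape. Pinning this down requires genuinely unwinding Hagen's correspondence between $\partial_\infty\mathscr{C}X$, his simplicial boundary, and sequences of hyperplanes, and checking measurability throughout; this is the main work.

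If this route proves awkward, two alternatives are worth trying. First, pass (as in Theorem~\ref{thm:hyperfinite of virt special groups}) to a finite-index $H\le G$ with $X/H$ compact special, so that $X$ embeds in $X(\Gamma)$ as a convex subcomplex; this induces an injective contact-graph map $\mathscr{C}X\to\mathscr{C}X(\Gamma)$, and one would try to show it induces a well-behaved (e.g.\ topological) embedding $\partial_\infty\mathscr{C}X\to\partial_\infty\mathscr{C}X(\Gamma)$ and then treat the right-angled Artin case directly, in the spirit of Section~\ref{sec:Hyperfiniteness in the case of right angled Artin groups}. Second, one could instead invoke, or reprove, hyperfiniteness of boundary orbit equivalence relations for group actions on quasi-trees, via a Borel choice of quasi-geodesic rays and a reduction to tail equivalence on the countable vertex set $\H(X)$, as in the hyperbolic-space literature.
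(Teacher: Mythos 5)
Your overall strategy (reduce $E_G^{\partial_\infty \mathscr{C}X}$ to $E_G^{\partial_\R X}$ and quote Theorem \ref{thm:main}) is the right one, but the reduction you sketch has a genuine gap at exactly the uniformization/equivariance step, and the step you flag as ``the main work'' is left undone. The set $R_\zeta$ of Roller points $\xi_\gamma$ arising from combinatorial geodesic rays $\gamma$ representing a fixed $\zeta\in\partial_\infty\mathscr{C}X$ is in general \emph{uncountable}, not countable: one can modify a ray by inserting, at infinitely many places, edges labelled by generators that commute with the neighbouring letters (e.g.\ in $A(\Gamma)$ for $\Gamma$ a path $a\,\text{--}\,b\,\text{--}\,c\,\text{--}\,d$, the rays reading $ab^{\epsilon_1}dab^{\epsilon_2}d\cdots$, $\epsilon_i\in\{\pm1\}$); the crossed-hyperplane paths stay at bounded Hausdorff distance in $\mathscr{C}X$, hence converge to the same $\zeta$, while the Roller limits are pairwise distinct. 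So Lusin--Novikov (Theorem \ref{thm:Lusin-Novikov}) does not apply to your relation $R$, and, worse, an arbitrary Borel selection from $R$ cannot satisfy the hypothesis of Lemma \ref{lem:weak Borel homo}: since $G$ is countable, the uncountably many points of $R_\zeta$ do not lie in a single $G$-orbit, so for $\eta=g\zeta$ there is no reason that the selected points $\iota(\zeta),\iota(\eta)$ are $E_G^{\partial_\R X}$-related. ``Working with $R$ directly'' does not repair this, because Lemma \ref{lem:weak Borel homo} needs an actual map that is a homomorphism of equivalence relations. What is needed is a \emph{canonical} ($\aut(X)$- or at least $G$-equivariant) choice of a Roller point over each $\zeta$, and producing that is essentially the content of the result the paper invokes. (By contrast, the point you worried about, countability of the fibres over a fixed $\xi\in\partial_\R X$, is comparatively harmless: two rays with the same Roller limit cross hyperplane sets with finite symmetric difference, so they determine at most one point of $\partial_\infty\mathscr{C}X$.)

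The paper's proof avoids all of this by quoting \cite[Theorem 1.2]{FLM24}: since the action is virtually special, $X$ is uniformly locally finite, and there is an $\aut(X)$-equivariant homeomorphism between the regular boundary $\partial_{reg}X\subset\partial_\R X$ and $\partial_\infty\mathscr{C}X$; as $\partial_{reg}X$ is a $G$-invariant Borel subset, hyperfiniteness of $E_G^{\partial_{reg}X}$ follows from Theorem \ref{thm:main} together with \cite[Proposition 5.2 (3)]{DJK94}, and transports through the homeomorphism. Your fallbacks are also not off-the-shelf: the convex embedding $X\hookrightarrow X(\Gamma)$ does not obviously induce a quasi-isometric embedding of contact graphs, and the action of $G$ on the quasi-tree $\mathscr{C}X$ is not proper (hyperplane stabilizers are typically infinite), so results such as \cite{KEOSS24}, which concern actions on trees, do not directly apply. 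I recommend replacing the hand-built correspondence by the citation of \cite{FLM24}, after noting uniform local finiteness of $X$.
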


\begin{proof}
    Note that $X$ is uniformly locally finite, hence finite dimensional. By \cite[Theorem 1.2]{FLM24}, there exists an $\aut(X)$-equivariant homeomorphism $f$ from the regular boundary $\partial_{reg}X$ of $X$ to $\partial_\infty \mathscr{C} X$. Since $\partial_{reg}X$ is an $\aut(X)$-invariant Borel subset of $\partial_\R X$, $E_G^{\partial_{reg} X}$ is hyperfinite by Theorem \ref{thm:main} and \cite[Proposition 5.2 (3)]{DJK94}. Thus, $E_G^{\partial_\infty \mathscr{C} X}$ is hyperfinite by $f$.
\end{proof}

\begin{cor}[Theorem {\ref{thm:intro RAAG}}]
    Let $\Gamma$ be a finite simple graph, $A(\Gamma)$ be the right angled Artin group of $\Gamma$, and $\Gamma^e$ be the extension graph of $\Gamma$. Then, the orbit equivalence relation $E_{A(\Gamma)}^{\partial_\infty\Gamma^e}$ induced by the action of $A(\Gamma)$ on the Gromov boundary $\partial_\infty \Gamma^e$ of $\Gamma^e$ is hyperfinite.
\end{cor}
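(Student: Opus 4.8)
The plan is to recognise this as essentially a special case of Corollary~\ref{cor:contact graph}, after replacing the extension graph $\Gamma^e$ by the contact graph of the universal cover of the Salvetti complex. First I would note that $A(\Gamma)$ acts freely and specially on $X(\Gamma)$, the universal cover of $R(\Gamma)$: the action $A(\Gamma)\act X(\Gamma)$ is free with quotient the Salvetti complex $R(\Gamma)$ by Remark~\ref{rem:RAAG} and Remark~\ref{rem:Deck transformation}, and $R(\Gamma)$ is a compact special cube complex (apply Theorem~\ref{thm:special iff raag} with the identity local isometry $R(\Gamma)\to R(\Gamma)$). Thus the action $A(\Gamma)\act X(\Gamma)$ is virtually special in the sense of Definition~\ref{def:virtually special group}, $X(\Gamma)$ is finite dimensional (its dimension equals the clique number of $\Gamma$), and Corollary~\ref{cor:contact graph} applies with $G=A(\Gamma)$ and $X=X(\Gamma)$: the orbit equivalence relation $E_{A(\Gamma)}^{\partial_\infty\mathscr{C} X(\Gamma)}$ is hyperfinite.

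It remains to transport this hyperfiniteness along an identification of $\partial_\infty\Gamma^e$ with $\partial_\infty\mathscr{C} X(\Gamma)$. Here I would invoke the theorem of Kim and Koberda that $\Gamma^e$ is $A(\Gamma)$-equivariantly quasi-isometric to the contact graph $\mathscr{C} X(\Gamma)$: the vertices of $\Gamma^e$ are the $A(\Gamma)$-conjugates of the standard generators, and the assignment sending a conjugate $gvg^{-1}$ to the hyperplane of $X(\Gamma)$ dual to the edge from $g$ to $gv$ is a well-defined $A(\Gamma)$-equivariant bijection onto $\H(X(\Gamma))$ under which commutation of conjugates corresponds to the crossing of hyperplanes, a relation that is quasi-dense in the contact relation. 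Since $\Gamma^e$ and $\mathscr{C} X(\Gamma)$ are connected graphs, hence geodesic metric spaces, and are hyperbolic (both are quasi-trees; see \cite{Hag14} for $\mathscr{C} X(\Gamma)$), this quasi-isometry induces an $A(\Gamma)$-equivariant homeomorphism $\phi\colon\partial_\infty\Gamma^e\to\partial_\infty\mathscr{C} X(\Gamma)$ of Gromov boundaries.

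Finally I would observe that $\phi$ is an $A(\Gamma)$-equivariant Borel isomorphism, so $(\phi\times\phi)\big(E_{A(\Gamma)}^{\partial_\infty\Gamma^e}\big)=E_{A(\Gamma)}^{\partial_\infty\mathscr{C} X(\Gamma)}$, and $E_{A(\Gamma)}^{\partial_\infty\Gamma^e}$ is Borel isomorphic to the hyperfinite CBER produced in the first step; hyperfiniteness then follows by \cite[Proposition 5.2 (1) and (3)]{DJK94}, exactly as in the last line of the proof of Corollary~\ref{cor:contact graph}.

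The step I expect to require the most care is the second one: making the $A(\Gamma)$-equivariant quasi-isometry $\Gamma^e\simeq\mathscr{C} X(\Gamma)$ precise — in particular verifying that the crossing graph of $X(\Gamma)$ is quasi-isometric to the full contact graph and that the vertex bijection intertwines the conjugation action on $\Gamma^e$ with the action on $\H(X(\Gamma))$ induced by left translation — together with the routine check that $\partial_\infty\Gamma^e$ carries a standard Borel structure for which $\phi$ and the induced orbit relation are Borel, which proceeds as for $\mathscr{C} X$ in the proof of Corollary~\ref{cor:contact graph}.
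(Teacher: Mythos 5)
Your proof is essentially the paper's: verify that $A(\Gamma)\act X(\Gamma)$ is a free action with compact special quotient $R(\Gamma)$, apply Corollary \ref{cor:contact graph}, and transfer hyperfiniteness along the $A(\Gamma)$-equivariant quasi-isometry $\Gamma^e\simeq \mathscr{C}X(\Gamma)$ of Kim--Koberda, which is exactly what the paper does (citing \cite[Section 7]{KK14}). One caveat on your sketch of that quasi-isometry: the vertex assignment $gvg^{-1}\mapsto$ (hyperplane dual to the edge from $g$ to $gv$) is neither well defined nor a bijection --- e.g. $v\cdot v\cdot v^{-1}=v$ while the edges $(1,v)$ and $(v,v^2)$ are dual to distinct hyperplanes --- the correct Kim--Koberda correspondence is the $A(\Gamma)$-equivariant surjection sending a hyperplane with a dual edge $(g,gv)$ to the conjugate $gvg^{-1}$ (well defined since the dual edges of that hyperplane are $(gw,gwv)$ with $w$ commuting with $v$), and this map is a quasi-isometry rather than a vertex bijection; with that correction the argument goes through as in the paper.
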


\begin{proof}
    This follows from Corollary \ref{cor:contact graph} and the fact that there exists a $A(\Gamma)$-equivariant quasi-isometry from $\Gamma^e$ to the contact graph $\mathscr{C} X(\Gamma)$ (see Definition \ref{def:RAAG} and \cite[Section 7]{KK14}).
\end{proof}

\begin{cor}[Theorem {\ref{thm:intro Coxeter groups}}]
    Let $G$ be a finitely generated Coxeter group and let $C$ be the Niblo-Reeves CAT(0) cube complex on which $G$ acts properly. Then, the orbit equivalence relation $E_G^{\partial_\R C}$ induced by the action of $G$ on the Roller boundary $\partial_\R C$ of $C$ is hyperfinite.
\end{cor}

\begin{proof}
    By \cite[Theorem 1.2]{HW10}, there exists a finite index torsion-free subgroup $H$ of $G$ such that the quotient $C/H$ is special. Since the action $G \act C$ has finitely many hyperplane orbits (see \cite[Remark 6.3]{HW10}) and $H$ is finite index in $G$, the action $H \act C$ also has finitely many hyperplane orbits. Thus, the statement follows by Theorem \ref{thm:hyperfinite of virt special groups}.
\end{proof}

\section{Cubulated hyperbolic groups}\label{sec:Cubulated hyperbolic group}

In this section, we verify that Theorem \ref{thm:main} can be considered as a generalization of \cite{HSS20}. The goal of this section is to prove Proposition \ref{prop:hyperfinite of Roller and Gromov}. We start by proving a descriptive set theoretic result that we use in the proof of Proposition \ref{prop:hyperfinite of Roller and Gromov}.

\begin{lem}\label{lem:hyperfininte when finite to 1}
    Let $X$ and $Y$ be standard Borel spaces and $G$ be a countable group acting on $X$ and $Y$ as Borel isomorphisms. Suppose that there exists a $G$-equivariant, finite-to-1, surjective Borel map $f \colon X \to Y$. Then, $E_G^X$ is hyperfinite if and only if $E_G^Y$ is hyperfinite.
\end{lem}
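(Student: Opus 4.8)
The plan is to prove both implications. For the forward direction, suppose $E_G^X$ is hyperfinite; I claim $E_G^Y$ is hyperfinite. The key observation is that the map $f$ descends to a map between orbit equivalence relations: since $f$ is $G$-equivariant, $x\,E_G^X\,x'$ implies $f(x)\,E_G^Y\,f(x')$, and since $f$ is surjective, every $E_G^Y$-class is the image of at least one $E_G^X$-class. Since $f$ is finite-to-1, each $E_G^Y$-class is the image of only finitely many $E_G^X$-classes (indeed, an orbit $Gy$ pulls back to $f^{-1}(Gy)$, which is a union of $G$-orbits, and the number of such orbits is bounded by $\max_{y'\in Gy}|f^{-1}(y')|$... actually one must be slightly careful here since a single orbit $Gy$ may be infinite; the right statement is that $f^{-1}(Gy)$ is a $G$-invariant set and each $G$-orbit inside it maps onto $Gy$, so picking one point $y_0 \in Gy$, the orbits in $f^{-1}(Gy)$ are in bijection with the $G_{y_0}$-orbits on $f^{-1}(y_0)$, a finite set). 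To make this rigorous and Borel, I would use Lusin-Novikov (Theorem \ref{thm:Lusin-Novikov}) to select a Borel section: there is a Borel set $Q \subset \{(y,x) \in Y\times X \mid f(x)=y\}$ with $|Q_y|=1$ for all $y$ (using surjectivity of $f$), giving a Borel map $\sigma \colon Y \to X$ with $f\circ\sigma = \mathrm{id}_Y$. Then $F := (\sigma\times\sigma)^{-1}(E_G^X)$ is a Borel equivalence relation on $Y$ contained in... hmm, not quite contained in $E_G^Y$ in an obvious way. Let me instead argue directly: $E_G^X$ restricted to $\sigma(Y)$ is hyperfinite (as a restriction of a hyperfinite CBER, by \cite[Proposition 5.2]{DJK94}), and $\sigma$ is a Borel isomorphism of $Y$ onto its image $\sigma(Y)$ carrying $F$ to $E_G^X|_{\sigma(Y)}$, so $F$ is hyperfinite. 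Moreover $F \subset E_G^Y$ because $f\circ\sigma=\mathrm{id}$ and $f$ is equivariant: if $\sigma(y)\,E_G^X\,\sigma(y')$ then $y = f(\sigma(y))\,E_G^Y\,f(\sigma(y'))=y'$. Finally every $E_G^Y$-class $Gy$ contains only finitely many $F$-classes: if $y' \in Gy$, then $\sigma(y')$ lies in $f^{-1}(Gy)$, and two such $y',y''$ are $F$-equivalent iff $\sigma(y')\,E_G^X\,\sigma(y'')$, i.e. iff $\sigma(y')$ and $\sigma(y'')$ lie in the same $G$-orbit; since $f^{-1}(Gy)$ contains boundedly many (at most $|f^{-1}(y)|$ per point, and... ) — more carefully, fix a point $x_0 = \sigma(y)$; every $\sigma(y')$ with $y'\in Gy$ satisfies $f(\sigma(y')) \in Gy = G f(x_0) = f(Gx_0)$, so there is $g$ with $f(\sigma(y')) = f(gx_0)$, hence $\sigma(y')$ and $gx_0$ lie in the same fiber of $f$, a finite set; the orbits meeting the finite union $\bigcup_{z \in f^{-1}(Gy \cap \text{range issues})}$ ... the cleanest formulation: the number of $G$-orbits in $f^{-1}(Gy)$ equals the number of $G_y$-orbits in $f^{-1}(y)$, which is finite since $f^{-1}(y)$ is finite. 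Therefore by Proposition \ref{prop:JKL}, $E_G^Y$ is hyperfinite.

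For the reverse direction, suppose $E_G^Y$ is hyperfinite. Take the Borel section $\sigma \colon Y \to X$ as above and also fix, using Lusin-Novikov applied to the (countable-sectioned) set $\{(x,x') : f(x)=f(x')\}$, a Borel partition of each fiber or at least a Borel way to enumerate fibers; more simply, since $f$ is finite-to-1 and Borel, by Lusin-Novikov the set $\{(x,x'):f(x)=f(x'),\ x'\in f^{-1}(f(x))\}$ has finite sections, so we can write $X = \bigsqcup_{n} A_n$ where $f|_{A_n}$ is injective (indeed finite-to-1 Borel maps admit such countable Borel decompositions). Define a Borel equivalence relation $E'$ on $X$ by $x\,E'\,x'$ iff $f(x)\,E_G^Y\,f(x')$; this is $(f\times f)^{-1}(E_G^Y)$, Borel, and contains $E_G^X$ by equivariance. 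Since $E_G^Y$ is hyperfinite and $f$ is countable-to-1 (being finite-to-1), $E'$ is a hyperfinite CBER: it is Borel-reducible to $E_G^Y$ via $f$ and is countable, so hyperfinite by \cite[Proposition 5.2 (1) and (2)]{DJK94}. Now $E_G^X \subset E'$, and I claim each $E'$-class contains only finitely many $E_G^X$-classes: an $E'$-class is $f^{-1}(Gy)$ for some orbit $Gy$, which as noted decomposes into finitely many $G$-orbits (= $E_G^X$-classes) because $f$ is finite-to-1. Hence by Proposition \ref{prop:JKL}, $E_G^X$ is hyperfinite.

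The main obstacle I anticipate is the bookkeeping showing that $f^{-1}(Gy)$ splits into only finitely many $G$-orbits when the orbit $Gy$ may be infinite — this is not literally ``$|f^{-1}(y)|$ orbits'' and requires the observation that the $G$-orbits in $f^{-1}(Gy)$ correspond to the $G_y$-orbits (equivalently, $\mathrm{Stab}_G(y)$-orbits) on the single finite fiber $f^{-1}(y)$, which is the source of the finiteness. The rest is routine descriptive-set-theoretic plumbing: producing the Borel section via Lusin-Novikov (Theorem \ref{thm:Lusin-Novikov}), checking that $(f\times f)^{-1}(E_G^Y)$ is a countable Borel equivalence relation containing $E_G^X$, and invoking the closure properties of hyperfinite CBERs from \cite[Proposition 5.2]{DJK94} together with Proposition \ref{prop:JKL} in each direction.
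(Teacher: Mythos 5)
Your argument is correct and takes essentially the same route as the paper: in the hard direction you extract a Borel section of $f$ via Lusin--Novikov, transfer the restriction of $E_G^X$ to its image to a hyperfinite subrelation $F\subset E_G^Y$, show each $E_G^Y$-class contains only finitely many $F$-classes (your stabilizer-orbit count of the $G$-orbits in $f^{-1}(Gy)$ is the same finiteness the paper obtains by pigeonhole on a single fiber), and conclude with Proposition \ref{prop:JKL}; the easy direction is the paper's Lemma \ref{lem:weak Borel homo} re-proved inline via $E'=(f\times f)^{-1}(E_G^Y)$. The only correction needed is the final citation in that easy direction: from $E_G^X\subset E'$ with $E'$ hyperfinite you should invoke closure of hyperfiniteness under Borel subequivalence relations (\cite[Proposition 5.2 (1)]{DJK94}, i.e.\ Lemma \ref{lem:weak Borel homo}), not Proposition \ref{prop:JKL}, which transfers hyperfiniteness from the smaller relation to the larger one and so goes in the wrong direction here (your finitely-many-classes claim there is true but unnecessary).
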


\begin{proof}
    First, suppose that $E_G^Y$ is hyperfinite, then $E_G^X$ is hyperfinite by Lemma \ref{lem:weak Borel homo}.
    
    Next, suppose that $E_G^X$ is hyperfinite. Since $f$ is finite-to-1 and surjective, by applying Theorem \ref{thm:Lusin-Novikov} and Theorem \ref{thm:Arsenin-Kunugui} to the set $\{(x,f(x)) \in X\times Y \mid x \in X\}$, there exists a Borel subset $A \subset X$ such that $f|_A \colon A\to Y$ is Borel isomorphic. Since $E_G^X$ is hyperfinite, $E_G^X|_A$ is hyperfinite. Define $F \subset Y^2$ by $F = (f \times f)(E_G^X|_A)$, then $F$ is a hyperfinite Borel equivalence relation on $Y$. By $G$-equivariance of $f$, we have $F \subset E_G^Y$.

    We claim that every $E_G^Y$-class contains only finitely many $F$-classes. Let $y_0 \in Y$ and set $n=|f^{-1}(y_0)|$. Since $f$ is surjective and finite-to-1, we have $0 < n <\infty$. Suppose for contradiction that there exist $y_0,\ldots,y_n \in Y$ such that for any distinct $i,j \in \{0,\ldots,n\}$, we have $y_i\, E_G^Y \, y_j$ and $\neg\, y_i\, F \, y_j$. For each $i \in \{0,\ldots,n\}$, take a unique $x_i \in A$ such that $f(x_i)=y_i$. Also, by $y_i\, E_G^Y \, y_0$, there exists $g_i \in G$ such that $g_iy_i=y_0$. Note $f(g_ix_i)= g_if(x_i)=y_0$ i.e. $g_ix_i\in f^{-1}(y_0)$. Hence, by $n=|f^{-1}(y_0)|$, there exist distinct $i,j \in \{0,\ldots,n\}$ such that $g_ix_i=g_jx_j$, hence $x_i \,E_G^X\, x_j$. This implies $y_i \,F\, y_j$ by $x_i,x_j \in A$, which contradicts the way we took $y_0,\ldots,y_n$.

    Thus, by the above claim and Proposition \ref{prop:JKL}, $E_G^Y$ is hyperfinite.
\end{proof}

Next, we record results on hyperbolic CAT(0) cube complexes to relate the Gromov boundary and the Roller boundary from Lemma \ref{lem:geod rays of same Roller limit} up to Lemma \ref{lem:unif. loc .fin. hyp.}. These results are easy to show and should be well known to experts, but we write down the proofs for completeness.

We introduce some notation for Lemma \ref{lem:geod rays of same Roller limit} and Lemma \ref{lem:from Roller bdry to Gromov bdry} below. Let $X$ be a CAT(0) cube complex. A path $p=(p_0,p_1,\ldots)$ in $X$ is a combinatorial path, where $p_{i-1}$ and $p_i$ are adjacent $0$-cubes for every $i\in\NN$. We denote by $d_\G$ the graph metric on $\R X$ defined by $\G$ (see \eqref{eq:Borel graph on RX}). For $x,y \in \R X$, we denote by $\H(x,y)$ the set of all hyperplanes in $X$ separating $x$ and $y$. For $x \in \R X$, we define $\lk_\G(x)=\{y \in \R X \mid (x,y)\in \G\}$ (note Remark \ref{rem:restriction of Borel graph}).

\begin{figure}[htbp]
  \begin{center}
 \hspace{0mm} 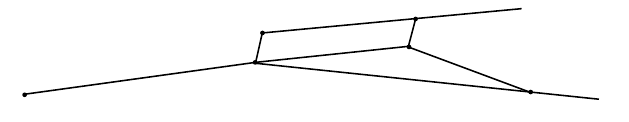
  \end{center}
   \vspace{-3mm}
  \caption{The proof of Lemma \ref{lem:geod rays of same Roller limit} (3)}
  \label{geodray}
\end{figure}

\begin{lem}\label{lem:geod rays of same Roller limit}
    Let $X$ be a CAT(0) cube complex. The following hold.
    \begin{itemize}
        \item[(1)]
        Let $p=(p_0,p_1,\ldots)$ and $q=(q_0,q_1,\ldots)$ be geodesic rays in $X$ with $o=p_0=q_0 \in X^{(0)}$. If $p$ and $q$ converge to the same point in $\partial_\R X$, then for any $n \in \NN$, there exists $m \in \NN$ such that $d_X(o,p_n) + d_X(p_n,q_m) = d_X(o,q_m)$.
        \item[(2)] 
        For any $x,y \in \partial_\R X$, we have $d_\G(x,y)=|\H(x,y)|$.
        \item[(3)]
        For any $x,y \in \partial_\R X$ with $\H(x,y)=\{h\}$ and any geodesic ray $p=(p_0,p_1,\ldots)$ in $X$ converging to $x$, there exist $n,m \in \NN$ and a geodesic ray $q=(q_0,q_1,\ldots)$ in $X$ converging to $y$ such that for any $i \in\NN$, we have $d_X(p_{n+i},q_{m+i})=1$ and the edge $(p_{n+i},q_{m+i})$ is dual to $h$.
        \item[(4)]
        If $D=\sup_{x \in X^{(0)}}|\lk_\G(x)| < \infty$, then $\sup_{x \in \partial_\R X}|\lk_\G(x)|\le D-2$.
    \end{itemize}
\end{lem}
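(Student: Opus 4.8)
The four items are essentially independent; (1), (2), (4) are short, while (3) — producing a geodesic ray to $y$ by ``reflecting'' a tail of $p$ across $h$ — is the technical heart. \textbf{For (1):} the first $n$ edges of $p$ are dual to distinct hyperplanes $h_1,\dots,h_n$ with $\H(o,p_n)=\{h_1,\dots,h_n\}$, and each $h_i$ separates $o$ from $x$ (a geodesic ray never crosses a hyperplane that does not separate it from its limit, for it would have to cross back). Since $q\to x$, for $m$ large every $h_i$ separates $o$ from $q_m$, so $\{h_1,\dots,h_n\}\subseteq\H(o,q_m)$; then the identity $\H(a,b)=\H(a,c)\mathbin{\triangle}\H(c,b)$ (a hyperplane separates $a,b$ iff it separates exactly one of $(a,c),(c,b)$), applied with $c=p_n$, gives $\H(o,p_n)\cap\H(p_n,q_m)=\emptyset$, hence $d_X(o,p_n)+d_X(p_n,q_m)=|\H(o,p_n)|+|\H(p_n,q_m)|=|\H(o,q_m)|=d_X(o,q_m)$.

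\textbf{For (2):} along any $\G$-path of length $\ell$ from $x$ to $y$, $\H(x,y)$ is a symmetric difference of $\ell$ singletons, so $d_\G(x,y)\ge|\H(x,y)|$. For the reverse (trivial if $\H(x,y)$ is infinite) I would induct on $|\H(x,y)|$, using the claim: if $\H(x,y)\ne\emptyset$ there is $h\in\H(x,y)$ such that the point $z$ obtained from $x$ by switching only its $h$-coordinate lies in $\R X$; then $\H(x,z)=\{h\}$, $\H(z,y)=\H(x,y)\setminus\{h\}$, and induction applies. To prove the claim, take $h\in\H(x,y)$ minimal for $h\prec h'\iff x(h)\subsetneq x(h')$; if switching failed, some $k\ne h$ would have $x(k)\subseteq x(h)$, and consistency of $y$ then forces $x(k)\ne y(k)$, i.e.\ $k\in\H(x,y)$, so $x(k)\subsetneq x(h)$ contradicts minimality.

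\textbf{For (3) (the hard part):} write $h^{+}:=x(h)$; the hypothesis $\H(x,y)=\{h\}$ with $y\in\R X$ means precisely that $h^{+}$ is $\subseteq$-minimal among the halfspaces $\{x(k):k\in\H(X)\}$. The main work is to show that a tail of $p$ lies in $N(h)\cap h^{+}$, where $N(h)$ is the carrier of $h$. With $S_j:=\{g\in\H(X):g\text{ separates }p_j\text{ from }N(h)\}$ one has $|S_j|=d_X(p_j,N(h))<\infty$; for $j$ large enough that $p_j\in h^{+}$, any $g\in S_j$ is $\ne h$, does not cross $h$ (a hyperplane crossing $h$ meets $N(h)$), and — since $p_j\in h^{+}$ — lies in $h^{+}$; writing $g^{+}$ for the side of $g$ away from $N(h)$ we get $g^{+}\subsetneq h^{+}$ and $p_j\in g^{+}$. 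A hyperplane in $\bigcap_jS_j$ would give $x(g)=g^{+}\subsetneq h^{+}=x(h)$, contradicting minimality; and $(S_j)_j$ is decreasing (if $p$ crossed some $g\in S_{j+1}$ at step $j+1$ it would stay in $g^{+}$ afterwards, forcing again $x(g)\subsetneq h^{+}$). Hence $S_n=\emptyset$, i.e.\ $p_n\in N(h)\cap h^{+}$, for $n$ large. For such $n$ the subray $(p_n,p_{n+1},\dots)$ lies in the convex subcomplex $N(h)\cap h^{+}$, which the product structure $N(h)\cong h\times\{0,1\}$ identifies, via the side-swapping involution $\sigma$ of $N(h)$, with the convex subcomplex $N(h)\cap h^{-}$. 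Set $q:=(\sigma(p_n),\sigma(p_{n+1}),\dots)$: by convexity it is a geodesic ray, $d_X(p_{n+i},q_i)=1$, and $(p_{n+i},q_i)$ is dual to $h$. Finally $\H(\sigma(p_j),y)=\{h\}\mathbin{\triangle}\H(p_j,y)=\H(p_j,x)$ (as $h\notin\H(p_j,x)$), which decreases to $\emptyset$ as $j\to\infty$ since $p\to x$; hence $q\to y$.

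\textbf{For (4):} since $D<\infty$, $X$ is locally finite, so every $\xi\in\partial_\R X$ is a limit of a combinatorial geodesic ray $p$ (take vertices $v_n\to\xi$, geodesics from a fixed basepoint to $v_n$, and a subsequential limit of those geodesics); also, by Remark~\ref{rem:restriction of Borel graph} the $1$-skeleton neighbours of a vertex $v$ are among its $\G$-neighbours, so $\deg_X(v)\le|\lk_\G(v)|\le D$. Fix $\xi$ and let $h_1,\dots,h_k$ be the hyperplanes on which $\xi$ can be switched while staying in $\R X$, so $|\lk_\G(\xi)|=k$ and $\H(\xi,\eta_i)=\{h_i\}$ for the switched points $\eta_i\in\partial_\R X$. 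By part~(3) applied to each $(\xi,\eta_i)$, a tail of $p$ lies in $N(h_i)$; choose $n\ge1$ with $p_{n-1},p_n,p_{n+1}\in N(h_i)\cap\xi(h_i)$ for all $i$. Then at $p_n$ there are $k$ distinct edges dual to $h_1,\dots,h_k$, together with $(p_{n-1},p_n)$ and $(p_n,p_{n+1})$, which are dual to the hyperplanes $p$ crosses at steps $n$ and $n+1$ and hence distinct from each other and from every $h_i$ (which $p$ does not cross at a step $\ge n$, since $p_{n-1},p_n,p_{n+1}$ are on one side of $h_i$). So $k+2\le\deg_X(p_n)\le D$, i.e.\ $|\lk_\G(\xi)|\le D-2$, and taking the supremum over $\xi$ finishes. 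The step I expect to be delicate is the carrier claim in (3) (and the standard fact, used in (4), that local finiteness forces Roller boundary points to be limits of combinatorial geodesic rays).
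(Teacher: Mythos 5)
Your items (1), (2) and (4) follow the paper's own route: (1) is the same observation that all hyperplanes crossed by $p$ up to $p_n$ are eventually crossed by $q$ (you phrase the conclusion via disjointness of $\H(o,p_n)$ and $\H(p_n,q_m)$, the paper via the median of $o,p_n,q_m$ — equivalent); (2) is exactly the paper's argument of flipping $x$ at a minimal element of $\H(x,y)$; (4) is the same two-extra-edges count at a far-out vertex of $p$, done by you in the $1$-skeleton rather than in $\G$, and in fact you are slightly more careful than the paper about why the $k$ edges dual to the $h_i$ are distinct from the two edges of $p$. Item (3) is where you genuinely diverge: the paper takes a geodesic ray to $y$ from a point of $p$ (citing Genevois, Prop.\ A.2) and then straightens it against $p$ by disk diagrams, medians and corner moves, whereas you first show, using the minimality of $x(h)$ forced by $\H(x,y)=\{h\}$, that the separating-hyperplane sets $S_j$ between $p_j$ and the carrier $N(h)$ are eventually decreasing with empty intersection, so a tail of $p$ lies in $N(h)\cap x(h)$, and then you reflect that tail across the carrier's product structure to manufacture $q$ directly. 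Your argument is self-contained (no appeal to the existence of a ray to $y$, no diagram calculus) at the cost of invoking the standard facts that carriers are convex with $N(h)\cong h\times[0,1]$; I checked the key steps ($g\in S_j$ implies $g^+\subsetneq x(h)$, the monotonicity of $(S_j)$, and $\H(\sigma(p_j),y)=\H(p_j,x)$) and they are correct.

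Two small repairs are needed in (4). First, your parenthetical justification that every $\xi\in\partial_\R X$ is the limit of a combinatorial geodesic ray — ``take geodesics from a basepoint to $v_n\to\xi$ and a subsequential limit'' — is not right as stated: in $\ZZ^2$ with $v_n=(n,1)$ and the geodesics running along the $x$-axis and going up only at the last step, the limit ray is the $x$-axis, whose Roller limit differs from $\xi$. The fact itself is true (and is exactly what the paper cites, \cite[Proposition A.2]{Gen20}, also used in its proof of (3)); either cite it or build the ray greedily, always crossing a hyperplane of $\H(p_j,\xi)$ adjacent to the current vertex. Second, you enumerate $\lk_\G(\xi)$ as a finite list $h_1,\dots,h_k$ before knowing it is finite; run the count over an arbitrary finite subset of $\lk_\G(\xi)$, as the paper does, and the same bound $|F|+2\le D$ gives the conclusion.
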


\begin{proof}
    (1)
    For each $i \in \NN$, let $h_i$ (resp. $k_i$) be the hyperplane dual to the edge $(p_{i-1},p_i)$ (resp. $(q_{i-1},q_i)$). We have $\{h_i \mid i \in\NN\} = \{k_i \mid i \in\NN\}$, since $p$ and $q$ converge to the same point in $\partial_\R X$. For any $n \in \NN$, there exists $m \in \NN$ such that $\{h_i \mid i \le n\} \subset \{k_i \mid i \le m\}$. By considering the median of the three vertices $o$, $p_n$, and $q_m$, we can see $d_X(o,q_m) = d_X(o,p_n) + d_X(p_n,q_m)$.

    (2)
    $|\H(x,y)| \le d_\G(x,y)$ follows by $|\H(x,y)|=1 \,\Leftrightarrow\, d_\G(x,y)=1$. We will show $d_\G(x,y)\le|\H(x,y)|$. Since the case $|\H(x,y)|=\infty$ is trivial, we assume $|\H(x,y)|<\infty$. Define an order $\le$ on $\H(x,y)$ as follows; for $h,k\in \H(x,y)$, $h\le k \,\Leftrightarrow\, x(h) \subset x(k)$. Since $\H(x,y)$ is finite, there exists a minimal element $h_1 \in \H(x,y)$. Define $z\in \prod_{h \in \H}\{h^-,h^+\}$ by $z(h_1) = y(h_1)$ and $z(h)=x(h)$ if $h\in \H\setminus\{ h_1\}$.

    We claim $z \in \partial_\R X$ i.e. $\forall\,h,k \in \H,\, z(h) \cap z(k) \neq \emptyset$. By $x \in \partial_\R X$, this is trivial if either $h,k \in \H\setminus\{h_1\}$ or $h$ and $k$ cross. If $k \in \H\setminus\H(x,y)$, then by $z(k) =x(k) =y(k)$ and $y \in \partial_\R X$, we have $z(h_1)\cap z(k) = y(h_1)\cap y(k) \neq \emptyset$. If $k \in \H(x,y)\setminus\{h_1\}$ and $k$ and $h_1$ don't cross (the case $k$ and $h_1$ cross is trivial as mentioned above), then by minimality of $h_1$, we have $x(h_1) \subsetneq x(k)$, hence $z(h_1) \cap z(k) = y(h_1) \cap x(k) \neq \emptyset$.

    Thus, $z \in\partial_\R X$ is adjacent with $x$ in $\G$ and we have $\H(z,y) = \H(x,y) \setminus\{h_1\}$. By repeating this, we get a path of length $|\H(x,y)|$ in $\G$ from $x$ to $y$. Hence, $d_\G(x,y)\le|\H(x,y)|$.

    (3) Fix $o \in X^{(0)}$. By $\H(x,y)=\{h\}$, exactly one of $\H(o,x)$ or $\H(o,y)$ contains $h$. We may assume $h \in \H(o,x)$. Indeed, if $h \in \H(o,y)$, then we can take $o' \in X^{(0)}$ with $h\in \H(o,o')$. Note $h \in \H(o',x)$. There exists $N_0\in \NN$ such that for any geodesic $\gamma$ in $X$ from $o'$ to $p_{N_0}$, the path $\gamma\cdot p_{[p_{N_0},\infty)}$ is a geodesic ray. It's enough to show the statement for $\gamma\cdot p_{[p_{N_0},\infty)}$ since the path merges $p$.
    
    Hence, we assume $h \in \H(o,x)$. There exists $n_0 \in \NN$ such that the edge $(p_{n_0-1},p_{n_0})$ is dual to $h$. By \cite[Proposition A.2]{Gen20}, we can take a geodesic ray $q=(q_0,q_1,\ldots)$ from $p_{n_0-1}$ in $X$ converging to $y$ (note $q_0=p_{n_0-1}$). For each $i \in \NN$, let $h_i$ (resp. $k_i$) be the hyperplane dual to the edge $(p_{i-1}, p_i)$ (resp. $(q_{i-1}, q_i)$). Note $h_{n_0}=h$ and $\{h_i\}_{i\ge n_0+1} = \{k_i\}_{i\ge 1}$. For any $n > n_0$, their exists $m \in \NN$ such that $\{h_i\}_{i = n_0+1}^n \subset \{k_i\}_{i = 1}^m$. Let $v \in X^{(0)}$ be the median of $p_{n_0-1}$, $p_n$, and $q_m$. Take geodesics $\alpha$, $\beta$, and $\gamma$ in $X$, respectively, from $p_{n_0-1}$ to $v$, from $v$ to $p_n$, and from $v$ to $q_m$. By $\{h_i\}_{i = n_0+1}^n \subset \{k_i\}_{i = 1}^m$ and $h \notin \{k_i\}_{i = 1}^m$, the set of all hyperplanes crossed by $\alpha$ is exactly $\{h_i\}_{i = n_0+1}^n$ and the set of all hyperplanes crossed by $\beta$ is exactly $\{h\}$. Hence, $\beta$ is an edge dual to $h$, and we may assume that the geodesics $p_{[p_{n_0},p_n]}$ and $\alpha$ are the opposite sides of a strip of $2$-cubes from the edge $p_{[p_{n_0-1},p_{n_0}]}$ to $\beta^{-1}$ by deforming $\alpha$ by corner moves (in the same way as the proof of Lemma \ref{lem:grid between convex sets}). Also, the paths $\beta \cdot p_{[p_n,\infty)}$ and $\alpha\cdot\gamma\cdot q_{[q_m,\infty)}$ are geodesic rays. Note that $\alpha\cdot\gamma\cdot q_{[q_m,\infty)}$ is obtained from $q$ by corner moves. 
    
    By taking larger and larger $n$ and repeating this argument, we get a geodesic ray $q'$ from $p_{n_0-1}$ converging to $y$ that is the limit of sequences of paths obtained from $q$ by corner moves and satisfies the statement.
    
    (4) Let $x \in \partial_\R X$. By \cite[Proposition A.2]{Gen20}, take a geodesic ray $p$ in $X$ converging to $x$. Let $F \subset \lk_\G(x)$ be finite. By Lemma \ref{lem:geod rays of same Roller limit} (3), for any $y \in F$, there exist $n_y,m_y \in \NN$ and a geodesic ray $q^y=(q_0^y,q_1^y,\ldots)$ in $X$ converging to $y$ such that for any $i \in \NN$, we have $d_X(p_{n_y+i},q_{m_y+i}^y)=1$. Define $N \in \NN$ by $N = 1+\max_{y \in F} n_y$. Note $\{p_{N-1},p_{N+1}\} \subset \lk_\G(p_N)$. Hence, we have $|F|+2 \le |\lk_\G(p_N)| \le D$. Since $F$ is arbitrary, this implies $|\lk_\G(x)| \le D-2$ for any $x \in \partial_\R X$.
\end{proof}

In Lemma \ref{lem:from Roller bdry to Gromov bdry} below, $X$ is hyperbolic in $\ell^1$-metric and $\partial_\infty X$ is the Gromov boundary of $X$. 

\begin{lem}\label{lem:from Roller bdry to Gromov bdry}
    Let $X$ be a $\delta$-hyperbolic CAT(0) cube complex with $\delta\ge0$.
    \begin{itemize}
        \item[(1)]
        If $p$ and $q$ are geodesic rays in $X$ converging to the same point in $\partial_\R X$, then $p$ and $q$ converge to the same point in $\partial_\infty X$.
        \item[(2)]
        Define a map $f \colon \partial_\R X \to \partial_\infty X$ as follows; for $\xi \in \partial_\R X$, take a geodesic ray $p$ in $X$ converging to $\xi$ and define $f(\xi)$ to be the limit of $p$ in $\partial_\infty X$. Then, for any $x,y \in \partial_\R X$, (i)-(iii) are all equivalent; (i) $d_\G(x,y) \le \delta$, (ii) $d_\G(x,y) < \infty$, (iii) $f(x)=f(y)$.
    \end{itemize}
\end{lem}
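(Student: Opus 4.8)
The plan is to establish (1) first and then derive (2) from it, since $f$ is only well defined once (1) is known. Throughout I will use that a geodesic ray in a hyperbolic space converges to infinity and hence determines a point of $\partial_\infty X$, that $d_\G(x,y)=|\H(x,y)|$ for $x,y\in\partial_\R X$ by Lemma \ref{lem:geod rays of same Roller limit}(2), and that every point of $\partial_\R X$ is the Roller limit of a geodesic ray issuing from a prescribed vertex by \cite[Proposition A.2]{Gen20}.

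For (1), given geodesic rays $p,q$ converging to $\xi\in\partial_\R X$, I would first reduce to the case $p_0=q_0$. Let $v\in\R X$ be the median of $p_0,q_0,\xi$, so that $v(h)$ is the majority value of $\{p_0(h),q_0(h),\xi(h)\}$; then $v$ is a vertex, and one checks directly that $\H(p_0,v)\cap\H(v,\xi)=\emptyset$ while $\H(p_0,v)\cup\H(v,\xi)=\H(p_0,\xi)$, and similarly with $q_0$ in place of $p_0$. Fixing a geodesic ray $\rho$ from $v$ to $\xi$ and geodesics $\sigma_p$ from $p_0$ to $v$ and $\sigma_q$ from $q_0$ to $v$, it follows that $\sigma_p\cdot\rho$ and $\sigma_q\cdot\rho$ are geodesic rays converging to $\xi$ (no hyperplane is crossed twice, and the crossed hyperplanes are exactly $\H(p_0,\xi)$, resp.\ $\H(q_0,\xi)$); as they share the subray $\rho$ they converge to the same point of $\partial_\infty X$. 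So it suffices to treat the pairs $\{p,\sigma_p\cdot\rho\}$ and $\{q,\sigma_q\cdot\rho\}$, each with a common basepoint. In that situation write $p_0=q_0=:o$. By Lemma \ref{lem:geod rays of same Roller limit}(1), for each $n$ there is $m$ with $d_X(o,p_n)+d_X(p_n,q_m)=d_X(o,q_m)$; since $d_X(o,p_n)=n$ and $d_X(o,q_{m'})=d_X(o,q_m)+(m'-m)$ for $m'\ge m$, one gets $d_X(p_n,q_{m'})=d_X(o,q_{m'})-n$, hence $(p_n,q_{m'})_o^X=n$ for all large $m'$. Feeding this into the hyperbolic inequality Proposition \ref{prop:hyp sp}(2) with an intermediate point $q_{m'}$ (for $m'$ large relative to $i,j$) yields $(p_i,q_j)_o^X\ge\min(i,j)-\delta$ for all $i,j$, so $(p_i,q_j)_o^X\to\infty$ as $i,j\to\infty$ and $p,q$ converge to the same point of $\partial_\infty X$. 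In particular $f$ is well defined.

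For (2): (i)$\Rightarrow$(ii) is immediate. For (ii)$\Rightarrow$(iii), take a $\G$-geodesic $x=z_0,z_1,\dots,z_k=y$ with $k=d_\G(x,y)=|\H(x,y)|<\infty$, so each $\H(z_{l-1},z_l)$ is a singleton; starting from any geodesic ray $p^{(0)}$ converging to $x$ and applying Lemma \ref{lem:geod rays of same Roller limit}(3) repeatedly, I obtain geodesic rays $p^{(l)}$ converging to $z_l$ with $p^{(l-1)}$ and $p^{(l)}$ eventually at distance exactly $1$; then $p^{(0)}$ and $p^{(k)}$ lie at finite Hausdorff distance, hence converge to the same point of $\partial_\infty X$, i.e.\ $f(x)=f(y)$. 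For (iii)$\Rightarrow$(i), fix a vertex $o$ and geodesic rays $p,q$ from $o$ converging in $\partial_\R X$ to $x,y$ respectively; since $f(x)=f(y)$ they converge to the same point of $\partial_\infty X$, so $(p_n,q_m)_o^X\to\infty$, and Proposition \ref{prop:hyp sp}(1) applied to $o$ together with far-out points of $p$ and $q$ forces $d_X(p_i,q_i)\le\delta$ for every $i$. Since $p$ (resp.\ $q$) crosses each hyperplane at most once and converges to $x$ (resp.\ $y$), every $h\in\H(x,y)$ satisfies $p_i(h)=x(h)\ne y(h)=q_i(h)$ for all large $i$, so $\H(x,y)$ is eventually contained in $\H(p_i,q_i)$; if $|\H(x,y)|\ge\delta+1$ this would give $d_X(p_i,q_i)=|\H(p_i,q_i)|\ge\delta+1$ for large $i$, a contradiction. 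Hence $d_\G(x,y)=|\H(x,y)|\le\delta$.

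The main obstacle is the basepoint reduction in (1): without it one is left comparing the orders in which $p$ and $q$ cross their nearly identical sets of dual hyperplanes, which I do not see how to control without re-invoking hyperbolicity directly; routing through the median $v$ and the shared ray $\rho$ circumvents this and lets the already proven Lemma \ref{lem:geod rays of same Roller limit}(1) do the work. A secondary subtlety is obtaining the sharp constant $\delta$ (rather than some larger function of $\delta$) in (iii)$\Rightarrow$(i), which is exactly what the thin-triangle form of hyperbolicity in Proposition \ref{prop:hyp sp}(1) supplies.
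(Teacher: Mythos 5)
Your proof is correct and follows essentially the same route as the paper: both parts hinge on Lemma \ref{lem:geod rays of same Roller limit} (parts (1), (2), (3)) together with hyperbolicity, and your (iii)$\Rightarrow$(i) is argued exactly as in the paper, via finite subsets of $\H(x,y)$ being eventually separated by $p_n$ and $q_n$ with $d_X(p_n,q_n)\le\delta$. The only minor deviations are that in (1) you reduce to a common basepoint by a self-contained median/concatenation argument where the paper cites \cite[Lemma 3.27]{Oya24}, you invoke the four-point condition of Proposition \ref{prop:hyp sp} rather than thin triangles, and in (iii)$\Rightarrow$(i) the contradiction should be run with a finite $F\subset\H(x,y)$ of size $>\delta$ (rather than assuming $|\H(x,y)|\ge\delta+1$) so that the sharp bound $d_\G(x,y)\le\delta$ also follows when $\delta\notin\ZZ$.
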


\begin{proof}
    (1) Fix $o \in X^{(0)}$. Since there exists a geodesic ray $p'$ (resp. $q'$) from $o$ that merges $p$ (resp. $q$) (e.g. \cite[Lemma 3.27]{Oya24}), we may assume that both $p$ and $q$ are from $o$. Let $p=(p_1,p_2,\ldots)$ and $q=(q_1,q_2,\ldots)$ be the sequences of $0$-cubes composing $p$ and $q$. By Lemma \ref{lem:geod rays of same Roller limit} (1), for any $n \in \NN$, there exists $m \in \NN$ such that $d_X(o,p_n) + d_X(p_n,q_m) = d_X(o,q_m)$. Hence, $d_X(p_n,q_n) \le  \delta$ since $X$ is $\delta$-hyperbolic. Since $n$ is arbitrary, $p$ and $q$ converge to the same point in $\partial_\infty X$.

    (2)
    Note that the map $f \colon \partial_\R X \to \partial_\infty X$ is well-defined by \cite[Proposition A.2]{Gen20} (i.e. a geodesic ray $p$ converging to $\xi$ exists) and Lemma \ref{lem:from Roller bdry to Gromov bdry} (1). 
    
    $(i)\Rightarrow(ii)$ is trivial and $(ii)\Rightarrow(iii)$ follows from Lemma \ref{lem:geod rays of same Roller limit} (3). We'll show $(iii)\Rightarrow(i)$. Fix $o\in X^{(0)}$ and take geodesic rays $p=(p_0,p_1,\ldots)$ and $q=(q_0,q_1,\ldots)$ in $X$ from $o$ that converge to $x$ and $y$ respectively. Given a finite set $F \subset \H(x,y)$, there exists $n \in \NN$ such that $F \subset \H(p_n,q_n)$. By $f(x)=f(y)$, we have $d_X(p_n,q_n) \le \delta$. Hence, $|F| \le |\H(p_n,q_n)| = d_X(p_n,q_n) \le \delta$. Since $F$ is arbitrary, this implies $d_\G(x,y)=|\H(x,y)| \le \delta$ by Lemma \ref{lem:geod rays of same Roller limit} (2).
\end{proof}

In Lemma \ref{lem:unif. loc .fin. hyp.} below, we cannot weaken uniform local finiteness to local finiteness. Indeed, in the example of \cite[Remark 2.9]{Oya25}, the Gromov boundary is a singleton, but the Roller boundary contains infinitely many points. Note that the map $f$ being $R$-to-1 below means that for any $x \in \partial_\infty X$, $|f^{-1}(x)|\le R$.

\begin{lem}\label{lem:unif. loc .fin. hyp.}
    Let $X$ be a uniformly locally finite hyperbolic CAT(0) cube complex and let $f \colon \partial_\R X \to \partial_\infty X$ be the map as in Lemma \ref{lem:from Roller bdry to Gromov bdry} (2). Then, $f$ is $R$-to-1 with some $R \in \NN$, surjective, continuous, and $\aut(X)$-equivariant, where $\aut(X)$ is the automorphism group of $X$.
\end{lem}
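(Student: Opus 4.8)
Fix a basepoint $o \in X^{(0)}$ and $\delta \ge 0$ with $X$ being $\delta$-hyperbolic. The plan is to verify the four assertions separately, building on Lemma~\ref{lem:geod rays of same Roller limit} and Lemma~\ref{lem:from Roller bdry to Gromov bdry} (which already ensure $f$ is well defined). For $\aut(X)$-equivariance I would argue that every $g \in \aut(X)$ permutes $\H(X)$ and carries halfspaces to halfspaces, so it acts on $\prod_{h \in \H(X)}\{h^-,h^+\}$ and preserves $\R X$, $X^{(0)}$, hence $\partial_\R X$; on the $1$-skeleton $g$ is an isometry, so it acts on $\partial_\infty X$. If $p$ is a geodesic ray converging to $\xi \in \partial_\R X$, then $gp$ is a geodesic ray converging to $g\xi$ in $\partial_\R X$ and representing $g\cdot f(\xi)$ in $\partial_\infty X$, whence $f(g\xi) = g\cdot f(\xi)$. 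For surjectivity: since $X$ is uniformly locally finite, its $1$-skeleton is a proper geodesic space, so $\partial_\infty X$ coincides with its geodesic boundary and every point is the endpoint of a combinatorial geodesic ray $p = (p_0,p_1,\ldots)$ from $o$ (see \cite{BH}); then $\xi := \lim_{n}\alpha_{p_n}$ exists in $\R X$ (each coordinate $\alpha_{p_n}(h)$ stabilizes because a geodesic crosses $h$ at most once), lies in $\partial_\R X$ since $p$ is unbounded, and satisfies $f(\xi) = [p]$.

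For uniform finiteness of the fibers, by Lemma~\ref{lem:from Roller bdry to Gromov bdry}~(2) we have, for $x,y \in \partial_\R X$, that $f(x)=f(y)$ iff $d_\G(x,y)\le\delta$; moreover, the construction in the proof of Lemma~\ref{lem:geod rays of same Roller limit}~(2) joins such $x$ and $y$ by a $\G$-path of length $|\H(x,y)| = d_\G(x,y) \le \delta$ all of whose vertices lie in $\partial_\R X$. Hence, fixing any $x_0 \in f^{-1}(\eta)$, the whole fiber $f^{-1}(\eta)$ is contained in the ball of radius $\delta$ around $x_0$ in the graph $\G|_{\partial_\R X}$. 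By uniform local finiteness, $D := \sup_{v\in X^{(0)}}|\lk_\G(v)| < \infty$, so Lemma~\ref{lem:geod rays of same Roller limit}~(4) shows every vertex of $\G|_{\partial_\R X}$ has degree at most $D-2$; a ball of radius $\delta$ in such a graph has at most some $R \in \NN$ vertices depending only on $D$ and $\delta$, so $|f^{-1}(\eta)| \le R$.

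For continuity, I would fix $\xi \in \partial_\R X$ and $n \in \NN$ and produce a basic neighbourhood of $\xi$ in $\partial_\R X$ mapped by $f$ into $U(o,f(\xi),n)$. Take a geodesic ray $p = (p_0 = o, p_1,\ldots)$ converging to $\xi$ and let $h_1,\ldots,h_{n+1}$ be the pairwise distinct hyperplanes dual to its first $n+1$ edges; each $h_\ell$ separates $o$ from $\xi$. Put $V = \{\eta \in \partial_\R X \mid \eta(h_\ell) = \xi(h_\ell)\ \text{for}\ 1\le\ell\le n+1\}$, a basic neighbourhood of $\xi$. If $\eta \in V$, then each $h_\ell$ separates $o$ from $\eta$, so any geodesic ray $q = (q_0 = o, q_1,\ldots)$ converging to $\eta$ (which exists by \cite[Proposition A.2]{Gen20}) eventually crosses all of $h_1,\ldots,h_{n+1}$. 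Using the identity $(x,y)_o^X = |\H(o,x)\cap\H(o,y)|$ valid for vertices of a CAT(0) cube complex (as $d_X$ is the $\ell^1$-metric), we get $(p_i,q_j)_o^X \ge n+1$ for all sufficiently large $i,j$, hence $(f(\xi),f(\eta))_o^X \ge n+1 > n$, i.e.\ $f(\eta) \in U(o,f(\xi),n)$. Thus $f$ is continuous.

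The hard part will be continuity: one must translate closeness in the (profinite-type) Roller topology — agreement on finitely many hyperplanes — into largeness of the Gromov product in $\partial_\infty X$, and this hinges on the cube-complex formula $(x,y)_o^X = |\H(o,x)\cap\H(o,y)|$ together with the fact that the convergence $q_j \to \eta$ in $\R X$ forces the geodesic ray $q$ to cross each prescribed hyperplane. The other three properties are short bookkeeping on top of the two quoted lemmas and properness of the $1$-skeleton.
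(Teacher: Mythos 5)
Your proposal is correct and follows essentially the same route as the paper's (very terse) proof: the fiber bound comes from Lemma \ref{lem:geod rays of same Roller limit} (4) combined with Lemma \ref{lem:from Roller bdry to Gromov bdry} (2), surjectivity from (uniform) local finiteness via properness of the $1$-skeleton, equivariance is routine, and your continuity argument via the identity $(x,y)_o^X=|\H(o,x)\cap\H(o,y)|$ is just a hyperplane-counting rendition of the median argument the paper points to in Lemma \ref{lem:geod rays of same Roller limit} (1). Your write-up fills in the details (e.g.\ that the $\G$-path in the fiber stays in $\partial_\R X$) correctly, so there is nothing to fix.
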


\begin{proof}
    The map $f$ is $R$-to-1 with some $R \in \NN$ by Lemma \ref{lem:geod rays of same Roller limit} (4) and Lemma \ref{lem:from Roller bdry to Gromov bdry} (2). Surjectivity follows from local finiteness of $X$. Continuity can be shown in a similar way to the proof of Lemma \ref{lem:geod rays of same Roller limit} (1). $\aut(X)$-equivariance is straightforward.
\end{proof}

Proposition \ref{prop:hyperfinite of Roller and Gromov} below verifies that Theorem \ref{thm:main} is a generalization of \cite{HSS20}, by deriving Corollary \ref{cor:hyperfinite Roller bdry of cubulated hyp gp}.

\begin{prop}\label{prop:hyperfinite of Roller and Gromov}
    Let $X$ be a uniformly locally finite hyperbolic CAT(0) cube complex. Suppose that a countable group $G$ acts on $X$ cubically. Then, $E_G^{\partial_\R X}$ is hyperfinite if and only if $E_G^{\partial_\infty X}$ is hyperfinite.
\end{prop}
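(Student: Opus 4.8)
The plan is to deduce this immediately from Lemma \ref{lem:unif. loc .fin. hyp.} together with Lemma \ref{lem:hyperfininte when finite to 1}. First I would set up the standard Borel structures. Since $X$ is uniformly locally finite and geodesic, it is proper, so its Gromov boundary $\partial_\infty X$ is compact metrizable, hence Polish and in particular a standard Borel space; likewise $\partial_\R X$ is Polish, being a $G_\delta$ subset of the compact metrizable space $\R X$, hence standard Borel. The cubical action $G \act X$ extends to homeomorphic actions on both $\partial_\R X$ (via the induced action on the system of halfspaces $\prod_{h\in\H(X)}\{h^-,h^+\}$) and $\partial_\infty X$ (via the topology of Proposition \ref{prop:gromov topo}), so $G$ acts on each of the two boundaries as Borel isomorphisms; in particular $E_G^{\partial_\R X}$ and $E_G^{\partial_\infty X}$ are CBERs.

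Next I would invoke Lemma \ref{lem:unif. loc .fin. hyp.}: there is a map $f \colon \partial_\R X \to \partial_\infty X$ which is $R$-to-$1$ for some $R \in \NN$, surjective, continuous, and $\aut(X)$-equivariant. Because the action $G \act X$ is cubical, it factors through $\aut(X)$, so $f$ is $G$-equivariant; being continuous between standard Borel spaces, $f$ is Borel; and being $R$-to-$1$, it is finite-to-$1$. Thus $f$ verifies every hypothesis of Lemma \ref{lem:hyperfininte when finite to 1}, with $\partial_\R X$ playing the role of $X$ and $\partial_\infty X$ playing the role of $Y$.

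Applying Lemma \ref{lem:hyperfininte when finite to 1} then yields at once that $E_G^{\partial_\R X}$ is hyperfinite if and only if $E_G^{\partial_\infty X}$ is hyperfinite, which is precisely the assertion. I do not expect a genuine obstacle: the substantive work has been front-loaded into Lemma \ref{lem:unif. loc .fin. hyp.} (the existence and finiteness-of-fibers of the boundary map $f$) and Lemma \ref{lem:hyperfininte when finite to 1} (transfer of hyperfiniteness across finite-to-$1$ $G$-equivariant Borel maps in both directions). The only points demanding care are the bookkeeping that both boundaries carry compatible standard Borel structures on which $G$ acts by Borel automorphisms, and that the $\aut(X)$-equivariant map $f$ supplied by Lemma \ref{lem:unif. loc .fin. hyp.} is in particular Borel and $G$-equivariant.
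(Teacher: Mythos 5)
Your proposal is correct and follows exactly the paper's route: the paper likewise deduces the proposition directly from Lemma \ref{lem:unif. loc .fin. hyp.} (the $R$-to-$1$, surjective, continuous, $\aut(X)$-equivariant map $f\colon \partial_\R X \to \partial_\infty X$) combined with Lemma \ref{lem:hyperfininte when finite to 1}. Your added bookkeeping about the standard Borel structures and the $G$-equivariance of $f$ is fine and consistent with what the paper leaves implicit.
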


\begin{proof}
    This follows from Lemma \ref{lem:hyperfininte when finite to 1} and Lemma \ref{lem:unif. loc .fin. hyp.}.
\end{proof}

\begin{cor}[{\cite[Theorem 1.1]{HSS20}}]\label{cor:hyperfinite Roller bdry of cubulated hyp gp}
    If a hyperbolic group $G$ acts properly and cocompactly on a CAT(0) cube complex X, then the orbit equivalence relation $E_G^{\partial_\infty X}$ induced by the action $G \act \partial_\infty X$ is hyperfinite.
\end{cor}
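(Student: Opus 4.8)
The plan is to obtain this as a short consequence of Corollary \ref{cor:cubulated hyp. gp} together with Proposition \ref{prop:hyperfinite of Roller and Gromov}, so the first step is to check that the hypotheses of the latter are met in this setting. Since $G$ acts properly and cocompactly on $X$, the quotient $X/G$ is a compact cube complex and so has only finitely many cubes; combined with properness this forces $X$ to be locally finite, and with only finitely many vertex-orbits it is in fact uniformly locally finite. Cocompactness also passes to the $1$-skeleton of $X$, so by the Milnor--\v{S}varc lemma the metric space $(X,d_X)$ (the $1$-skeleton with its $\ell^1$-metric) is quasi-isometric to $G$ equipped with a word metric; as $G$ is a hyperbolic group, $(X,d_X)$ is a hyperbolic geodesic metric space. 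Thus $X$ is a uniformly locally finite hyperbolic CAT(0) cube complex on which the countable group $G$ acts cubically.

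Next I would invoke Corollary \ref{cor:cubulated hyp. gp}: since $G$ is a hyperbolic group acting properly and cocompactly on the CAT(0) cube complex $X$, that corollary (which rests on Theorem \ref{thm:main} and on Agol's theorem \cite[Theorem 1.1]{Ian13} that such $G$ acts virtually specially) gives that $E_G^{\partial_\R X}$ is hyperfinite. Feeding this into Proposition \ref{prop:hyperfinite of Roller and Gromov}, applied to the uniformly locally finite hyperbolic CAT(0) cube complex $X$ with its cubical $G$-action, upgrades hyperfiniteness of $E_G^{\partial_\R X}$ to hyperfiniteness of $E_G^{\partial_\infty X}$, where $\partial_\infty X$ denotes the Gromov boundary of $(X,d_X)$.

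Finally I would reconcile conventions with \cite[Theorem 1.1]{HSS20}: that reference works with the CAT(0) metric on $X$ rather than with $d_X$, but cocompactness of $G \act X$ makes the CAT(0) metric and $d_X$ coarsely equivalent, so the identity map is a $G$-equivariant quasi-isometry and the two Gromov boundaries are $G$-equivariantly homeomorphic; hence hyperfiniteness of the orbit equivalence relation is insensitive to which of the two metrics one uses, and the statement as phrased matches \cite{HSS20}.

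I do not expect a genuine obstacle here: all the substantive input is already packaged in Theorem \ref{thm:main} (hence in Agol's theorem) and in Proposition \ref{prop:hyperfinite of Roller and Gromov}, and the only mildly delicate points are extracting ``uniformly locally finite'' and ``hyperbolic'' for $(X,d_X)$ from ``proper and cocompact'' and matching the boundary conventions.
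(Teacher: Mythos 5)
Your proposal is correct and follows essentially the same route as the paper, which proves this corollary by combining Corollary \ref{cor:cubulated hyp. gp} with Proposition \ref{prop:hyperfinite of Roller and Gromov}. The extra verifications you supply (uniform local finiteness and hyperbolicity of $(X,d_X)$ from properness and cocompactness, and the metric-convention check against \cite{HSS20}) are exactly the points the paper leaves implicit, and they are argued correctly.
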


\begin{proof}
    This follows from Corollary \ref{cor:cubulated hyp. gp} and Proposition \ref{prop:hyperfinite of Roller and Gromov}.
\end{proof}


\providecommand{\bysame}{\leavevmode\hbox to3em{\hrulefill}\thinspace}
\providecommand{\MR}{\relax\ifhmode\unskip\space\fi MR }
\providecommand{\MRhref}[2]{%
  \href{http://www.ams.org/mathscinet-getitem?mr=#1}{#2}
}
\providecommand{\href}[2]{#2}

\vspace{5mm}

\noindent  Department of Mathematics and Statistics, McGill University, Burnside Hall,

\noindent 805 Sherbrooke Street West, Montreal, QC, H3A 0B9, Canada.

\noindent E-mail: \emph{koichi.oyakawa@mail.mcgill.ca}

\end{document}